\numberwithin{equation}{section}
\newcommand{\C}[1]{\mathcal{#1}}
\newcommand{\B}[1]{\mathbb{#1}}
\newcommand{\Tor}{\text{\rm Tor}}
\newcommand{\Ext}{\text{\rm Ext}}
\newcommand{\Span}{\text{\rm Span}}
\newcommand{\OP}{\text{\bf OP}}
\newcommand{\PRO}{\text{\bf PRO}}
\newcommand{\Alg}{\text{{\bf Alg}}}
\newcommand{\alg}{\text{{\bf alg}}}
\newcommand{\Assoc}{\text{\rm Assoc}}
\newcommand{\Comm}{\text{\rm Comm}}
\newcommand{\Simp}{\text{\rm Simp}}
\newcommand{\Leib}{\text{\rm Leib}}
\newcommand{\Lie}{\text{\rm Lie}}
\newcommand{\Mag}{\text{\rm Mag}}
\newcommand{\Sym}{\text{\rm Sym}}
\newcommand{\Braid}{\text{\rm Braid}}
\newcommand{\brd}[2]{\chi^{#1}_{#2}}
\newtheorem{theorem}{Theorem}[section]
\newtheorem{proposition}[theorem]{Proposition}
\newtheorem{lemma}[theorem]{Lemma}
\theoremstyle{definition}
\newtheorem{defn}[theorem]{Definition}
\newtheorem{remark}[theorem]{Remark}
\title{The Leibniz PROP is a crossed presimplicial algebra}
\author{Murat Can Aşkaroğullari}
\email{askarogullari19@itu.edu.tr,mcaskar@gtu.edu.tr}
\address{Istanbul Technical University, Turkey}
\author{Atabey Kaygun}
\email{kaygun@itu.edu.tr}
\address{Istanbul Technical University, Turkey}
\begin{document}
\begin{abstract}
  We prove that the Leibniz PROP is isomorphic as $\Bbbk$-linear categories (not as monoidal categories) to the symmetric crossed presimplicial algebra $\Bbbk[(\Delta^+)^{op} \B{S}]$ where $\Delta^+$ is the skeletal category of finite well-ordered sets with surjections, but the distributive law between $(\Delta^+)^{op}$ and the symmetric groups $\B{S} = \bigsqcup_{n\geq 1} S_n$ is not the standard one.
\end{abstract}

\maketitle

\section*{Introduction}

Leibniz algebras are non-commutative analogues of Lie algebras. Like any other algebraic structure, there is an operad, or better yet a PRO or a PROP, that organizes all algebraic operations on that algebraic structure in a combinatorial and categorical framework. There are PRO(P)s for algebras of different types (magmatic, associative, commutative, Lie, Leibniz, Poisson, Jacobian, etc.). They come in different flavors (plain set, linear, piecewise linear, topological, homotopical, simplicial, differential graded, etc.), but are always considered as symmetric or braided monoidal categories.  Our key observation is that forgetting the monoidal structure of a PRO(P) yields a $\Bbbk$-linear category, and this reduction allows us to embed $\Bbbk[(\Delta^+)^{op}]$ into $\text{cat}\textbf{Leib}$. 


The specific parametrizing $\Bbbk$-linear categories we are going to consider in this paper are as follows:
\begin{enumerate}[(a)]
\item $\Mag$ for magmatic algebras, i.e., algebras with a binary operation with no other condition.
\item $\Simp$ for (not necessarily unital) associative algebras.
\item $\Sym$ for graded vector spaces with compatible actions of symmetric groups $\B{S} = \bigsqcup_{n\geq 1} S_{n}$.
\item $\Braid$ for graded vector spaces with compatible actions of Artin braid groups $\B{B} = \bigsqcup_{n\geq 1} B_{n}$.
\item $\Leib$ and $\Leib^{op}$ for left and right Leibniz algebras.
\end{enumerate}
We define each of these categories by specifying explicit generators and relations, and we provide an explicit description of the distributive laws between the relevant categories on these generators. The proofs we give in this paper are highly algebro-combinatorial in nature, and in theory, should yield themselves for machine verification as in~\cite{BremnerMarkl19, Bremner_2020}, but we meticulously check them by hand aided by string diagrams.

Our main result is Theorem~\ref{thm:MainResult} in which we show that the parametrizing $\Bbbk$-linear category $\Leib$ for Leibniz algebras is isomorphic to a twisted product of $\Simp$ (which is isomorphic to the categorical algebra of the opposite of $\Delta^+$, the subcategory of epimorphisms in the simplicial category $\Delta$) and $\B{S}$. The isomorphism, however, is just an isomorphism of $\Bbbk$-linear categories, not of monoidal $\Bbbk$-linear categories. Moreover, the twisted product is determined by a distributive law between $(\Delta^+)^{op}$ and $\B{S}$, but not the standard one. Such objects defined by the distributive laws between the (pre)simplicial category and groups are usually called $\emph{crossed simplicial groups}$~\cite{crossedsimplicial}. Since we are defining everything in $\Bbbk$-linear categories we adopt the terminology crossed presimplicial algebra instead of crossed presimplicial group.

\subsection*{Plan of the article} In Section~\ref{sect:CombinatorialOperations}, we define all of the $\Bbbk$-linear categories we use by explicit generators and relations. Section~\ref{sect:DistributiveLaws} deals with the distributive laws between these $\Bbbk$-linear categories in all the different variations that we are going to need. In Section~\ref{sect:LeibnizAlgebras}, we define the $\Bbbk$-linear category for Leibniz algebras, and finally give our main result Theorem~\ref{thm:MainResult}. In Section~\ref{subsect:homologicalramifications} we discuss homological ramifications of our main result.

\subsection*{Notation and conventions} We use $\cup$ for ordinary union of sets while $\sqcup$ will denote the disjoint union. We use a base field $\Bbbk$ with no assumption on the characteristic. All unadorned tensor products are over $\Bbbk$. For each natural number $n\geq 1$, we use $S_n$ to denote the symmetric group on $n$-letters, $B_n$ to denote the Artin braid group on $n$-strands.  We use $\Delta$ for the skeletal category of finite well-ordered sets with nondecreasing set maps. The subcategories of injective and surjective maps are respectively denoted by $\Delta^-$ and $\Delta^+$. For a small $\Bbbk$-linear category $\C{C}$, the categorical algebra of $\C{C}$ is the direct sum of morphisms of $\C{C}$ where the product is given by the composition of the morphisms whenever they are composable and $0$ otherwise.

\section{Free $\B{K}$-algebras} \label{sect:CombinatorialOperations}

\subsection{$\B{K}$-bimodules and $\B{K}$-algebras}

We denote the smallest $\Bbbk$-linear PRO  by $\B{K}$ whose non-zero morphisms consist of the constant multiples of identities on each object ~\cite{adams1978infinite} ~\cite{boardman1973homotopy}. With this definition, one can see that every $\Bbbk$-linear PRO contains $\B{K}$ as a subcategory. There exists an inclusion functor $1_{\C{C}} \colon  {\B{K}} \to \C{C}$ for every $\Bbbk$-linear PRO $\C{C}$. Note that we are only interested in the underlying $\Bbbk$-linear categories of PROs omitting the monoidal structure.

One can also describe $\B{K}$ as the locally unital algebra ${\B{K}} = \Bbbk^{\oplus \B{N}}$ spanned algebraically by countably many vectors $1_n$ for every $n\in\B{N}$ subject to the condition that $1_n\cdot 1_m = \delta_{nm}$ for every $n,m\in\B{N}$.  With this definition at hand, one can now define a $\B{K}$-module as a countable collection $(V_n)_{n\in\B{N}}$ of vector spaces indexed by $\B{N}$. Then a $\B{K}$-bimodule $(V_{n,m})_{n,m\in\B{N}}$ is a collection of vector spaces doubly indexed by $\B{N}$.  A $\B{K}$-(bi)module $V$ is called \emph{locally finite} if for every $x\in V$ there are only finitely many $n,m\in\B{N}$ such that $1_n\cdot x$ and $x\cdot 1_m$ are non-zero.  A locally finite $x\in V$ is called \emph{faithful} if $\sum_{n\in\B{N}} 1_n\cdot x$ is well-defined and is equal to $x$.  A similar condition holds for bimodules.  This condition is equivalent to the fact that $V = \bigoplus_{n\in\B{N}} 1_n\cdot V$ for a module and $V = \bigoplus_{n,m\in\B{N}} 1_n\cdot V\cdot 1_m$ for a bimodule.  Note that the fact that $\B{K}$-bimodule $V = (V_{n,m})_{n,m\in\B{N}}$ is faithful is equivalent to the fact that 
\[ {\B{K}}\otimes_{\B{K}}V \cong V \cong V\otimes_{\B{K}}{\B{K}} \]

\begin{remark}
Technically, we should call $\B{K}$-(bi)modules as $\B{K}$-algebras since $\B{K}$ is a PRO. However, later on, we will need to work with monoid objects in the category of $\B{K}$-bimodules, and we would have had to refer to what we call $\B{K}$-algebras  ``algebras in the category of $\B{K}$-algebras'' which is needlessly confusing.
\end{remark}

\begin{proposition}
  The category of faithful $\B{K}$-bimodules is strictly monoidal with $\B{K}$ being the unit object and a product defined on the objects as
  \[V\otimes_{\B{K}} W = \left(\bigoplus_{m\in\B{N}} V_{n,m}\otimes W_{m,\ell}\right)_{n,\ell\in\B{N}} \]
\end{proposition}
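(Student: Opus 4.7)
The plan is to verify the three ingredients of a strict monoidal structure: (i) the tensor product $-\otimes_{\B{K}}-$ produces a faithful $\B{K}$-bimodule from faithful inputs, (ii) it is a bifunctor on the category of faithful $\B{K}$-bimodules (and hence extends to morphisms), and (iii) it is strictly associative and strictly unital with unit $\B{K}$. Because $\B{K}$ is locally unital with orthogonal idempotents $\{1_n\}$, the whole calculation reduces to bookkeeping on the bigraded components and then invoking the ordinary associativity/unit of $\otimes = \otimes_{\Bbbk}$ on vector spaces.

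For (i), I first unpack the bimodule structure: the left $\B{K}$-action on $V\otimes_{\B{K}}W$ uses only the left action on $V$ and the right action uses only the right action on $W$, so $1_n\cdot(V\otimes_{\B{K}}W)\cdot 1_\ell = \bigoplus_{m\in\B{N}} V_{n,m}\otimes W_{m,\ell}$. Summing over $n,\ell\in\B{N}$ reproduces the whole space by the faithfulness of $V$ and $W$ individually, which gives the required decomposition $V\otimes_{\B{K}} W = \bigoplus_{n,\ell} 1_n\cdot(V\otimes_{\B{K}} W)\cdot 1_\ell$. For (ii) I set $(f\otimes_{\B{K}} g)(v\otimes w) = f(v)\otimes g(w)$; bilinearity and preservation of the bimodule structure are immediate, and functoriality in each variable is straightforward since composition of $\B{K}$-bimodule maps respects the bigraded decomposition.

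For strict associativity I compute the $(n,\ell)$-component on both sides:
\[ ((U\otimes_{\B{K}} V)\otimes_{\B{K}} W)_{n,\ell} = \bigoplus_{p} \Bigl(\bigoplus_{m} U_{n,m}\otimes V_{m,p}\Bigr)\otimes W_{p,\ell}, \qquad (U\otimes_{\B{K}}(V\otimes_{\B{K}} W))_{n,\ell} = \bigoplus_{m} U_{n,m}\otimes\Bigl(\bigoplus_{p} V_{m,p}\otimes W_{p,\ell}\Bigr). \]
Under the canonical identification of $(A\otimes B)\otimes C$ with $A\otimes(B\otimes C)$ in $\Bbbk$-vector spaces and the canonical distribution of $\otimes$ over $\oplus$, both sides are literally $\bigoplus_{m,p} U_{n,m}\otimes V_{m,p}\otimes W_{p,\ell}$, so the associator reduces to the identity. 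Conceptually, a faithful $\B{K}$-bimodule is an $\B{N}\times\B{N}$-indexed ``matrix'' of $\Bbbk$-vector spaces, and $\otimes_{\B{K}}$ is matrix multiplication, which is strictly associative.

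For the unit laws, I compute $(\B{K}\otimes_{\B{K}}V)_{n,\ell} = \bigoplus_{m} \B{K}_{n,m}\otimes V_{m,\ell} = \Bbbk\cdot 1_n\otimes V_{n,\ell}$, identified with $V_{n,\ell}$ via $\lambda 1_n\otimes v \mapsto \lambda v$; and symmetrically on the right. Under the same identifications used for $\otimes$ in vector spaces, these become equalities rather than merely natural isomorphisms, so the triangle identity is automatic, and the pentagon is automatic from strictness of associativity. The only mild point to highlight, and the main (small) obstacle, is the status of ``strict'': what is literally strict is the indexed matrix-multiplication description, and the displayed formula is strict precisely after tacitly using the canonical associator and unitors of $(\mathrm{Vect}_{\Bbbk},\otimes)$. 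Once this convention is fixed, the coherence axioms hold on the nose.
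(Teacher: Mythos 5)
Your argument is correct, and it is both more detailed than and slightly different in mechanism from the paper's. The paper's proof consists of a single observation: choosing $\Bbbk$-bases $B_{n,m}$ of $V_{n,m}$ and $C_{m,\ell}$ of $W_{m,\ell}$, the component $(V\otimes_{\B{K}}W)_{n,\ell}$ has basis $\bigsqcup_{m}B_{n,m}\times C_{m,\ell}$, hence the product of faithful bimodules is again a faithful bimodule; the bifunctoriality, strict associativity, and unit laws are left implicit. You prove the same closure statement basis-freely, via the idempotent decomposition $1_n\cdot(V\otimes_{\B{K}}W)\cdot 1_\ell=\bigoplus_m V_{n,m}\otimes W_{m,\ell}$, and then actually verify the monoidal axioms: the ``matrix of vector spaces / matrix multiplication'' picture makes strict associativity a componentwise identification $\bigoplus_{m,p}U_{n,m}\otimes V_{m,p}\otimes W_{p,\ell}$ on both sides, and the unit law reduces to $\Bbbk 1_n\otimes V_{n,\ell}\cong V_{n,\ell}$. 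Your closing caveat is also the honest one: strictness here is strictness of the bigraded (matrix) description, i.e.\ it holds on the nose only after the canonical associator and unitors of $(\mathrm{Vect}_\Bbbk,\otimes)$ are suppressed, exactly as the paper tacitly does. What the paper's basis argument buys is brevity and an explicit description of a basis of the product (which it reuses later for straightening arguments); what your argument buys is independence from choices of bases and an actual verification of the coherence claims in the statement.
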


\begin{proof}
  Assume the faithful $\B{K}$-bimodules $V$ and $W$ have bases $B_{n,m}$ and $C_{n,m}$, respectively. Then the bigraded vector space $(V\otimes_{\B{K}}W)_{n,\ell}$ has the basis $\bigsqcup_{m\in\B{N}} B_{n,m}\times C_{m,\ell}$, and thus, it is a faithful $\B{K}$-bimodule.
\end{proof}

A faithful $\B{K}$-bimodule together with a unital associative operation $\mu\colon V\otimes_{\B{K}} V\to V$, $1_V \colon \B{K} \to V$ making the following diagrams commutative is called a $\B{K}$-algebra.
\[\xymatrix{
	V\otimes_{\B{K}}V\otimes_{\B{K}}V
	\ar[rr]^{\mu\otimes V}\ar[d]_{V\otimes\mu}
	& & V\otimes_{\B{K}}V \ar[d]^{\mu}\\
	V\otimes_{\B{K}}V \ar[rr]_{\mu} 
	& & V
}\qquad
\xymatrix{ V \ar[rr]^{1_V\otimes V} \ar@{=}[rrd] 
	& & V\otimes_{\B{K}}V \ar[d]_{\mu}  
	& & \ar[ll]_{V\otimes 1_V} \ar@{=}[lld] V\\
	& & V & & 
}\]
If $V$ is a $\B{K}$-algebra such that each $V_{n,m}$ is finite dimensional, we will call $V$ a locally finite dimensional $\B{K}$-algebra. We will use $\Alg(\B{K})$ and $\alg(\B{K})$ to denote the category of $\B{K}$-algebras and locally finite dimensional $\B{K}$-algebras, respectively.

\subsection{Free $\B{K}$-algebras}

The free associative $\B{K}$-algebra $T(V)$ from a faithful $\B{K}$-bimodule $V$ is defined as
\[ T(V):={\B{K}} \oplus \bigoplus_{n \geq 1} \overbrace{V \otimes_{\B{K}} \cdots \otimes_{\B{K}} V}^{n \text {-times }}\]
where the multiplication on $T(V)$ is given by concatenation.  Note that any $\B{K}$-algebra can be written as a quotient of a free associative $\B{K}$-algebra of a faithful $\B{K}$-bimodule.

Consider the faithful $\B{K}$-bimodule  $\partial=\bigoplus_{n \geq 0}{}^{n+1}\partial^{n}$ where ${}^{n+1}\partial^{n} = \Span_{\Bbbk}(\partial_j^n \mid 0\leq j \leq n)$ and the rest of the bigraded parts are assumed to be zero. This definition implies that $1_{n+1} \cdot \partial_j^n=\partial_j^n \cdot 1_n= \partial_j^n $ and the rest of the left or right actions of $\B{K}$ yield zero. 

Next, consider the faithful $\B{K}$-bimodule  $\chi=\bigoplus_{n \geq 1}{}^n\chi^{n}$ where ${}^n\chi^{n}= \Span_{\Bbbk}(\chi_i^n \mid 0\leq i \leq n-1)$  and rest of the bigraded parts are assumed to be zero. This definition implies that $1_{n} \cdot \chi_j^n=\chi_j^n \cdot 1_n= \chi_j^n $ and the rest of the left or right actions of $\B{K}$ yield zero.

In the sections below, we are going to consider the free algebras $T(\partial)$ and $T(\chi)$ and various quotients of them.

\subsection{$\Mag$}
 We will use $\langle a_i\mid i\in I \rangle$ to denote the bilateral ideal generated by an indexed family of elements $a_i$ for $i\in I$.

Now, after \cite[Section 1.2.5]{loday:triples} and \cite{inasaridze:pseudosimplicial, Frabetti_2001}, we define \emph{the magmatic} $\B{K}$-algebra $\Mag $ as the quotient of $T(\partial)$ by the ideal $I_{\Mag }$ where
\begin{equation}\label{eq:magma}
  I_{\Mag }=\langle\partial_i^{n+1}\partial_j^n-\partial_{j+1}^{n+1}\partial_i^n \mid 0\leq i <j \leq n\text{ and } n\geq 0\rangle
\end{equation}
	
This is called a \emph{pseudo-simplicial structure} in \cite{inasaridze:pseudosimplicial}, and an \emph{almost-simplicial structure} in~\cite{Frabetti_2001}. This $\B{K}$-algebra models a $\Bbbk$-linear vector space equipped with a binary operation and no other condition.

The generator $\partial_j^n$ can be depicted as follows:
\[ \begin{tikzpicture}[xscale=0.5, yscale=0.5]
    
    \begin{knot}[
      clip width=5,
      clip radius=8pt,
      ]
      
      \node at (-1, -0.5)  {\scalebox{0.5}{$0$}} ; 
      \node at (0 , -0.5) {\scalebox{0.5}{$1$}} ; 
      \node at (3 , -0.5) {\scalebox{0.5}{$j$}} ;
      \node at (6 , -0.5) {\scalebox{0.5}{$n-1$}} ;
      \node at (7 , -0.5) {\scalebox{0.5}{$n$}} ; 
      \node at (1 , 0.6) {\scalebox{1.2}{$\cdots$}};
      \node at (-1, 1.7 )  {\scalebox{0.5}{$0$}} ; 
      \node at (0 , 1.7) {\scalebox{0.5}{$1$}} ; 
      \node at (2 , 1.7) {\scalebox{0.5}{$j$}} ; 
      \node at (4 , 1.7) {\scalebox{0.5}{$j+1$}} ; 
      \node at (6 , 1.7) {\scalebox{0.5}{$n$}} ;
      \node at (7 , 1.7) {\scalebox{0.5}{$n+1$}} ;
      \node at (5 , 0.6) {\scalebox{1.2}{$\cdots$}};
      
      \strand[thick] (-1, 0.0) to (-1, 0.27) to (-1, 0.93) to (-1, 1.2) ;
      
      \strand[thick] (0, 0.0) to (0, 0.27) to (0, 0.93) to (0, 1.2) ;
      
      \strand[thick] (3, 0.27) to (2, 0.93) to (2, 1.2) ;
      
      \strand[thick] (3, 0.0) to (3, 0.27) to (4, 0.93) to (4, 1.2) ;
      
      \strand[thick] (6, 0.0) to (6, 0.27) to (6, 0.93) to (6, 1.2) ;
      
      \strand[thick] (7, 0.0) to (7, 0.27) to (7, 0.93) to (7, 1.2) ;
      
    \end{knot} 
    
  \end{tikzpicture}
\]

The following string diagram represents the element with the smallest indices $\partial_{0}^{2}\partial_1^1-\partial_2^{2}\partial_0^1$ in the ideal $ I_{\Mag}$.
\begin{equation}
  \begin{tabular}{c c}
    \begin{tikzpicture}[xscale=0.5, yscale=0.5]		
      \begin{knot}[
        clip width=5,
        clip radius=8pt,
        ]
        
        \node at (-1, -0.5) {\scalebox{0.5}{$0$}} ; 
        \node at (2 , -0.5) {\scalebox{0.5}{$1$}} ; 
        \node at (-2, 2.9 ) {\scalebox{0.5}{$0$}} ; 
        \node at (0 , 2.9) {\scalebox{0.5}{$1$}} ; 
        \node at (1 , 2.9) {\scalebox{0.5}{$2$}} ; 
        \node at (3 , 2.9) {\scalebox{0.5}{$3$}} ; 
        \node at (5 , 1.3) {\scalebox{1}{$-$}} ; 
        
        \strand[thick] (-1, 1.47) to (-2, 2.13) to (-2, 2.4) ;         
        \strand[thick] (-1, 0.0) to (-1, 0.27) to (-1, 0.93) to (-1, 1.2) to (-1, 1.47) to (0, 2.13) to (0, 2.4) ;         
        \strand[thick] (2, 0.27) to (1, 0.93) to (1, 1.2) to (1, 1.47) to (1, 2.13) to (1, 2.4) ;	
        \strand[thick] (2, 0.0) to (2, 0.27) to (3, 0.93) to (3, 1.2) to (3, 1.47) to (3, 2.13) to (3, 2.4) ;			
      \end{knot} 		
    \end{tikzpicture}
    & \begin{tikzpicture}[xscale=0.5, yscale=0.5]
      \begin{knot}[
        clip width=5,
        clip radius=8pt,
        ]
        
        \node at (-1, -0.5)  {\scalebox{0.5}{$0$}} ; 
        \node at (2 , -0.5) {\scalebox{0.5}{$1$}} ; 
        \node at (-2, 2.9 )  {\scalebox{0.5}{$0$}} ; 
        \node at (0 , 2.9) {\scalebox{0.5}{$1$}} ; 
        \node at (1 , 2.9) {\scalebox{0.5}{$2$}} ; 
        \node at (3 , 2.9) {\scalebox{0.5}{$3$}} ; 
        \strand[thick] (-1, 0.27) to (-2, 0.93) to (-2, 1.2) to (-2, 1.47) to (-2, 2.13) to (-2, 2.4) ;
        
        \strand[thick] (-1, 0.0) to (-1, 0.27) to (0, 0.93) to (0, 1.2) to (0, 1.47) to (0, 2.13) to (0, 2.4) ;
        
        \strand[thick] (2, 1.47) to (1, 2.13) to (1, 2.4) ;
        
        \strand[thick] (2, 0.0) to (2, 0.27) to (2, 0.93) to (2, 1.2) to (2, 1.47) to (3, 2.13) to (3, 2.4) ;
      \end{knot} 
    \end{tikzpicture} 
  \end{tabular}
\end{equation}

\begin{proposition}\label{prop:MagStraighten}
  The magmatic $\B{K}$-algebra $\Mag$ has a $\Bbbk$-basis of trivial monomials $1_n$, and non-trivial monomials of the form $\partial^{m}_{i_{m}}\cdots\partial^n_{i_n}$ with $m\geq n$ and $i_{m}\geq \cdots\geq i_n$ with $0\leq i_j\leq j$ for all $j=n,\ldots,m$. 
\end{proposition}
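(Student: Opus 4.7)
The plan is to prove spanning and linear independence separately.

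For spanning, I would read the defining relation as a rewriting rule directed from left to right: whenever a sub-word $\partial^{k+1}_{i}\partial^{k}_{j}$ with $i<j$ appears in a monomial, replace it by $\partial^{k+1}_{j+1}\partial^{k}_{i}$. Composability with the $\B{K}$-bimodule structure forces every non-zero monomial in $T(\partial)$ to have the shape $\partial^{m}_{i_{m}}\partial^{m-1}_{i_{m-1}}\cdots\partial^{n}_{i_{n}}$ with consecutive upper indices and $0\leq i_{k}\leq k$, so the rewriting keeps us inside this family. To establish termination I would assign to each such word the tuple $(i_{m},i_{m-1},\ldots,i_{n})$ and use lexicographic comparison reading the highest index first. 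A single application of the rule changes a sub-tuple $(\ldots,i,j,\ldots)$ into $(\ldots,j+1,i,\ldots)$, and the leftmost affected entry passes from $i$ to $j+1>i$, so the weight strictly increases. Since each entry is bounded by its position, only finitely many weights are available, and the rewriting must halt. A terminal word satisfies $i_{m}\geq i_{m-1}\geq\cdots\geq i_{n}$, which is the claimed normal form, so the listed monomials span $\Mag$.

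For linear independence, I would appeal to an external model. The $\B{K}$-algebra $\Mag$ is designed to encode a $\Bbbk$-linear space equipped with a bare binary operation, whose free incarnation is the magmatic operad, whose bidegree components are indexed by planar binary trees and have Catalan-number dimensions. A direct enumeration shows that the number of normal-form monomials in each bidegree matches the corresponding Catalan number. I would then realize $\Mag$ on the free magmatic algebra on one generator by sending $\partial^{n}_{j}$ to the operation that grafts a new cherry at the $j$-th leaf, verify that this respects the defining relation, and check that distinct normal-form monomials produce distinct planar binary trees. Any linear relation among normal-form monomials in $\Mag$ would then pull back to a linear relation among distinct trees, which is impossible.

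The main obstacle will be this linear independence step. A completely self-contained alternative is to refine the termination argument into a confluence argument: the only critical pairs come from triples $\partial^{k+2}_{i}\partial^{k+1}_{j}\partial^{k}_{\ell}$ in which both adjacent pairs are out of order, and each such overlap closes by a short diamond chase applying the rule a few times on each side. Once local confluence is established, Newman's lemma upgrades termination to uniqueness of normal forms, which is equivalent to the desired linear independence and bypasses the need for any external model.
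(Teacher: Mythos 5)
Your proposal is correct, and its spanning half is exactly the paper's argument: the paper also straightens by replacing a sub-word $\partial^{k+1}_{i}\partial^{k}_{j}$ with $i<j$ by $\partial^{k+1}_{j+1}\partial^{k}_{i}$ until the subscripts are non-increasing. Where you genuinely go beyond the paper is in finishing the basis claim: the paper treats linear independence as immediate from the fact that the generators of $I_{\Mag}$ are differences of monomials of equal length (so $\Mag$ is spanned by monomial classes, each containing a straightened representative), and it does not discuss termination or uniqueness of the straightened form, both of which you supply. Your termination order is valid (each rewrite raises the leftmost affected subscript from $i$ to $j+1$, and all entries are bounded), and your confluence alternative is the cleaner of your two completions: the only ambiguities are overlaps $\partial^{k+2}_{i}\partial^{k+1}_{j}\partial^{k}_{\ell}$ with $i<j<\ell$, and both reduction orders terminate at $\partial^{k+2}_{\ell+2}\partial^{k+1}_{j+1}\partial^{k}_{i}$, so Newman's lemma (or Bergman's diamond lemma, which is what turns unique normal forms into a $\Bbbk$-basis of a quotient by pure-difference binomials) closes the argument with no external input. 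The tree model in your first option also works, since letting $\partial^{n}_{j}$ split the $j$-th leaf into a cherry respects the defining relation and separates normal forms, but note two caveats: in bidegree $(m+1,n)$ the normal forms correspond to planar forests of $n$ binary trees with $m+1$ leaves, counted by ballot-type numbers rather than a single Catalan number (only the operadic, one-output components are Catalan), and the injectivity of the normal-form-to-forest assignment is itself the nontrivial step, roughly as much work as the confluence check. Either completion yields a proof that is strictly more detailed than the paper's own, at the cost of the bookkeeping the paper elects to leave implicit.
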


\begin{proof}
  It is clear that $\Mag$ has a basis consisting of monomials in $\partial^\ell_i$'s with $0\leq i\leq \ell$. Since the defining identities in $\Mag$ are difference of two monomials of the same length, we can write a preferred basis by replacing certain submonomials (of length 2) with other submonomials (of length 2). We will refer to the operation as \emph{to straighten} from now on.  Based on the relations in $I_{\Mag }$, if $i_{j+1}<i_{j}$ occurs in a monomial $\partial^{n+\ell}_{i_{n+\ell}}\cdots\partial^n_{i_n}$ we can swap these indices using the identity $\partial_{i_{j+1}}^{j+1}\partial_{i_{j}}^{j} = \partial_{i_{j}+1}^{j+1}\partial_{i_{j+1}}^{j}$ in $\Mag$. In other words, we can write a basis of monomials in which subscripts are non-increasing $i_{n+\ell}\geq \cdots \geq i_n$.
\end{proof}

\begin{remark}\label{rk:PreferredBasis}
  Consider the normalized basis of $\Mag$ in terms of the monomials of the form 
  \begin{equation}\label{eq:MagBasis}
    \partial^m_{j_m}\cdots \partial^n_{j_n} \text{ with $j_m\geq \cdots\geq j_n$ and $0\leq j_u\leq u$ for all $u=n,\ldots,m$.}
  \end{equation}
  If we let $n=0$ then the indexing sequences of integers we used above are called \emph{parking functions}~\cite[Chapter 13]{Bona:Handbook}. We consider the cases where $m\geq n-1$. The case $m=n-1$ corresponds to the case where we have the trivial monomial $1_n$ from $\Mag$, and the case $m=n$ corresponds to the case where we have only one term $\partial^n_j$. In other words $m-n+1$ is the length of words in $\partial^n_j$'s.
\end{remark}

\subsection{$\Simp$}

We define the \emph{presimplicial} $\B{K}$-algebra $\Simp$ as the quotient $T(\partial) / I_{\Simp } $ where
\begin{equation}\label{eq:assoc}
  I_{\Simp }   
  = \langle \partial_i^{n+1} \partial_j^n-\partial_{j+1}^{n+1} \partial_i^n \mid 0 \leq i\leq j\leq n, \text{ and } n\geq 0\rangle  
\end{equation}
Note that $\Simp$ can also be defined as a quotient of the magmatic $\B{K}$-algebra $\Mag$ by the bilateral ideal $\langle \partial_i^{n+1} \partial_i^n-\partial_{i+1}^{n+1} \partial_i^n \mid 0 \leq i \leq n \rangle$. The additional constraint  models associativity on magmatic algebras. The following string diagram represents the element of the added constraints with the smallest indices $\partial_{0}^{1}\partial_0^0-\partial_1^{1}\partial_0^0$.

\begin{center}
	\begin{tabular}{c c}

	\begin{tikzpicture}[xscale=0.5, yscale=0.5]
		
		\begin{knot}[
			clip width=5,
			clip radius=8pt,
			]
			
			\node at (0, -0.5)  {\scalebox{0.5}{$0$}} ; 
			\node at (-2, 2.9 )  {\scalebox{0.5}{$0$}} ; 
			\node at (0 , 2.9) {\scalebox{0.5}{$1$}} ; 
			\node at (1 , 2.9) {\scalebox{0.5}{$2$}} ; 
			\node at (3 , 1.2) {\scalebox{1.5}{$-$}} ; 
			\strand[thick] (-1, 1.47) to (-2, 2.13) to (-2, 2.4) ;
			
			\strand[thick] (0, 0.27) to (-1, 0.93) to (-1, 1.2) to (-1, 1.47) to (0, 2.13) to (0, 2.4) ;
			
			\strand[thick] (0, 0.0) to (0, 0.27) to (1, 0.93) to (1, 1.2) to (1, 1.47) to (1, 2.13) to (1, 2.4) ;
			
		\end{knot} 
		
	\end{tikzpicture} 
&
	
	\begin{tikzpicture}[xscale=0.5, yscale=0.5]
		
		\begin{knot}[
			clip width=5,
			clip radius=8pt,
			]
			
			\node at (0, -0.5)  {\scalebox{0.5}{$0$}} ; 
			\node at (-1, 2.9 )  {\scalebox{0.5}{$0$}} ; 
			\node at (0 , 2.9) {\scalebox{0.5}{$1$}} ; 
			\node at (2 , 2.9) {\scalebox{0.5}{$2$}} ; 
			\strand[thick] (0, 0.27) to (-1, 0.93) to (-1, 1.2) to (-1, 1.47) to (-1, 2.13) to (-1, 2.4) ;
			
			\strand[thick] (1, 1.47) to (0, 2.13) to (0, 2.4) ;
			
			\strand[thick] (0, 0.0) to (0, 0.27) to (1, 0.93) to (1, 1.2) to (1, 1.47) to (2, 2.13) to (2, 2.4) ;
			
		\end{knot} 
		
	\end{tikzpicture} 
		\end{tabular}
\end{center}

\begin{proposition}\label{prop:SimpStraighten}
  The presimplicial $\B{K}$-algebra $\Simp$ has a $\Bbbk$-basis consisting of monomials of the form 
  \begin{equation}\label{eq:BasisSimp}
    \partial^{m}_{i_m}\cdots\partial^n_{i_n} \text{ with $i_m>\cdots>i_n$ and $0\leq i_j\leq j$}
  \end{equation}
  for every $j=n,\ldots,m$ where $m\geq n-1$.
\end{proposition}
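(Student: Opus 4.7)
The plan is to obtain the claimed spanning set from the Mag-basis of Proposition~\ref{prop:MagStraighten} by iterating the new relation in $I_\Simp$, and then to deduce linear independence by identifying $\Simp$ with the categorical algebra $\Bbbk[(\Delta^+)^{op}]$.

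Given a Mag-basis monomial $\partial^m_{i_m}\cdots\partial^n_{i_n}$ with $i_m\geq\cdots\geq i_n$ and $i_k\leq k$, I would locate the \emph{topmost} consecutive equality, i.e., the largest $j$ with $i_{j+1}=i_j=:c$, and apply the Simp relation $\partial^{j+1}_c\partial^j_c=\partial^{j+1}_{c+1}\partial^j_c$ to bump $i_{j+1}$ to $c+1$. Since $c=i_j\leq j$ gives $c+1\leq j+1$, the bound $i_k\leq k$ is preserved. Since $(j+1,j)$ was topmost, $i_{j+2}>c$ originally, hence $i_{j+2}\geq c+1$, which is the new value of $i_{j+1}$, so the sequence remains non-increasing. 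Each bump strictly increases $\sum_{k=n}^m i_k$, a quantity bounded above by $\sum_{k=n}^m k$, so the process terminates at a sequence without any consecutive equality---exactly the strictly decreasing form of \eqref{eq:BasisSimp}. This establishes spanning.

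For linear independence, I would send $\partial^n_j$ to the codegeneracy $\sigma^n_j\colon[n+1]\twoheadrightarrow[n]$ in $\Delta$, extended $\Bbbk$-linearly and with composition reversed, to obtain a $\Bbbk$-linear functor $\Simp\to\Bbbk[(\Delta^+)^{op}]$. The generators of $I_\Simp$ correspond precisely to the standard codegeneracy identities $\sigma_j\sigma_i=\sigma_i\sigma_{j+1}$ for $i\leq j$ in $\Delta$, so the assignment is well-defined and visibly surjective on morphisms. The classical normal form for surjections in $\Delta^+$---every surjection $[m+1]\twoheadrightarrow[n+1]$ admits a unique factorization as a composition of codegeneracies whose indices $0\leq i_n<i_{n+1}<\cdots<i_m$ (with $i_k\leq k$) are strictly increasing---transports under this identification to linear independence of the monomials in \eqref{eq:BasisSimp}.

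The main obstacle is the bookkeeping in the straightening. Bumping at a non-topmost equality could introduce an inversion with the pair above, requiring Mag-swaps and potentially producing an infinite loop of interleaved bumps and swaps. Restricting bumps to the topmost equality keeps every intermediate sequence in Mag-basis form and turns each bump into a strict monotone increase of $\sum i_k$, so termination is automatic. Once the identification $\Simp\cong\Bbbk[(\Delta^+)^{op}]$ is in place, linear independence is inherited from the classical simplicial normal form.
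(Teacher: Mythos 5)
Your proposal is correct, and its first half is the paper's own argument: the paper also starts from the $\Mag$-basis of Proposition~\ref{prop:MagStraighten} and straightens by replacing $\partial^{j+1}_{\ell}\partial^j_{\ell}$ with $\partial^{j+1}_{\ell+1}\partial^j_{\ell}$, asserting that the strictly decreasing form ``easily follows''; your topmost-equality rule and the termination argument via the strictly increasing, bounded quantity $\sum_k i_k$ supply exactly the bookkeeping the paper leaves implicit, and your check that intermediate words stay in $\Mag$-normal form is the right way to avoid the bump/swap interleaving worry. Where you genuinely diverge is linear independence: the paper treats the straightened monomials as a basis by the same ``replacement of submonomials'' reasoning it used for $\Mag$ (an implicit rewriting-system argument), and only afterwards, in Proposition~\ref{prop:simpdelta}, identifies $\Simp$ with the categorical algebra of $(\Delta^+)^{op}$ \emph{using} this basis; you instead build the map $\Simp\to\Bbbk[(\Delta^+)^{op}]$ directly from the codegeneracy relations and import the classical unique factorization of surjections to get independence. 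This is not circular, since you rely only on the standard normal form in $\Delta^+$ and not on the paper's Proposition~\ref{prop:simpdelta}, and it buys you a fully rigorous independence proof together with Proposition~\ref{prop:simpdelta} essentially for free; the cost is that the proposition is no longer self-contained within the generators-and-relations formalism, whereas the paper's version keeps the combinatorics internal and then reads off the equivalence with $(\Delta^+)^{op}$ as a separate, short step. Only cosmetic caveats remain: keep the paper's grading conventions straight ($\partial^n_j$ corresponds to $\sigma^n_j\colon\{0,\dots,n+1\}\to\{0,\dots,n\}$, so your surjection should land in an $(n+1)$-element set, and composition order is reversed), and note that ``non-increasing'' refers to the left-to-right reading of the subscripts, as in \eqref{eq:MagBasis}.
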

\begin{proof}
  Recall from Remark~\ref{rk:PreferredBasis} that we have a preferred basis for $\Mag$ of the form~\eqref{eq:MagBasis}.
  Now, in $\Simp$ we replace monomials of the form $\partial^{j+1}_{\ell}\partial^j_\ell$ with $\partial^{j+1}_{\ell+1}\partial^j_\ell$. Then the fact that in the basis monomials we must have $i_m>\cdots>i_n$ easily follows. 
\end{proof}

\begin{proposition}\label{prop:simpdelta}
  $\Simp$ is isomorphic to the categorical algebra of $(\Delta^+)^{op}$.
\end{proposition}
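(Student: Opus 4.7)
The plan is to define $\Phi\colon \Simp \to \Bbbk[(\Delta^+)^{op}]$ on generators by $\Phi(1_n) = \id_{[n]}$ and $\Phi(\partial_i^n) = (\sigma_i^n)^{op}$, where $\sigma_i^n\colon [n+1]\to [n]$ is the standard codegeneracy in $\Delta^+$ that identifies $i$ and $i+1$. The bigrading matches on the nose: $(\sigma_i^n)^{op}$ sits in $\Hom_{(\Delta^+)^{op}}([n],[n+1])$, which is exactly the $(n+1,n)$-piece of the categorical algebra, agreeing with the bihomogeneous component $1_{n+1}\cdot \partial_i^n \cdot 1_n = \partial_i^n$ in $\Simp$.

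Extending $\Phi$ to a $\B{K}$-algebra homomorphism requires only that the defining ideal $I_\Simp$ be annihilated. Here I invoke the classical simplicial identity for codegeneracies in $\Delta^+$, namely $\sigma_j^n\, \sigma_i^{n+1} = \sigma_i^n\, \sigma_{j+1}^{n+1}$ for $0\leq i\leq j\leq n$. Passing to the opposite category reverses composition order and turns this into $(\sigma_i^{n+1})^{op}\, (\sigma_j^n)^{op} = (\sigma_{j+1}^{n+1})^{op}\, (\sigma_i^n)^{op}$, which matches the generator $\partial_i^{n+1}\partial_j^n - \partial_{j+1}^{n+1}\partial_i^n$ of $I_\Simp$ verbatim. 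Consequently $\Phi$ descends to a well-defined $\B{K}$-algebra map.

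To conclude that $\Phi$ is an isomorphism, I compare bases. By Proposition~\ref{prop:SimpStraighten}, $\Simp$ has a $\Bbbk$-basis of normalized monomials $\partial^m_{i_m}\cdots\partial^n_{i_n}$ with $i_m>\cdots>i_n$ and $0\leq i_j\leq j$. Under $\Phi$ such a monomial is sent to the opposite of the composite $\sigma_{i_n}^n\circ \sigma_{i_{n+1}}^{n+1}\circ \cdots\circ \sigma_{i_m}^m \colon [m+1]\to [n]$ in $\Delta^+$. Reading the indices in this composite from left to right they are strictly increasing $i_n<i_{n+1}<\cdots<i_m$ with $i_j\leq j$, which is precisely the Eilenberg--Zilber canonical form that assigns to every order-preserving surjection in $\Delta^+$ a unique factorization into codegeneracies. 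Hence $\Phi$ carries the preferred basis of $\Simp$ bijectively onto the morphism set of $(\Delta^+)^{op}$, proving the isomorphism. The main obstacle I anticipate is purely notational: simultaneously tracking the reversal of composition order introduced by the opposite category and aligning the strictly decreasing indexing convention used for the basis of $\Simp$ with the strictly increasing convention of the normal form in $\Delta^+$. Once that bookkeeping is done, both relations and bases match identically.
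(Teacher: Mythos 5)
Your proposal is correct and takes essentially the same route as the paper: both rest on the codegeneracy relation $\sigma_j^n\sigma_i^{n+1}=\sigma_i^n\sigma_{j+1}^{n+1}$ and the unique factorization of monotone surjections into codegeneracies with strictly increasing indices, matched against the normalized basis of $\Simp$ from Proposition~\ref{prop:SimpStraighten}. Your write-up is merely a bit more explicit than the paper in defining the map $\partial_i^n\mapsto(\sigma_i^n)^{op}$ on generators and verifying that $I_{\Simp}$ is annihilated, which the paper leaves implicit.
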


\begin{proof}
  The maps $\sigma_j^n: \{0,\ldots ,n+1\} \to \{0,\ldots , n\}$ are defined as the order-preserving surjections that send both $j$ and $j+1$ to $j$, for $0\leq j \leq n$ and $n\geq 0$. These surjections are subject to relations $\sigma^{n}_{j} \circ \sigma^{n+1}_{i}=\sigma^{n}_{i} \circ \sigma^{n+1}_{j+1}$ for $0\leq i \leq j \leq n$ and for all $n\geq 0$.  So any morphism $\phi\colon\{0,\ldots,m\} \to \{0,\ldots,n\}$ in $\Delta^+$ can be uniquely decomposed as 
    \[ \phi= \sigma^{n}_{i_n}\circ \cdots \circ \sigma^{m-1}_{i_{m-1}} \]
    where $i_n<\cdots<i_{m-1}$. Notice that the monomials of the form~\eqref{eq:BasisSimp} are in bijection with these unique compositions in the reverse order. This finishes the proof.
\end{proof}

\subsection{$\Braid$}\label{sec:braid}
The relations below that define the $\B{K}$-algebra $\Braid $ come from the braid groups as defined by Artin~\cite{Artin}. We rewrite their presentations in our context.

We define the \emph{braid} $\B{K}$-algebra $\Braid $ as the quotient of $T(\chi)$ by the ideal $I_{\Braid }$ where $$I_{\Braid }=\langle\chi_i^n \chi_j^n-\chi_j^n \chi_i^n,\ \chi_i^n \chi_{i+1}^n \chi_i^n-\chi_{i+1}^n \chi_i^n \chi_{i+1}^n \mid 2\leq |i-j|,\ 2\leq n,\ 0\leq i \leq n-2 \rangle$$

The relations above describe the braid groups $B_{n+1}$ on $n+1$ strands for $n\geq 1$. The generator $\brd{n}{j}$ can be depicted as follows:
\begin{equation}
	\begin{tikzpicture}[xscale=0.5, yscale=0.5]
		
		\begin{knot}[
			clip width=5,
			clip radius=8pt,
			]
			
			\node at (-1, -0.5)  {\scalebox{0.5}{$0$}} ; 
			\node at (0 , -0.5) {\scalebox{0.5}{$1$}} ; 
			\node at (2 , -0.5) {\scalebox{0.5}{$j$}} ;
			\node at (3 , -0.5) {\scalebox{0.5}{$j+1$}} ;
			\node at (5 , -0.5) {\scalebox{0.5}{$n-1$}} ;
			\node at (6 , -0.5) {\scalebox{0.5}{$n$}} ; 
			\node at (1 , 0.6) {\scalebox{1.2}{$\cdots$}};
			\node at (-1, 1.7 )  {\scalebox{0.5}{$0$}} ; 
			\node at (0 , 1.7) {\scalebox{0.5}{$1$}} ; 
			\node at (2 , 1.7) {\scalebox{0.5}{$j$}} ; 
			\node at (3 , 1.7) {\scalebox{0.5}{$j+1$}} ; 
			\node at (5 , 1.7) {\scalebox{0.5}{$n-1$}} ;
			\node at (6 , 1.7) {\scalebox{0.5}{$n$}} ;
			\node at (4 , 0.6) {\scalebox{1.2}{$\cdots$}};
			
			\strand[thick] (-1, 0.0) to (-1, 0.27) to (-1, 0.93) to (-1, 1.2) ;
			
			\strand[thick] (0, 0.0) to (0, 0.27) to (0, 0.93) to (0, 1.2) ;
			
			\strand[thick] (2, 0.0) to (2, 0.27) to (3, 0.93) to (3, 1.2) ;
			
			\strand[thick] (3, 0.0) to (3, 0.27) to (2, 0.93) to (2, 1.2) ;
			
			\strand[thick] (5, 0.0) to (5, 1.2) ;
			
			\strand[thick] (6, 0.0) to (6, 1.2) ;
			
		\end{knot} 
		
	\end{tikzpicture}
\end{equation}	
Then the two defining relations for $i<j$ in $\Braid$ can be depicted  as follows:
\[\begin{tabular}{c c }
    \begin{tikzpicture}[xscale=0.5, yscale=0.5]
      
      \begin{knot}[
        clip width=5,
        clip radius=8pt,
        ]
        
        \node at (0, -0.5)  {\scalebox{0.5}{$i$}} ; 
        \node at (1 , -0.5) {\scalebox{0.5}{$i+1$}} ; 
        \node at (3 , -0.5) {\scalebox{0.5}{$j$}} ; 
        \node at (4 , -0.5) {\scalebox{0.5}{$j+1$}} ; 
        \node at (2.1 , 1.2) {\scalebox{1}{$\cdots$}} ;
        \node at (0, 2.9 )  {\scalebox{0.5}{$i$}} ; 
        \node at (1 , 2.9) {\scalebox{0.5}{$i+1$}} ; 
        \node at (3 , 2.9) {\scalebox{0.5}{$j$}} ; 
        \node at (4 , 2.9) {\scalebox{0.5}{$j+1$}} ; 
        \strand[thick] (0, 0.0) to (0, 0.27) to (1, 0.93) to (1, 1.2) to (1, 1.47) to (1, 2.13) to (1, 2.4) ;
        
        \strand[thick] (1, 0.0) to (1, 0.27) to (0, 0.93) to (0, 1.2) to (0, 1.47) to (0, 2.13) to (0, 2.4) ;

        \strand[thick] (3, 0.0) to (3, 0.27) to (3, 0.93) to (3, 1.2) to (3, 1.47) to (4, 2.13) to (4, 2.4);
        
        \strand[thick] (4, 0.0) to (4, 0.27) to (4, 0.93) to (4, 1.2) to (4, 1.47) to (3, 2.13) to (3, 2.4);

      \end{knot}       
    \end{tikzpicture}
    & \begin{tikzpicture}[xscale=0.5, yscale=0.5]
      
      \begin{knot}[
        clip width=5,
        clip radius=8pt,
        ]
        
        \node at (0, -0.5)  {\scalebox{0.5}{$i$}} ; 
        \node at (1 , -0.5) {\scalebox{0.5}{$i+1$}} ; 
        \node at (3 , -0.5) {\scalebox{0.5}{$j$}} ; 
        \node at (4 , -0.5) {\scalebox{0.5}{$j+1$}} ; 
        \node at (2.1 , 1.2) {\scalebox{1}{$\cdots$}} ;
        \node at (0, 2.9 )  {\scalebox{0.5}{$i$}} ; 
        \node at (1 , 2.9) {\scalebox{0.5}{$i+1$}} ; 
        \node at (3 , 2.9) {\scalebox{0.5}{$j$}} ; 
        \node at (4 , 2.9) {\scalebox{0.5}{$j+1$}} ; 
        \node at (-2 , 1.2) {\scalebox{1}{$=$}} ; 
        \strand[thick] (0, 0.0) to (0, 0.27) to (0, 0.93) to (0, 1.2) to (0, 1.47) to (1, 2.13) to (1, 2.4) ;
        
        \strand[thick] (1, 0.0) to (1, 0.27) to (1, 0.93) to (1, 1.2) to (1, 1.47) to (0, 2.13) to (0, 2.4) ;
        
        \strand[thick] (3, 0.0) to (3, 0.27) to (4, 0.93) to (4, 1.2) to (4, 1.47) to (4, 2.13) to (4, 2.4) ;
        
        \strand[thick] (4, 0.0) to (4, 0.27) to (3, 0.93) to (3, 1.2) to (3, 1.47) to (3, 2.13) to (3, 2.4) ;
        
      \end{knot}       
    \end{tikzpicture}        
  \end{tabular}
  \qquad\qquad
  \begin{tabular}{c c }
    
    \begin{tikzpicture}[xscale=0.5, yscale=0.5]
      
      \begin{knot}[
        clip width=5,
        clip radius=8pt,
        ]
        
        \node at (0, -0.5)  {\scalebox{0.5}{$i$}} ; 
        \node at (1 , -0.5) {\scalebox{0.5}{$i+1$}} ; 
        \node at (2 , -0.5) {\scalebox{0.5}{$i+2$}} ; 
        \node at (0, 4.1 )  {\scalebox{0.5}{$i$}} ; 
        \node at (1 , 4.1) {\scalebox{0.5}{$i+1$}} ; 
        \node at (2 , 4.1) {\scalebox{0.5}{$i+2$}};
        \node at (4 , 2.3) {\scalebox{1}{$=$}}
        ; 
        \strand[thick] (0, 0.0) to (0, 0.27) to (1, 0.93) to (1, 1.2) to (1, 1.47) to (2, 2.13) to (2, 2.4) to (2, 2.67) to (2, 3.33) to (2, 3.6) ;
        
        \strand[thick] (1, 0.0) to (1, 0.27) to (0, 0.93) to (0, 1.2) to (0, 1.47) to (0, 2.13) to (0, 2.4) to (0, 2.67) to (1, 3.33) to (1, 3.6) ;
        
        \strand[thick] (2, 0.0) to (2, 0.27) to (2, 0.93) to (2, 1.2) to (2, 1.47) to (1, 2.13) to (1, 2.4) to (1, 2.67) to (0, 3.33) to (0, 3.6) ;
        
      \end{knot} 
      
    \end{tikzpicture} 
    & \begin{tikzpicture}[xscale=0.5, yscale=0.5]
        
        \begin{knot}[
          clip width=5,
          clip radius=8pt,
          ]
          
          \node at (0, -0.5)  {\scalebox{0.5}{$i$}} ; 
          \node at (1 , -0.5) {\scalebox{0.5}{$i+1$}} ; 
          \node at (2 , -0.5) {\scalebox{0.5}{$i+2$}} ; 
          \node at (0, 4.1 )  {\scalebox{0.5}{$i$}} ; 
          \node at (1 , 4.1) {\scalebox{0.5}{$i+1$}} ; 
          \node at (2 , 4.1) {\scalebox{0.5}{$i+2$}} ; 
          \strand[thick] (0, 0.0) to (0, 0.27) to (0, 0.93) to (0, 1.2) to (0, 1.47) to (1, 2.13) to (1, 2.4) to (1, 2.67) to (2, 3.33) to (2, 3.6) ;
          
          \strand[thick] (1, 0.0) to (1, 0.27) to (2, 0.93) to (2, 1.2) to (2, 1.47) to (2, 2.13) to (2, 2.4) to (2, 2.67) to (1, 3.33) to (1, 3.6) ;
          
          \strand[thick] (2, 0.0) to (2, 0.27) to (1, 0.93) to (1, 1.2) to (1, 1.47) to (0, 2.13) to (0, 2.4) to (0, 2.67) to (0, 3.33) to (0, 3.6) ;
          
        \end{knot}       
      \end{tikzpicture}
  \end{tabular}
\]

\subsection{$\Sym$}
Now we consider the Coxeter groups corresponding to the Artin braid groups defined in Section~\ref{sec:braid} and write their presentations.

We define the \emph{symmetric} $\B{K}$-algebra $\Sym $ as the quotient $T(\chi) / I_{\Sym }$ where
\[ I_{\Sym }=  I_{\Braid } + \langle \chi_i^{n} \chi_i^n-1_n \mid 0 \leq i \leq n-1, \text{ and } 1\leq n \rangle \]
One can equivalently define $\Sym$ as the quotient $\Braid  / \langle \chi_i^{n} \chi_i^n-1_n \mid 0 \leq i \leq n-1, \text{ and } 1\leq n \rangle$. 

The string diagram for the extra relation defining $\Sym$ is depicted as follows:
\[\begin{tabular}{c c}
    \begin{tikzpicture}[xscale=0.5, yscale=0.5]
      
      \begin{knot}[
        clip width=5,
        clip radius=8pt,
        ]
        
        \node at (0, -0.5)  {\scalebox{0.5}{$i$}} ; 
        \node at (1 , -0.5) {\scalebox{0.5}{$i+1$}} ; 
        \node at (0, 2.9 )  {\scalebox{0.5}{$i$}} ; 
        \node at (1 , 2.9) {\scalebox{0.5}{$i+1$}} ; 
        \node at (3 , 1.2) {\scalebox{1}{$=$}} ; 
        \strand[thick] (0, 0.0) to (0, 0.27) to (1, 0.93) to (1, 1.2) to (1, 1.47) to (0, 2.13) to (0, 2.4) ;
        
        \strand[thick] (1, 0.0) to (1, 0.27) to (0, 0.93) to (0, 1.2) to (0, 1.47) to (1, 2.13) to (1, 2.4) ;
         \flipcrossings{2}
      \end{knot} 
      
    \end{tikzpicture}
    & \begin{tikzpicture}[xscale=0.5, yscale=0.5]
      
      \begin{knot}[
        clip width=5,
        clip radius=8pt,
        ]
        
        \node at (0, -0.5)  {\scalebox{0.5}{$i$}} ; 
        \node at (1 , -0.5) {\scalebox{0.5}{$i+1$}} ; 
        \node at (0, 2.9 )  {\scalebox{0.5}{$i$}} ; 
        \node at (1 , 2.9) {\scalebox{0.5}{$i+1$}} ; 
        \strand[thick] (0, 0.0) to (0, 2.4) ;
        
        \strand[thick] (1, 0.0) to (1, 2.4) ;
        
      \end{knot} 
      
    \end{tikzpicture} 
  \end{tabular}
\]


\section{Distributive laws between $\B{K}$-algebras}\label{sect:DistributiveLaws}

In this Section, we are going to define a distributive law on a free $\B{K}$-algebra, and then extend it to the other cases we are interested in. The distributive laws we are going to define can be considered as distributive laws of underlying $\Bbbk$-linear categories of PRO(P)s associated to (non)symmetric operads. There are examples of such distributive laws in the literature~\cite{Lack_04,BremnerMarkl19,Bremner_2020}, but our cases are different and we build them from the ground up using explicit generators and relations.

\subsection{Transpositions}

For a $\B{K}$-algebra $\C{B}$ and $\B{K}$-bimodule $\C{A}$, a morphism of $\B{K}$-bimodules $\omega\colon  \C{B} \otimes_{\B{K}} \C{A} \rightarrow \C{A} \otimes_{\B{K}} \C{B}$ is called a right transposition for $\C{B}$ if the following diagram commutes:
\begin{equation}\label{eq:RightTransposition}
  \begin{tikzcd}
    \C{B} \otimes_{\B{K}} \C{B} \otimes_{\B{K}} \C{A}
    \arrow[rr, "{{\C{B}} \otimes \omega}"]
    \arrow[d, "{\mu_{\C{B}}\otimes{\C{A}}}"']
    & & \C{B} \otimes_{\B{K}} \C{A} \otimes_{\B{K}} \C{B}
    \arrow[rr, "{\omega \otimes  {\C{B}} }"]
    & & \C{A} \otimes_{\B{K}} \C{B} \otimes_{\B{K}} \C{B}
    \arrow[d, "{{\C{A}}\otimes\mu_{\C{B}}}"] \\
    \C{B} \otimes_{\B{K}} \C{A} \arrow[rrrr, "\omega"']
    & & & & \C{A} \otimes_{\B{K}} \C{B}
  \end{tikzcd}
\end{equation}

A right transposition is called \emph{unital} if it satisfies the unitality condition:
\begin{equation}\label{eq:UnitalTransposition}
  \begin{tikzcd}
    \C{B} \otimes_{\B{K}} \C{A} \arrow[d, "{\omega}"']  
    & &  
    \C{B} 
    \arrow[ll, "{{\C{B}} \otimes 1_{\C{A}}}"']  
    \arrow[lld, "{1_{\C{A}} \otimes {\C{B}}}"]  
    \\
    \C{A} \otimes_{\B{K}} \C{B}
  \end{tikzcd}
\end{equation}

One can also define (unital) left transpositions similarly.

\subsection{Distributive laws}

Let $\C{B}$ and $\C{A}$ be two $\B{K}$-algebras. A morphism of $\B{K}$-bimodules $\omega\colon\C{B} \otimes_{\B{K}} \C{A} \rightarrow \C{A} \otimes_{\B{K}} \C{B}$ is called a distributive law if it is a unital left transposition for $\C{A}$ and it is a unital right transposition for $\C{B}$.  A distributive law $\omega$ is called \emph{balanced} if $\omega$ is invertible and $\omega^{-1}$ is also a distributive law.

\begin{proposition}\label{prop:algebradistributive}
  A distributive law for two $\B{K}$-algebras  $\omega\colon  \C{B} \otimes_{\B{K}} \C{A} \rightarrow \C{A} \otimes_{\B{K}} \C{B}$ induces a unital associative algebra structure on the $\B{K}$-bimodule $ \C{A} \otimes_{\B{K}} \C{B}$ with the multiplication $(\mu_{\C{A}}\otimes_{\B{K}}\mu_{\C{B}})(\C{A}\otimes_{\B{K}}\omega\otimes_{\B{K}} \C{B})$. We denote the resulting $\B{K}$-algebra by $ \C{A} \otimes_{\omega} \C{B}$.
\end{proposition}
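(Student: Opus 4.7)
The plan is to verify that the candidate multiplication $m := (\mu_{\C{A}}\otimes \mu_{\C{B}})\circ(\C{A}\otimes\omega\otimes\C{B})$ on $\C{A}\otimes_{\B{K}}\C{B}$ is unital and associative by a straightforward diagram chase, using the left- and right-transposition axioms together with associativity and unitality of $\mu_{\C{A}}$ and $\mu_{\C{B}}$. First, I would fix the unit to be $1_\C{A}\otimes 1_\C{B}\colon \B{K}\to \C{A}\otimes_{\B{K}}\C{B}$ (recall $\B{K}\otimes_{\B{K}}\B{K}\cong\B{K}$ on faithful $\B{K}$-bimodules). Unitality then follows directly: the composition $(\C{A}\otimes\C{B})\otimes(\B{K}\otimes\B{K})\xrightarrow{\;\id\otimes 1_\C{A}\otimes 1_\C{B}\;}(\C{A}\otimes\C{B})\otimes(\C{A}\otimes\C{B})\xrightarrow{m} \C{A}\otimes\C{B}$ reduces, via the unital right transposition~\eqref{eq:UnitalTransposition} applied to $\C{B}\otimes 1_\C{A}$, to $\mu_\C{A}(\C{A}\otimes 1_\C{A})\otimes\mu_\C{B}(\C{B}\otimes 1_\C{B})=\id$; the left-hand unit identity is symmetric.

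For associativity I would compare the two composites $m\circ(m\otimes\id)$ and $m\circ(\id\otimes m)$ on $\C{A}\otimes\C{B}\otimes\C{A}\otimes\C{B}\otimes\C{A}\otimes\C{B}$. Both sides amount to ``pull all $\C{A}$ factors to the left and all $\C{B}$ factors to the right, then multiply within each factor''. Concretely, the first composite applies $\C{A}\otimes\omega\otimes\C{B}$ to slots 2--3, multiplies in $\C{A}$ and $\C{B}$, and then applies $\C{A}\otimes\omega\otimes\C{B}$ to the new middle slots; the second does the transpositions in the reverse order. The transposition-axiom diagram~\eqref{eq:RightTransposition} for $\C{B}$ is precisely what is needed to show that applying $\omega$ twice in succession against a single $\C{A}$ coincides with first multiplying the two $\C{B}$'s and then applying $\omega$ once; the analogous left-transposition diagram for $\C{A}$ does the symmetric job. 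Combined with the associativity of $\mu_\C{A}$ and $\mu_\C{B}$, this collapses both composites to the common value $(\mu_\C{A}^{(3)}\otimes \mu_\C{B}^{(3)})\circ(\C{A}\otimes\C{A}\otimes\omega\otimes\omega\otimes\C{B}\otimes\C{B})\circ(\C{A}\otimes\omega\otimes\omega\otimes\C{B})$ after suitable reorderings.

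The main obstacle is purely bookkeeping: making sure that the order in which the four instances of $\omega$ are applied on a triple tensor product can be interchanged. The cleanest way I would present it is to draw the hexagonal diagram whose outer boundary is $m\circ(m\otimes\id)=m\circ(\id\otimes m)$ and whose interior is tiled by one copy of~\eqref{eq:RightTransposition} (for $\C{B}$), one copy of the analogous left-transposition square (for $\C{A}$), and two naturality squares for $\otimes_{\B{K}}$; each internal region commutes by hypothesis or by functoriality of the tensor product of $\B{K}$-bimodules. Once this diagram is in place the result is immediate, and the algebra structure is denoted $\C{A}\otimes_\omega\C{B}$ as in the statement.
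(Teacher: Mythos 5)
Your argument is correct, but it is not the route the paper takes. You give the direct, elementary verification: unitality of the product $(\mu_{\C{A}}\otimes\mu_{\C{B}})(\C{A}\otimes\omega\otimes\C{B})$ from the unitality condition~\eqref{eq:UnitalTransposition} on $\omega$ together with unitality of $\mu_{\C{A}},\mu_{\C{B}}$, and associativity by tiling the relevant hexagon with one instance of the right-transposition square~\eqref{eq:RightTransposition} for $\C{B}$, one instance of the left-transposition square for $\C{A}$, the associativity squares for $\mu_{\C{A}}$ and $\mu_{\C{B}}$, and interchange (bifunctoriality of $\otimes_{\B{K}}$) to commute the two applications of $\omega$ that act on disjoint tensor slots. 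That last point is the only place where care is needed, and you identify it correctly; it is exactly the middle-four interchange in the monoidal category of faithful $\B{K}$-bimodules, so the chase closes. The paper instead proves the proposition in one line by invoking Beck's theorem: the functors $(\,\cdot\,)\otimes_{\B{K}}\C{A}$ and $(\,\cdot\,)\otimes_{\B{K}}\C{B}$ are monads on $\B{K}$-modules, $\omega$ induces a distributive law of monads, and hence the composite $(\,\cdot\,)\otimes_{\B{K}}\C{A}\otimes_{\B{K}}\C{B}$ is again a monad, which is equivalent to the claimed algebra structure on $\C{A}\otimes_{\B{K}}\C{B}$. Your approach buys self-containedness and makes visible precisely which of the four transposition/unitality axioms enters where; the paper's approach buys brevity and places the construction in its standard monad-theoretic context, at the cost of outsourcing the diagram chase you carried out to the cited reference. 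Either proof is acceptable; if you keep yours, state explicitly that the interchange step is bifunctoriality of $\otimes_{\B{K}}$ on $\B{K}$-bimodules, so the reader does not mistake it for an extra hypothesis on $\omega$.
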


\begin{proof}
  The functors $(\ \cdot\ )\otimes_{\B{K}}\C{A}$ and $(\ \cdot\ )\otimes_{\B{K}}\C{B}$ are monads on the category of $\B{K}$-modules. Then $(\ \cdot\ )\otimes_{\B{K}} \C{A}\otimes_{\B{K}}\C{B}$ is the composite endofunctor which is a monad by \cite[Sect.1.]{Beck:Distributive}.
\end{proof}

The string diagram of this twisted product algebra structure is as follows:
\[\begin{tikzpicture} [xscale=0.35, yscale=0.2]	
    \begin{knot}[
      clip width=5,
      clip radius=8pt,
      ]
      \node (A) at (0,7.6)  {$\mathcal{B}$};
      \node (A) at (-10,7.6)  {$\mathcal{A}$};
      \node (B) at (-6,7.6)  {$\mathcal{B}$};
      \node (C) at (-4,7.6)  {$\mathcal{A}$};
      \node (D) at (-2,-0.6)  {$\mathcal{B}$};
      \node (E) at (-8,-0.6)  {$\mathcal{A}$};
      
      \strand[thick] (-2,0)
      to [out=up, in=down, looseness=0.1] (-2,1)
      to [out=up, in=down, looseness=0.1] (0,3)
      to [out=up, in=down, looseness=0.1] (0,7);
      
      \strand[thick] (-2,1)
      to [out=up, in=down, looseness=0.1] (-4,3)
      to [out=up, in=down, looseness=0.1] (-4,4)
      to [out=up, in=down, looseness=0.1] (-6,6)
      to [out=up, in=down, looseness=0.1] (-6,7);
      
      \strand[thick] (-8,0)
      to [out=up, in=down, looseness=0.1] (-8,1)
      to [out=up, in=down, looseness=0.1] (-10,3)
      to [out=up, in=down, looseness=0.1] (-10,7);
      
      \strand[thick] (-8,1)
      to [out=up, in=down, looseness=0.1] (-6,3)
      to [out=up, in=down, looseness=0.1] (-6,4)
      to [out=up, in=down, looseness=0.1] (-4,6)
      to [out=up, in=down, looseness=0.1] (-4,7);

      \flipcrossings{1}
    \end{knot}
  \end{tikzpicture}
\]
	
\begin{proposition}\label{prop:unique}
  Let $\C{C}$ be  a $\B{K}$-algebra and $\C{A}$, and $\C{B}$ be two $\B{K}$-subalgebras. If  $\C{C}$ is isomorphic to $\C{A}\otimes_{\B{K}}\C{B}$ as a $\B{K}$-bimodule then there is a unique distributive law $\omega\colon \C{B}\otimes_{\B{K}}\C{A} \rightarrow \C{A}\otimes_{\B{K}}\C{B} $ which makes $\C{A}\otimes_{\omega}\C{B}$ isomorphic to $\C{C}$ as a $\B{K}$-algebra.
\end{proposition}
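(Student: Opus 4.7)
The plan is to build $\omega$ directly from the multiplication on $\C{C}$ and the given bimodule isomorphism $\phi\colon \C{A}\otimes_{\B{K}}\C{B}\to\C{C}$. Writing $\iota_{\C{A}}\colon\C{A}\hookrightarrow\C{C}$ and $\iota_{\C{B}}\colon\C{B}\hookrightarrow\C{C}$ for the two subalgebra inclusions, I would set
\[
\omega := \phi^{-1} \circ \mu_{\C{C}} \circ (\iota_{\C{B}}\otimes_{\B{K}} \iota_{\C{A}})\colon \C{B}\otimes_{\B{K}}\C{A} \longrightarrow \C{A}\otimes_{\B{K}}\C{B}.
\]
This is automatically a $\B{K}$-bimodule morphism since each factor is, so the question reduces to checking the unit and multiplicativity conditions of a distributive law, and then verifying uniqueness.

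The unitality diagrams~\eqref{eq:UnitalTransposition} for both $\C{A}$ and $\C{B}$ follow immediately because $\iota_{\C{A}}$ and $\iota_{\C{B}}$ send the respective units to the single unit of $\C{C}$; for instance, $\omega(1_{\C{B}}\otimes a) = \phi^{-1}(\iota_{\C{A}}(a)) = a\otimes 1_{\C{B}}$. For the right-transposition hexagon~\eqref{eq:RightTransposition} and its left-transposition analogue, both composites in each hexagon reduce under $\phi^{-1}$ to an iterated product in $\C{C}$ of the form $\iota_{\C{B}}(b)\iota_{\C{B}}(b')\iota_{\C{A}}(a)$ (respectively $\iota_{\C{B}}(b)\iota_{\C{A}}(a)\iota_{\C{A}}(a')$), so the equalities are precisely the associativity of $\mu_{\C{C}}$ combined with the fact that $\iota_{\C{A}}$ and $\iota_{\C{B}}$ are algebra homomorphisms. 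Once $\omega$ is established as a distributive law, an analogous diagram chase shows that $\phi$ intertwines the twisted product on $\C{A}\otimes_{\omega}\C{B}$ from Proposition~\ref{prop:algebradistributive} with $\mu_{\C{C}}$, so $\phi$ is already the required $\B{K}$-algebra isomorphism.

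For uniqueness, I would first fix the natural convention that the algebra isomorphism $\C{A}\otimes_{\omega'}\C{B}\to\C{C}$ restricts to $\iota_{\C{A}}$ and $\iota_{\C{B}}$ on the respective subalgebras, since both appear as subalgebras on the two sides. Then, for any competing distributive law $\omega'$, the element $a\otimes b$ in $\C{A}\otimes_{\omega'}\C{B}$ factors as $(a\otimes 1_{\C{B}})\cdot(1_{\C{A}}\otimes b)$, and the unit axioms of $\omega'$ force this product to be sent to $\iota_{\C{A}}(a)\iota_{\C{B}}(b)$ in $\C{C}$; this pins the algebra isomorphism down to $\phi$ itself. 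Running the recipe $\phi^{-1}\circ\mu_{\C{C}}\circ(\iota_{\C{B}}\otimes\iota_{\C{A}})$ then forces $\omega'=\omega$. The main obstacle is really only bookkeeping: making explicit the implicit identification of $\C{A}$ and $\C{B}$ as subalgebras on both sides of $\phi$, since without fixing that convention the ``uniqueness'' statement is empty. Once the convention is fixed, the transposition hexagons collapse mechanically to the associativity of $\mu_{\C{C}}$.
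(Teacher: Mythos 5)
Your construction is essentially the paper's own proof: the paper also defines $\omega$ from the multiplication of $\C{C}$ transported through the $\B{K}$-bimodule isomorphism, gets unitality from the shared unit $1_{\C{A}}=1_{\C{B}}$, and reduces the right and left transposition hexagons to the associativity identities $(bb')a=b(b'a)$ and $b(aa')=(ba)a'$ in $\C{C}$. Your extra care about fixing the convention that the isomorphism restricts to the subalgebra inclusions, and the resulting uniqueness argument via $a\otimes b=(a\otimes 1_{\C{B}})\cdot(1_{\C{A}}\otimes b)$, is a more explicit version of what the paper leaves implicit, not a different route.
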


\begin{proof}
  The distributive law $\omega\colon\C{B}\otimes_{\B{K}}\C{A}\to \C{A}\otimes_{\B{K}}\C{B}$ comes from the multiplication in $\C{C}$ and the fact that $\C{C}$ is isomorphic to $\C{A}\otimes_{\B{K}}\C{B}$ as $\B{K}$-bimodules. The fact that $\omega$ is a unital distributive law (commutativity of the Diagram~\eqref{eq:UnitalTransposition}) comes from the fact that $1_\C{A}=1_\C{B}$, and the fact that $\omega$ is a right transposition (i.e. that the Diagram~\eqref{eq:RightTransposition} commutes) can be described as $(bb')a = b(b'a)$, while the fact that $\omega$ is a left transposition (the dual diagram of
  \eqref{eq:RightTransposition}) can be described as $b(aa')=(ba)a'$ for every $b,b'\in\C{B}$ and $a,a'\in \C{A}$.
\end{proof}

\subsection{Distributive laws on free algebras}

\begin{proposition}\label{420}
  Any morphism of faithful $\B{K}$-bimodules $\omega\colon  W \otimes_{\B{K}} V \rightarrow V \otimes_{\B{K}} W$ can be extended into a unique morphism of the type
  \[ \omega_n^m\colon  W^{\otimes_{\B{K}}m} \otimes_{\B{K}} V^{\otimes_{\B{K}}n} \rightarrow V^{\otimes_{\B{K}}n} \otimes_{\B{K}} W^{\otimes_{\B{K}}m} \] 
  for any $n,m \geq 1$ by applying $\omega$ successively to the  $W^{\otimes_{\B{K}}m} \otimes_{\B{K}} V^{\otimes_{\B{K}}n}$ while keeping other components the same. In other words, the order of application of $\omega$'s yield the same unique morphism.
\end{proposition}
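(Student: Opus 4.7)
The plan is to construct $\omega_n^m$ by a fixed iteration scheme and then verify, by a confluence argument, that every other scheme of successive applications of $\omega$ produces the same morphism. First, by induction on $n$, I would define $\omega_n^1 \colon W \otimes_{\B{K}} V^{\otimes_{\B{K}} n} \to V^{\otimes_{\B{K}} n} \otimes_{\B{K}} W$ by setting $\omega_1^1 := \omega$ and
\[ \omega_n^1 := (V \otimes_{\B{K}} \omega_{n-1}^1) \circ (\omega \otimes_{\B{K}} V^{\otimes_{\B{K}} (n-1)}). \]
Then, by induction on $m$, I would define $\omega_n^m$ via
\[ \omega_n^{m} := (\omega_n^1 \otimes_{\B{K}} W^{\otimes_{\B{K}} (m-1)}) \circ (W \otimes_{\B{K}} \omega_n^{m-1}). \]
This amounts to first moving the leftmost copy of $W$ past every copy of $V$, then the next, and so on, and it gives one concrete morphism of the desired type.

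For uniqueness, I would view any other iteration scheme as a finite sequence of elementary operations, each replacing a local adjacent factor $W \otimes_{\B{K}} V$ inside an intermediate tensor word by $V \otimes_{\B{K}} W$ via a single application of $\omega$. Two observations then suffice: (a) two elementary operations acting on disjoint adjacent pairs of tensor factors commute, directly from the bifunctoriality of $\otimes_{\B{K}}$ in the category of faithful $\B{K}$-bimodules; and (b) no ``braid-type'' rearrangement is ever required along a legal scheme. Indeed, once $\omega$ has been applied at positions $(i,i+1)$, the $W$ has migrated to position $i+1$ and a $V$ occupies position $i$, so any further application of $\omega$ at $(i,i+1)$ is illegal until a new $W$ is shifted into position $i$ by intervening moves. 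In particular, a legal scheme never contains a subword of the form $(i,i+1)\,(i+1,i+2)\,(i,i+1)$. Combining (a) and (b), any two legal schemes are linked by a finite chain of disjoint-commutation swaps, each of which preserves the composite morphism, and hence produce the same $\omega_n^m$.

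The main obstacle is the purely combinatorial verification underlying (b), namely that disjoint-commutation moves alone connect any two legal schemes. This is an instance of Matsumoto's theorem applied to the reduced decompositions of the $(m,n)$-shuffle permutation in $S_{m+n}$, restricted to the sub-monoid of words built from those simple transpositions that realize legal $W$-$V$ swaps on intermediate words; the restriction is exactly what forbids the configurations where a braid relation would otherwise be needed. A careful inductive bookkeeping of the positions of the $W$'s and $V$'s within intermediate words turns this into a straightforward, if tedious, case analysis.
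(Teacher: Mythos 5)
Your proposal is correct and is essentially the paper's own argument: the paper's proof consists precisely of your observation (a), namely that two applications of $\omega$ at disjoint adjacent positions commute by bifunctoriality of $\otimes_{\B{K}}$ (the displayed interchange square), with the remaining combinatorial claim that any two orders of successive application are linked by such disjoint commutations left implicit as ``obvious.'' Your additional scaffolding --- the recursive construction of one scheme, and the identification of legal schemes with reduced words of the block-shuffle permutation lying in a single commutation class --- merely makes that implicit step explicit; note only that the precise tool for commutation-only connectivity is the full commutativity of this $321$-avoiding permutation (Stembridge) rather than Matsumoto's theorem by itself, which is exactly what your sketched case analysis would establish.
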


\begin{proof}
  Extension exists and the difference in the order of application can be boiled down to the commutativity of the following diagram for the case $n,m \geq 2$ which is obvious.
  \[ \xymatrix{
      W\otimes_{\B{K}} V\otimes_{\B{K}} W\otimes_{\B{K}} V
      \ar[rr]^{\omega\otimes W\otimes V}
      \ar[d]_{W\otimes V\otimes\omega}
      & & V\otimes_{\B{K}} W\otimes_{\B{K}} W\otimes_{\B{K}} V
      \ar[d]^{V\otimes W\otimes\omega}\\
      W\otimes_{\B{K}} V\otimes_{\B{K}} V\otimes_{\B{K}} W
      \ar[rr]_{\omega\otimes V\otimes W}
      & & V\otimes_{\B{K}} W\otimes_{\B{K}} V\otimes_{\B{K}} W
    }\]
\end{proof}

We can take the direct sum of such extensions and write the ultimate version $\omega_*^* \colon T(W) \otimes_{\B{K}} T(V) \rightarrow T(V) \otimes_{\B{K}} T(W) $.
\begin{proposition}
  Extension of any morphism of faithful $\B{K}$-bimodules $\omega\colon  W \otimes_{\B{K}} V \rightarrow V \otimes_{\B{K}} W$ to their free $\B{K}$-algebras $\omega_*^* \colon T(W) \otimes_{\B{K}} T(V) \rightarrow T(V) \otimes_{\B{K}} T(W) $ is a unital distributive law.
\end{proposition}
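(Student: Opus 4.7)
The plan is to verify the four conditions that make $\omega_*^*$ a unital distributive law: the two unitality identities and the left and right transposition identities from Diagram~\eqref{eq:RightTransposition} (and its dual). First, I would extend the definition of $\omega_n^m$ to the cases $n=0$ or $m=0$ by declaring $\omega_n^0$ and $\omega_0^m$ to be the canonical flips between $\B{K}\otimes_{\B{K}}V^{\otimes n}$, $V^{\otimes n}$, and $V^{\otimes n}\otimes_{\B{K}}\B{K}$ (and similarly for $W^{\otimes m}$), so that $\omega_*^*=\bigoplus_{m,n\geq 0}\omega_n^m$ is defined on all of $T(W)\otimes_{\B{K}}T(V)$. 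Unitality then follows immediately: the unit $1_{T(V)}\colon\B{K}\to T(V)$ is the inclusion of the degree-zero summand, so the restriction of $\omega_*^*$ to $W^{\otimes m}\otimes_{\B{K}}\B{K}$ is $\omega_0^m$, which is the canonical flip by construction. The unitality for $T(W)$ is symmetric.

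For the right transposition identity, I would pick a homogeneous element $w_1\otimes w_2\otimes v$ with $w_i\in W^{\otimes m_i}$ and $v\in V^{\otimes n}$, and chase it through both composites in Diagram~\eqref{eq:RightTransposition}. The outer composite $\omega_*^*\circ(\mu_{T(W)}\otimes T(V))$ first concatenates $w_1\otimes w_2$ into $w_1w_2\in W^{\otimes m_1+m_2}$ and then applies $\omega_n^{m_1+m_2}$, which by Proposition~\ref{420} is the result of pushing $\omega$ successively through all $m_1+m_2$ copies of $W$. The inner composite $(T(V)\otimes\mu_{T(W)})\circ(\omega_*^*\otimes T(W))\circ(T(W)\otimes\omega_*^*)$ applies $\omega_n^{m_2}$, pushing $\omega$ through the $m_2$ copies of $W$ inside $w_2$, then applies $\omega_n^{m_1}$, pushing through the $m_1$ copies inside $w_1$, and finally concatenates the two $W$-tensors on the right. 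The uniqueness clause of Proposition~\ref{420} guarantees that both orders of application produce the same morphism, so the two composites coincide. The left transposition identity for $T(V)$ is proved by the same argument with the roles of $V$ and $W$ (and $n$ and $m$) interchanged, since $\mu_{T(V)}$ is likewise concatenation.

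The main subtlety here is notational rather than conceptual: one must track the bigrading carefully across the chain of composites, especially to confirm that both sides land in the same homogeneous summand $V^{\otimes n}\otimes_{\B{K}}W^{\otimes m_1+m_2}$ and to check that the $n=0$ and $m=0$ edge cases are compatible with the conventions chosen in the first step. Beyond this bookkeeping, the entire content of the proposition is already encoded in the uniqueness statement of Proposition~\ref{420}, so no further calculation should be needed.
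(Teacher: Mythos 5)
Your proposal is correct and matches the paper's (implicit) argument: the paper states this proposition without proof, immediately after Proposition~\ref{420}, precisely because the transposition identities reduce to the order-independence established there, with unitality handled by the degree-zero flip convention you spell out. Your write-up simply makes explicit the bookkeeping (the $n=0$, $m=0$ components and the reduction of both composites in Diagram~\eqref{eq:RightTransposition} to successive applications of $\omega$) that the paper leaves to the reader.
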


\subsection{The fundamental (or first) distributive law}

\begin{lemma}\label{lem:CanonicalDistributiveLaw}
  The $\B{K}$-bimodule morphism $\zeta\colon  T(\partial) \otimes_{\B{K}} T(\chi)\to T(\chi) \otimes_{\B{K}} T(\partial)$ defined on the generators of $T(\partial) \otimes_{\B{K}} T(\chi)$ as
  \begin{equation}\label{eq:CanonicalDistributiveLaw}
    \zeta\left(\partial_i^n \otimes \chi_j^n\right)
    = \begin{cases}
      \chi_{j+1}^{n+1} \otimes \partial_i^n
      & \text { if } i<j \\
      \chi_{i+1}^{n+1} \chi_i^{n+1} \otimes \partial_{i+1}^n
      & \text { if } i=j \\
      \chi_{i-1}^{n+1} \chi_{i}^{n+1} \otimes \partial_{i-1}^n
      & \text { if } i=j+1 \\ \chi_j^{n+1} \otimes \partial_i^n & \text { if } i>j+1
    \end{cases}
  \end{equation}
  defines a distributive law of free $\B{K}$-algebras of the form $\zeta\colon T(\partial)\otimes_{\B{K}} T(\chi)\to T(\chi)\otimes_{\B{K}}T(\partial)$.
\end{lemma}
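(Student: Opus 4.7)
The plan is to follow the same template as Proposition~\ref{420}, with the twist that here the generator-level map $\zeta_0 := \zeta|_{\partial \otimes_{\B{K}} \chi}$ lands in the larger tensor product $T(\chi) \otimes_{\B{K}} T(\partial)$ rather than in the degree-one piece $\chi \otimes_{\B{K}} \partial$. First I would verify that the formula \eqref{eq:CanonicalDistributiveLaw} defines a morphism of faithful $\B{K}$-bimodules on generators: this reduces to a direct bigrading check, since in all four cases both $\partial_i^n \otimes \chi_j^n$ and its image sit in bigrading $(n, n+1)$.

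Next I would extend $\zeta_0$ to the full tensor product $T(\partial) \otimes_{\B{K}} T(\chi)$ by iterating the transposition diagrams. Given $\partial_{i_1} \cdots \partial_{i_k} \otimes \chi_j^n$, apply $\zeta_0$ from right to left using the right-transposition diagram \eqref{eq:RightTransposition}; dually, given $\partial_i^n \otimes \chi_{j_1} \cdots \chi_{j_\ell}$, apply $\zeta_0$ from left to right using the left-transposition diagram. Both recursions terminate because each $\zeta_0$-step strictly decreases the number of unprocessed $\chi$'s sitting to the right of a $\partial$. Unitality of the resulting $\zeta$ is immediate, since $1_n$ acts as the identity on both $\chi_j^n$ and $\partial_i^n$ from either side.

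The main content of the proof is to verify that the two recursive definitions agree on the smallest common input $\partial_{i_1} \partial_{i_2} \otimes \chi_{j_1} \chi_{j_2}$. A standard length induction then reduces general consistency to this hexagon-style coherence. The check splits along the relative values of $i_1, i_2, j_1, j_2$: whenever all gaps $|i_k - j_\ell|$ are at least $2$, the verification is automatic because $\zeta_0$ on each pair is just the plain swap with a degree shift. The delicate subcases are those where some $i_k$ equals $j_\ell$ or $j_\ell + 1$, since $\zeta_0$ then outputs a length-two word in $T(\chi)$ and the subsequent index shifts must be tracked with care as the second $\zeta_0$-step pushes that word further.

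This hexagon check is the principal obstacle. I would organize the case analysis via the string-diagram interpretation built up in Section~\ref{sect:CombinatorialOperations}: $\partial_i^n$ as a strand insertion at position $i$, $\chi_j^n$ as a crossing of positions $j$ and $j+1$, and $\zeta_0$ as the canonical way to slide an insertion past a crossing. In that picture the hexagon for $\partial_{i_1} \otimes \partial_{i_2} \otimes \chi_{j_1} \otimes \chi_{j_2}$ becomes an isotopy between two diagrams with identical strand configurations, and the corresponding algebraic identities can then be read off case by case without further creative input.
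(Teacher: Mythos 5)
Your outline is essentially the approach the paper intends, but note that the paper gives no explicit proof of this lemma at all: it is treated as the routine extension of Proposition~\ref{420} and the proposition following it to the case where the generator-level map lands in $T(\chi)\otimes_{\B{K}}T(\partial)$ rather than in $\chi\otimes_{\B{K}}\partial$, and your write-up supplies exactly those missing details. Where your emphasis is miscalibrated is the ``hexagon'' that you single out as the principal obstacle. Because every output of \eqref{eq:CanonicalDistributiveLaw} contains exactly one $\partial$-generator (and one or two $\chi$-generators), the recursive extension is well founded, and the agreement of the two recursions is \emph{index-independent}: in the $(2,2)$ coherence both routes perform the same collection of elementary moves, differing only in the order of two moves acting on disjoint tensor factors (sliding $\partial_{i_1}$ past the $\chi$-word produced on the left versus sliding the surviving $\partial$ past $\chi_{j_2}$ on the right), and such moves commute by the same square as in Proposition~\ref{420}. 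So no case analysis on whether $i_k$ equals $j_\ell$ or $j_\ell+1$ is needed for this lemma; that index-sensitive work is precisely the content of the later subsections, where $\zeta$ is shown to descend to the quotients $\Mag$, $\Simp$, $\Braid$ and $\Sym$.

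Two small repairs to your bookkeeping. First, reducing general consistency to the generator-indexed $(2,2)$ input is not quite sufficient as stated: the inductive step requires interchanging a generator-level move with the passage of a single $\partial$ past an arbitrary $\chi$-word, which is again the disjoint-factor commutation and should be said explicitly. Second, your termination measure (``number of unprocessed $\chi$'s to the right of a $\partial$'') does not decrease globally, since a move that outputs two crossings increases the count for every $\partial$ further to the left; termination is cleaner via induction on the lengths of the two tensor words, i.e.\ on the number of original $\partial$- and $\chi$-letters still to be processed.
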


The representations of this distributive law in terms of string diagrams  for the second and third cases are shown below:
\[\begin{tabular}{c c}
    \begin{tikzpicture}[xscale=0.5, yscale=0.5]
      
      \begin{knot}[
        clip width=5,
        clip radius=8pt,
        ]
        
        \node at (0, -0.5)  {\scalebox{0.5}{$i$}} ; 
        \node at (2 , -0.5) {\scalebox{0.5}{$i+1$}} ; 
        \node at (-1, 2.9 )  {\scalebox{0.5}{$i$}} ; 
        \node at (1 , 2.9) {\scalebox{0.5}{$i+1$}} ; 
        \node at (2 , 2.9) {\scalebox{0.5}{$i+2$}} ; 
        \node at (4 , 1.8) {\scalebox{1}{$\Longrightarrow$}} ;
        \node at (4 , 2.45) {\scalebox{1}{$\zeta$}} ; 
        \strand[thick] (0, 0.0) to (0, 0.27) to (2, 0.93) to (2, 1.2) to (2, 1.47) to (2, 2.13) to (2, 2.4) ;
        
        \strand[thick] (0, 1.47) to (-1, 2.13) to (-1, 2.4) ;
        
        \strand[thick] (2, 0.0) to (2, 0.27) to (0, 0.93) to (0, 1.2) to (0, 1.47) to (1, 2.13) to (1, 2.4) ;
        
      \end{knot} 
      
    \end{tikzpicture}
    & \begin{tikzpicture}[xscale=0.5, yscale=0.5]
      
      \begin{knot}[
        clip width=5,
        clip radius=8pt,
        ]
        
        \node at (-1, -0.5)  {\scalebox{0.5}{$i$}} ; 
        \node at (1 , -0.5) {\scalebox{0.5}{$i+1$}} ; 
        \node at (-1, 4.1 )  {\scalebox{0.5}{$i$}} ; 
        \node at (0 , 4.1) {\scalebox{0.5}{$i+1$}} ; 
        \node at (2 , 4.1) {\scalebox{0.5}{$i+2$}} ; 
        
        \strand[thick] (-1, 0.0) to (-1, 0.27) to (-1, 0.93) to (-1, 1.2) to (-1, 1.47) to (0, 2.13) to (0, 2.4) to (0, 2.67) to (2, 3.33) to (2, 3.6) ;
        
        \strand[thick] (1, 0.27) to (0, 0.93) to (0, 1.2) to (0, 1.47) to (-1, 2.13) to (-1, 2.4) to (-1, 2.67) to (-1, 3.33) to (-1, 3.6) ;
        
        \strand[thick] (1, 0.0) to (1, 0.27) to (2, 0.93) to (2, 1.2) to (2, 1.47) to (2, 2.13) to (2, 2.4) to (2, 2.67) to (0, 3.33) to (0, 3.6) ;
        
      \end{knot} 
      
    \end{tikzpicture} 
  \end{tabular}
  \qquad\qquad
  \begin{tabular}{c c}
    \begin{tikzpicture}[xscale=0.5, yscale=0.5]
      
      \begin{knot}[
        clip width=5,
        clip radius=8pt,
        ]
        
        \node at (-1, -0.5)  {\scalebox{0.5}{$i-1$}} ; 
        \node at (1 , -0.5) {\scalebox{0.5}{$i$}} ; 
        \node at (-1, 2.9 )  {\scalebox{0.5}{$i-1$}} ; 
        \node at (0 , 2.9) {\scalebox{0.5}{$i$}} ; 
        \node at (2 , 2.9) {\scalebox{0.5}{$i+1$}} ;
        \node at (4 , 1.8) {\scalebox{1}{$\Longrightarrow$}} ;
        \node at (4 , 2.45) {\scalebox{1}{$\zeta$}} ;  
        \strand[thick] (1, 1.47) to (0, 2.13) to (0, 2.4) ;
        
        \strand[thick] (-1, 0.0) to (-1, 0.27) to (1, 0.93) to (1, 1.2) to (1, 1.47) to (2, 2.13) to (2, 2.4) ;
        
        \strand[thick] (1, 0.0) to (1, 0.27) to (-1, 0.93) to (-1, 1.2) to (-1, 1.47) to (-1, 2.13) to (-1, 2.4) ;
        
      \end{knot} 
      
    \end{tikzpicture}
    & \begin{tikzpicture}[xscale=0.5, yscale=0.5]
      
      \begin{knot}[
        clip width=5,
        clip radius=8pt,
        ]
        
        \node at (0, -0.5)  {\scalebox{0.5}{$i-1$}} ; 
        \node at (2 , -0.5) {\scalebox{0.5}{$i$}} ; 
        \node at (-1, 4.1 )  {\scalebox{0.5}{$i-1$}} ; 
        \node at (1 , 4.1) {\scalebox{0.5}{$i$}} ; 
        \node at (2 , 4.1) {\scalebox{0.5}{$i+1$}} ; 
        \strand[thick] (0, 0.27) to (-1, 0.93) to (-1, 1.2) to (-1, 1.47) to (-1, 2.13) to (-1, 2.4) to (-1, 2.67) to (1, 3.33) to (1, 3.6) ;
        
        \strand[thick] (0, 0.0) to (0, 0.27) to (1, 0.93) to (1, 1.2) to (1, 1.47) to (2, 2.13) to (2, 2.4) to (2, 2.67) to (2, 3.33) to (2, 3.6) ;
        
        \strand[thick] (2, 0.0) to (2, 0.27) to (2, 0.93) to (2, 1.2) to (2, 1.47) to (1, 2.13) to (1, 2.4) to (1, 2.67) to (-1, 3.33) to (-1, 3.6) ;
        
      \end{knot} 
      
    \end{tikzpicture} 
  \end{tabular}
\]

\begin{remark}
  In the remaining sections below, we are going to show that the distributive law $\zeta\colon T(\partial)\otimes_{\B{K}} T(\chi)\to T(\chi)\otimes_{\B{K}}T(\partial)$ induces  distributive laws of $\B{K}$-algebras 
  \begin{align}
    \Mag  \otimes_{\B{K}} \Braid\ & \longrightarrow \Braid  \otimes_{\B{K}} \Mag \\
    \Simp  \otimes_{\B{K}} \Braid & \longrightarrow \Braid  \otimes_{\B{K}} \Simp \\
    \Mag  \otimes_{\B{K}} \Sym & \longrightarrow \Sym  \otimes_{\B{K}} \Mag \\
    \Simp  \otimes_{\B{K}} \Sym & \longrightarrow \Sym  \otimes_{\B{K}} \Simp 
  \end{align}
  Below, we will also display the string diagrams of \emph{some} of the identities. The inclusion of a string diagram simply serves to guide the reader in cases where the identity involves lengthy algebraic manipulations.
\end{remark}

But before we proceed, we are going to need the following Lemma:
\begin{lemma}\label{lem:distributive}
  Let $\C{A}=T(V) / I_{\C{A}}$ and $\C{B}=T(W) / I_{\C{B}}$ be two $\B{K}$-algebras. Let $\omega\colon W \otimes_{\B{K}} V \rightarrow V \otimes_{\B{K}} W$ be a morphism of faithful $\B{K}$-bimodules that induces a left transposition for $\C{A}$ of the form $\omega\colon  T(W) \otimes_{\B{K}}\C{A} \rightarrow \C{A} \otimes_{\B{K}} T(W)$ and a right transposition for $\C{B}$ as  $\omega\colon\C{B} \otimes_{\B{K}} T(V) \rightarrow T(V) \otimes_{\B{K}} \C{B}$. Then $\omega$ induces a unital distributive law of the form $\omega\colon \C{B} \otimes_{\B{K}} \C{A} \rightarrow \C{A} \otimes_{\B{K}} \C{B}$.
\end{lemma}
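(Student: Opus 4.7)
The plan is to descend the unital distributive law $\omega_*^*\colon T(W) \otimes_{\B{K}} T(V) \to T(V) \otimes_{\B{K}} T(W)$ furnished by the preceding proposition through the two quotient maps $T(V) \to \C{A}$ and $T(W) \to \C{B}$. The hypotheses (a) and (b) have been arranged precisely so that each of the defining ideals is absorbed separately by $\omega_*^*$, and the main task is to combine these and verify that the induced map on the quotients inherits the three distributive law axioms.

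The first step is to establish well-definedness of an induced $\B{K}$-bimodule morphism $\omega\colon \C{B} \otimes_{\B{K}} \C{A} \to \C{A} \otimes_{\B{K}} \C{B}$. The assumption that $\omega$ descends to a left transposition $T(W) \otimes_{\B{K}} \C{A} \to \C{A} \otimes_{\B{K}} T(W)$ is equivalent to the inclusion $\omega_*^*(T(W) \otimes_{\B{K}} I_\C{A}) \subseteq I_\C{A} \otimes_{\B{K}} T(W)$, and similarly the right-transposition hypothesis for $\C{B}$ gives $\omega_*^*(I_\C{B} \otimes_{\B{K}} T(V)) \subseteq T(V) \otimes_{\B{K}} I_\C{B}$. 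Since $\C{B} \otimes_{\B{K}} \C{A}$ is naturally the quotient of $T(W) \otimes_{\B{K}} T(V)$ by $I_\C{B} \otimes_{\B{K}} T(V) + T(W) \otimes_{\B{K}} I_\C{A}$, composing $\omega_*^*$ with the two quotient projections yields a well-defined $\omega$ on the quotient.

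The remaining task is to check that this $\omega$ satisfies the unital left- and right-transposition diagrams. Unitality is immediate, since the units $1_\C{A}$ and $1_\C{B}$ are images of $1_{T(V)}$ and $1_{T(W)}$ and diagram~\eqref{eq:UnitalTransposition} commutes at the free level by the preceding proposition. For the right-transposition diagram~\eqref{eq:RightTransposition}, the multiplication $\mu_\C{B}$ is induced by concatenation on $T(W)$, so the commutative diagram for $\omega_*^*$ pushes down along the quotient maps to its analogue over $\C{B} \otimes_{\B{K}} \C{A}$; the left-transposition property with respect to $\mu_\C{A}$ is handled identically via the quotient $T(V) \to \C{A}$. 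The only place requiring mild care is ensuring that these diagrams descend cleanly, which is automatic because $\mu_\C{A}$, $\mu_\C{B}$, and the induced $\omega$ are all defined by composition with the respective quotient maps. In short, there is no serious obstacle; the heart of the lemma is that hypotheses (a) and (b) are exactly the conditions needed for the free-level distributive law to pass to the quotient.
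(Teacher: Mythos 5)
Your proposal is correct and follows essentially the same route as the paper: deduce the ideal-absorption inclusions $\omega(I_{\C{B}}\otimes_{\B{K}} T(V)) \subseteq T(V)\otimes_{\B{K}} I_{\C{B}}$ and $\omega(T(W)\otimes_{\B{K}} I_{\C{A}}) \subseteq I_{\C{A}}\otimes_{\B{K}} T(W)$ from the two transposition hypotheses, use the presentations of $\C{B}\otimes_{\B{K}}\C{A}$ and $\C{A}\otimes_{\B{K}}\C{B}$ as quotients of $T(W)\otimes_{\B{K}} T(V)$ and $T(V)\otimes_{\B{K}} T(W)$ to get a well-defined induced map, and observe that the unitality and transposition diagrams descend from the free level. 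The only difference is cosmetic: you spell out the descent of the diagrams, which the paper leaves implicit.
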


\begin{proof}
  The only thing we need to prove is that we have a well-defined morphism of $\B{K}$-bimodules of the form $\omega\colon \C{B} \otimes_{\B{K}} \C{A} \rightarrow \C{A} \otimes_{\B{K}} \C{B}$. The assumption that $\omega$ induces a right transposition for $\C{B}$ and a left transposition for $\C{A}$  implies that $$\omega(I_{\C{B}}\otimes_{\B{K}} T(V)) \subset T(V) \otimes_{\B{K}}I_{\C{B}} \quad \text{ and }\quad \omega(T(W) \otimes_{\B{K}}I_{\C{A}}) \subset I_{\C{A}}\otimes_{\B{K}} T(W)$$ Since we have
  \[ \C{B}\otimes_{\B{K}}\C{A} := \frac{T(W)\otimes_{\B{K}} T(V)}{I_{\C{B}}\otimes_{\B{K}} T(V) + T(W)\otimes_{\B{K}} I_{\C{A}}} \quad\text{ and }\quad \C{A}\otimes_{\B{K}}\C{B} := \frac{T(V)\otimes_{\B{K}} T(W)}{I_{\C{A}}\otimes_{\B{K}} T(W) + T(V)\otimes_{\B{K}} I_{\C{B}}}
  \]
  we see that the induced $\B{K}$-bimodule morphism is a well-defined unital distributive law.
\end{proof}
	
\subsection{$\Mag  \otimes_{\B{K}} \Braid \to \Braid  \otimes_{\B{K}} \Mag $}\label{subsect:MagToBraid}

We need to show that the conditions stated in Lemma~\ref{lem:distributive} are satisfied for $\zeta$. Firstly, we are going to show that $\zeta: T(\partial) \otimes_{\B{K}} T(\chi) \to T(\chi) \otimes_{\B{K}} T(\partial)$ extends to a left transposition of the form $\zeta: T(\partial) \otimes_{\B{K}} \Braid \to \Braid \otimes_{\B{K}} T(\partial)$. We need to show that the following diagram commutes: 
\[\begin{tikzcd}
    T(\partial)\otimes_{\B{K}}\Braid\otimes_{\B{K}}\Braid   
    \arrow[r, "{\zeta\otimes\Braid}"]   
    \arrow[d, "{T(\partial)\otimes\mu_{\Braid}}"'] 
    &   
    \Braid \otimes_{\B{K}} T(\partial) \otimes_{\B{K}} \Braid 
    \arrow[r, "{\Braid\otimes\zeta}"]
    & 	
    \Braid \otimes_{\B{K}}\Braid\otimes_{\B{K}}T(\partial)  
    \arrow[d, "{\mu_{\Braid}\otimes T(\partial)}"]
    \\
    T(\partial) \otimes_{\B{K}} \Braid  
    \arrow[rr, "\zeta"'] 
    & & 	
    \Braid\otimes_{\B{K}} T(\partial)
  \end{tikzcd}\]
We check if this is well-defined, i.e., if $\zeta (T(\partial) \otimes_{\B{K}} \mathcal{I}_{\Braid }) \subset \mathcal{I}_{\Braid } \otimes_{\B{K}} T(\partial) $. For this, we have to check that the defining relations of $\mathcal{I}_{\Braid}$ are preserved.

We check the first relation in $\mathcal{I}_{\Braid}$, namely,
\[ \zeta (\partial_i^n \otimes \chi_j^n \chi_k^n) = \zeta (\partial_i^n \otimes \chi_k^n \chi_j^n) \]
in $\Braid \otimes_{\B{K}} T(\partial) $. We need to verify this equality for all $0\leq i \leq n$ and $0\leq j,k \leq n-1$ with $|j-k|\geq 2$. We examine this in $7$ cases as follows:

\begin{enumerate}[{Case} (1):]
\item  $i<j$ and $i < k$: 
  \begin{align*}
    \zeta (\partial_i^n \otimes \chi_j^n\chi_k^n)
    = & \chi_{j+1}^{n+1} \chi_{k+1}^{n+1} \otimes \partial_i^n \\
    = & \chi_{k+1}^{n+1} \chi_{j+1}^{n+1} \otimes \partial_i^n 
        = \zeta (\partial_i^n \otimes \chi_k^n\chi_j^n)
  \end{align*}
  since $|(j+1)-(k+1)|=|j-k|\geq 2$.
  
\item $i<j$ and $i=k$. Since $|(j+1)-(i+1)|=|(j+1)-(k+1)|=|j-k|=|j-i| \geq 2$ and clearly $ |(j+1)-i| \geq |j-i|$, we get
  \begin{align*}
    \zeta (\partial_i^n \otimes \chi_j^n\chi_i^n)
    = & \chi_{j+1}^{n+1} \chi_{i+1}^{n+1} \chi_{i}^{n+1} \otimes \partial_{i+1}^n\\
    = & \chi_{i+1}^{n+1} \chi_{i}^{n+1} \chi_{j+1}^{n+1} \otimes \partial_{i+1}^n
        = \zeta (\partial_i^n \otimes \chi_i^n\chi_j^n)
  \end{align*}
  We can depict the equality above with the string diagrams below. 
  \[ \begin{tabular}{c c}
    \begin{tikzpicture}[xscale=0.7, yscale=0.5]
      \begin{knot}[
        clip width=5,
        clip radius=8pt,
        ]
        
        \node at (-1, -0.5)  {\scalebox{0.7}{$i$}} ; 
        \node at (1 , -0.5) {\scalebox{0.7}{$i+1$}} ; 
        \node at (4 , -0.5) {\scalebox{0.7}{$j$}} ; 
        \node at (5 , -0.5) {\scalebox{0.7}{$j+1$}} ; 
        \node at (-1, 5.3 )  {\scalebox{0.7}{$i$}} ; 
        \node at (0 , 5.3) {\scalebox{0.7}{$i+1$}} ; 
        \node at (2 , 5.3) {\scalebox{0.7}{$i+2$}} ; 
        \node at (4 , 5.3) {\scalebox{0.6}{$j+1$}} ; 
        \node at (5 , 5.3) {\scalebox{0.6}{$j+2$}} ; 
        \node at (3 , 2.65) {\scalebox{1}{$\,\cdots$}} ;
        \node at (6.5, 2.65)  {\scalebox{1}{$=$}} ;
        Strand at (2, 4.8)
        \strand[thick] (-1, 0.0) to (-1, 0.27) to (-1, 0.93) to (-1, 1.2) to (-1, 1.47) to (0, 2.13) to (0, 2.4) to (0, 2.67) to (2, 3.33) to (2, 3.6) to (2, 3.87) to (2, 4.53) to (2, 4.8) ;
        
        Strand at (-1, 4.8)
        \strand[thick] (1, 0.27) to (0, 0.93) to (0, 1.2) to (0, 1.47) to (-1, 2.13) to (-1, 2.4) to (-1, 2.67) to (-1, 3.33) to (-1, 3.6) to (-1, 3.87) to (-1, 4.53) to (-1, 4.8) ;
        
        Strand at (0, 4.8)
        \strand[thick] (1, 0.0) to (1, 0.27) to (2, 0.93) to (2, 1.2) to (2, 1.47) to (2, 2.13) to (2, 2.4) to (2, 2.67) to (0, 3.33) to (0, 3.6) to (0, 3.87) to (0, 4.53) to (0, 4.8) ;
        
        Strand at (5, 4.8)
        \strand[thick] (4, 0.0) to (4, 0.27) to (4, 0.93) to (4, 1.2) to (4, 1.47) to (4, 2.13) to (4, 2.4) to (4, 2.67) to (4, 3.33) to (4, 3.6) to (4, 3.87) to (5, 4.53) to (5, 4.8);
        
        Strand at (4, 4.8)
        \strand[thick] (5, 0.0) to (5, 0.27) to (5, 0.93) to (5, 1.2) to (5, 1.47) to (5, 2.13) to (5, 2.4) to (5, 2.67) to (5, 3.33) to (5, 3.6) to (5, 3.87) to (4, 4.53) to (4, 4.8);
        
      \end{knot} 
    \end{tikzpicture}
    & \begin{tikzpicture}[xscale=0.7, yscale=0.5]	
      \begin{knot}[
        clip width=5,
        clip radius=8pt,
        ]
        
        \node at (-1, -0.5)  {\scalebox{0.7}{$i$}} ; 
        \node at (1 , -0.5) {\scalebox{0.7}{$i+1$}} ; 
        \node at (4 , -0.5) {\scalebox{0.7}{$j$}} ; 
        \node at (5 , -0.5) {\scalebox{0.7}{$j+1$}} ; 
        \node at (-1, 5.3 )  {\scalebox{0.7}{$i$}} ; 
        \node at (0 , 5.3) {\scalebox{0.7}{$i+1$}} ; 
        \node at (2 , 5.3) {\scalebox{0.7}{$i+2$}} ; 
        \node at (4 , 5.3) {\scalebox{0.6}{$j+1$}} ; 
        \node at (3 , 2.65) {\scalebox{1}{$\,\cdots$}} ;
        \node at (5 , 5.3) {\scalebox{0.6}{$j+2$}} ; 
        Strand at (2, 4.8)
        \strand[thick] (-1, 0.0) to (-1, 0.27) to (-1, 0.93) to (-1, 1.2) to (-1, 1.47) to (-1, 2.13) to (-1, 2.4) to (-1, 2.67) to (0, 3.33) to (0, 3.6) to (0, 3.87) to (2, 4.53) to (2, 4.8) ;
        
        Strand at (-1, 4.8)
        \strand[thick] (1, 0.27) to (0, 0.93) to (0, 1.2) to (0, 1.47) to (0, 2.13) to (0, 2.4) to (0, 2.67) to (-1, 3.33) to (-1, 3.6) to (-1, 3.87) to (-1, 4.53) to (-1, 4.8) ;
        
        Strand at (0, 4.8)
        \strand[thick] (1, 0.0) to (1, 0.27) to (2, 0.93) to (2, 1.2) to (2, 1.47) to (2, 2.13) to (2, 2.4) to (2, 2.67) to (2, 3.33) to (2, 3.6) to (2, 3.87) to (0, 4.53) to (0, 4.8) ;
        
        Strand at (5, 4.8)
        \strand[thick] (4, 0.0) to (4, 0.27) to (4, 0.93) to (4, 1.2) to (4, 1.47) to (5, 2.13) to (5, 2.4) to (5, 2.67) to (5, 3.33) to (5, 3.6) to (5, 3.87) to (5, 4.53) to (5, 4.8);
        
        Strand at (4, 4.8)
        \strand[thick] (5, 0.0) to (5, 0.27) to (5, 0.93) to (5, 1.2) to (5, 1.47) to (4, 2.13) to (4, 2.4) to (4, 2.67) to (4, 3.33) to (4, 3.6) to (4, 3.87) to (4, 4.53) to (4, 4.8);
      \end{knot} 
    \end{tikzpicture}
  \end{tabular}
\]
Since all derivations are reversible, this case is equivalent to the case where $i<k$ and $i=j$.

\item  $i<j$ and $i=k+1$ (or $k=i-1$.) Conditions imply that $k < i < j$, therefore $(j+1)-i= (j+1)-(k+1)=|j-k| \geq 2$ and clearly $ (j+1)-(i-1)= ((j+1)-i) + 1 \geq 3  $. So, we have
  \begin{align*}
    \zeta (\partial_i^n \otimes \chi_j^n\chi_{i-1}^n)
    = & \chi_{j+1}^{n+1} \chi_{i-1}^{n+1} \chi_{i}^{n+1} \otimes \partial_{i-1}^n  \\
    = & \chi_{i-1}^{n+1} \chi_{i}^{n+1} \chi_{j+1}^{n+1} \otimes \partial_{i-1}^n 
        = \zeta (\partial_i^n \otimes \chi_{i-1}^n\chi_j^n)
  \end{align*}
  One can depict the equality above by the following string diagrams:
  \[\begin{tabular}{c c}
      \begin{tikzpicture}[xscale=0.7, yscale=0.5]
        
        \begin{knot}[
          clip width=5,
          clip radius=8pt,
          ]
          
          \node at (0, -0.5)  {\scalebox{0.7}{$i-1$}} ; 
          \node at (2 , -0.5) {\scalebox{0.7}{$i$}} ;  
          \node at (4 , -0.5) {\scalebox{0.7}{$j$}} ; 
          \node at (5 , -0.5) {\scalebox{0.7}{$j+1$}} ; 
          \node at (3 , 2.65) {\scalebox{1}{$\,\cdots$}} ;
          \node at (-1, 5.3 )  {\scalebox{0.7}{$i-1$}} ; 
          \node at (2 , 5.3) {\scalebox{0.7}{$i+1$}} ; 
          \node at (1 , 5.3) {\scalebox{0.7}{$i$}} ; 
          \node at (4 , 5.3) {\scalebox{0.6}{$j+1$}} ; 
          \node at (5 , 5.3) {\scalebox{0.6}{$j+2$}} ;
          \node at (6.5, 2.65)  {\scalebox{1}{$=$}} ;
          Strand at (1, 4.8)
          \strand[thick] (0, 0.27) to (-1, 0.93) to (-1, 1.2) to (-1, 1.47) to (-1, 2.13) to (-1, 2.4) to (-1, 2.67) to (1, 3.33) to (1, 3.6) to (1, 3.87) to (1, 4.53) to (1, 4.8) ;
          
          Strand at (2, 4.8)
          \strand[thick] (0, 0.0) to (0, 0.27) to (1, 0.93) to (1, 1.2) to (1, 1.47) to (2, 2.13) to (2, 2.4) to (2, 2.67) to (2, 3.33) to (2, 3.6) to (2, 3.87) to (2, 4.53) to (2, 4.8) ;
          
          Strand at (-1, 4.8)
          \strand[thick] (2, 0.0) to (2, 0.27) to (2, 0.93) to (2, 1.2) to (2, 1.47) to (1, 2.13) to (1, 2.4) to (1, 2.67) to (-1, 3.33) to (-1, 3.6) to (-1, 3.87) to (-1, 4.53) to (-1, 4.8) ;

          Strand at (6, 4.8)
          \strand[thick] (4, 0.0) to (4, 0.27) to (4, 0.93) to (4, 1.2) to (4, 1.47) to (4, 2.13) to (4, 2.4) to (4, 2.67) to (4, 3.33) to (4, 3.6) to (4, 3.87) to (5, 4.53) to (5, 4.8);
          
          Strand at (5, 4.8)
          \strand[thick] (5, 0.0) to (5, 0.27) to (5, 0.93) to (5, 1.2) to (5, 1.47) to (5, 2.13) to (5, 2.4) to (5, 2.67) to (5, 3.33) to (5, 3.6) to (5, 3.87) to (4, 4.53) to (4, 4.8);
          
        \end{knot} 
      \end{tikzpicture} 
      & \begin{tikzpicture}[xscale=0.7, yscale=0.5]
        
        \begin{knot}[
          clip width=5,
          clip radius=8pt,
          ]
          
          \node at (0, -0.5)  {\scalebox{0.7}{$i-1$}} ; 
          \node at (2 , -0.5) {\scalebox{0.7}{$i$}} ;  
          \node at (4 , -0.5) {\scalebox{0.7}{$j$}} ; 
          \node at (5 , -0.5) {\scalebox{0.7}{$j+1$}} ; 
          \node at (3 , 2.65) {\scalebox{1}{$\,\cdots$}} ;
          \node at (-1, 5.3 )  {\scalebox{0.7}{$i-1$}} ; 
          \node at (2 , 5.3) {\scalebox{0.7}{$i+1$}} ; 
          \node at (1 , 5.3) {\scalebox{0.7}{$i$}} ; 
          \node at (4 , 5.3) {\scalebox{0.7}{$j+1$}} ; 
          \node at (5 , 5.3) {\scalebox{0.7}{$j+2$}} ; 
          
          Strand at (1, 4.8)
          \strand[thick] (0, 0.27) to (-1, 0.93) to (-1, 1.2) to (-1, 1.47) to (-1, 2.13) to (-1, 2.4) to (-1, 2.67) to (-1, 3.33) to (-1, 3.6) to (-1, 3.87) to (1, 4.53) to (1, 4.8) ;
          
          Strand at (2, 4.8)
          \strand[thick] (0, 0.0) to (0, 0.27) to (1, 0.93) to (1, 1.2) to (1, 1.47) to (1, 2.13) to (1, 2.4) to (1, 2.67) to (2, 3.33) to (2, 3.6) to (2, 3.87) to (2, 4.53) to (2, 4.8) ;
          
          Strand at (-1, 4.8)
          \strand[thick] (2, 0.0) to (2, 0.27) to (2, 0.93) to (2, 1.2) to (2, 1.47) to (2, 2.13) to (2, 2.4) to (2, 2.67) to (1, 3.33) to (1, 3.6) to (1, 3.87) to (-1, 4.53) to (-1, 4.8) ;

          Strand at (5, 4.8)
          \strand[thick] (4, 0.0) to (4, 0.27) to (4, 0.93) to (4, 1.2) to (4, 1.47) to (5, 2.13) to (5, 2.4) to (5, 2.67) to (5, 3.33) to (5, 3.6) to (5, 3.87) to (5, 4.53) to (5, 4.8);
          
          Strand at (4, 4.8)
          \strand[thick] (5, 0.0) to (5, 0.27) to (5, 0.93) to (5, 1.2) to (5, 1.47) to (4, 2.13) to (4, 2.4) to (4, 2.67) to (4, 3.33) to (4, 3.6) to (4, 3.87) to (4, 4.53) to (4, 4.8);
         
        \end{knot}     
      \end{tikzpicture} 
    \end{tabular}
  \]
  As before, since all derivations are reversible, this case is equivalent to the case where $i<k$ and $i=j+1$.
  
\item $i<j$ and $i>k+1$. Since $k < i < j$ we have $j-k \geq 2 $, and therefore,  $(j+1) - k \geq 2$. Then $\chi_{j+1}^{n+1} \chi_{k}^{n+1}=\chi_{k}^{n+1} \chi_{j+1}^{n+1}$ which implies
  \begin{align*}
    \zeta (\partial_i^n \otimes \chi_j^n\chi_k^n)
    = & \chi_{j+1}^{n+1} \chi_{k}^{n+1} \otimes \partial_i^n \\
    = & \chi_{k}^{n+1} \chi_{j+1}^{n+1} \otimes \partial_i^n 
        = \zeta (\partial_i^n \otimes \chi_k^n\chi_j^n)
  \end{align*}
  
\item $i=j$ and $i > k+1$.  Since $k < k+1 < i =j $ we have $|(i+1)-k|=|(j+1)-k|=j-k+1 \geq 2$ and clearly $ |i-k|=|j-k| \geq 2$. Then
  \begin{align*}
    \zeta (\partial_i^n \otimes \chi_i^n\chi_k^n)
    = & \chi_{i+1}^{n+1} \chi_{i}^{n+1} \chi_{k}^{n+1} \otimes \partial_{i+1}^n \\
    = & \chi_{k}^{n+1}\chi_{i+1}^{n+1} \chi_{i}^{n+1} \otimes \partial_{i+1}^n  
        = \zeta (\partial_i^n \otimes \chi_k^n\chi_i^n)
  \end{align*}
  We can represent the equality above with the string diagram below:
  \[\begin{tabular}{c c}		
    \begin{tikzpicture}[xscale=0.7, yscale=0.5]
      
      \begin{knot}[
        clip width=5,
        clip radius=8pt,
        ]
        
        \node at (-1, -0.5)  {\scalebox{0.7}{$k$}} ; 
        \node at (0 , -0.5) {\scalebox{0.7}{$k+1$}} ; 
        \node at (2 , -0.5) {\scalebox{0.7}{$i$}} ; 
        \node at (4 , -0.5) {\scalebox{0.7}{$i+1$}} ; 
        \node at (-1, 5.3 )  {\scalebox{0.7}{$k$}} ; 
        \node at (0 , 5.3) {\scalebox{0.7}{$k+1$}} ; 
        \node at (2 , 5.3) {\scalebox{0.7}{$i$}} ; 
        \node at (3 , 5.3) {\scalebox{0.7}{$i+1$}} ; 
        \node at (5 , 5.3) {\scalebox{0.7}{$i+2$}} ; 
        \node at (1 , 2.65) {\scalebox{1}{$\,\cdots$}} ;
        \node at (6.5, 2.65)  {\scalebox{1}{$=$}} ;
        Strand at (2, 4.8)
        \strand[thick] (2, 0.0) to (2, 0.27) to (2, 0.93) to (2, 1.2) to (2, 1.47) to (2, 2.13) to (2, 2.4) to (2, 2.67) to (3, 3.33) to (3, 3.6) to (3, 3.87) to (5, 4.53) to (5, 4.8);
        
        Strand at (2, 4.8)
        \strand[thick] (4, 0.27) to (3, 0.93) to (3, 1.2) to (3, 1.47) to (3, 2.13) to (3, 2.4) to (3, 2.67) to (2, 3.33) to (2, 3.6) to (2, 3.87) to (2, 4.53) to (2, 4.8);
        
        Strand at (3, 4.8)
        \strand[thick] (4, 0.0) to (4, 0.27) to (5, 0.93) to (5, 1.2) to (5, 1.47) to (5, 2.13) to (5, 2.4) to (5, 2.67) to (5, 3.33) to (5, 3.6) to (5, 3.87) to (3, 4.53) to (3, 4.8);

        Strand at (0, 4.8)
        \strand[thick] (-1, 0.0) to (-1, 0.27) to (-1, 0.93) to (-1, 1.2) to (-1, 1.47) to (0, 2.13) to (0, 2.4) to (0, 2.67) to (0, 3.33) to (0, 3.6) to (0, 3.87) to (0, 4.53) to (0, 4.8);
        
        Strand at (-1, 4.8)
        \strand[thick] (0, 0.0) to (0, 0.27) to (0, 0.93) to (0, 1.2) to (0, 1.47) to (-1, 2.13) to (-1, 2.4) to (-1, 2.67) to (-1, 3.33) to (-1, 3.6) to (-1, 3.87) to (-1, 4.53) to (-1, 4.8);
      \end{knot} 
    \end{tikzpicture}
    & \begin{tikzpicture}[xscale=0.7, yscale=0.5]		
      \begin{knot}[
        clip width=5,
        clip radius=8pt,
        ]
        
        \node at (-1, -0.5)  {\scalebox{0.7}{$k$}} ; 
        \node at (0 , -0.5) {\scalebox{0.7}{$k+1$}} ; 
        \node at (2 , -0.5) {\scalebox{0.7}{$i$}} ; 
        \node at (4 , -0.5) {\scalebox{0.7}{$i+1$}} ; 
        \node at (-1, 5.3 )  {\scalebox{0.7}{$k$}} ; 
        \node at (0 , 5.3) {\scalebox{0.7}{$k+1$}} ; 
        \node at (2 , 5.3) {\scalebox{0.7}{$i$}} ; 
        \node at (3 , 5.3) {\scalebox{0.7}{$i+1$}} ; 
        \node at (5 , 5.3) {\scalebox{0.7}{$i+2$}} ; 
        \node at (1 , 2.65) {\scalebox{1}{$\,\cdots$}} ;
        Strand at (2, 4.8)
        \strand[thick] (2, 0.0) to (2, 0.27) to (2, 0.93) to (2, 1.2) to (2, 1.47) to (3, 2.13) to (3, 2.4) to (3, 2.67) to (5, 3.33) to (5, 3.6) to (5, 3.87) to (5, 4.53) to (5, 4.8);
        
        Strand at (2, 4.8)
        \strand[thick] (4, 0.27) to (3, 0.93) to (3, 1.2) to (3, 1.47) to (2, 2.13) to (2, 2.4) to (2, 2.67) to (2, 3.33) to (2, 3.6) to (2, 3.87) to (2, 4.53) to (2, 4.8);
        
        Strand at (3, 4.8)
        \strand[thick] (4, 0.0) to (4, 0.27) to (5, 0.93) to (5, 1.2) to (5, 1.47) to (5, 2.13) to (5, 2.4) to (5, 2.67) to (3, 3.33) to (3, 3.6) to (3, 3.87) to (3, 4.53) to (3, 4.8);
        
        Strand at (0, 4.8)
        \strand[thick] (-1, 0.0) to (-1, 0.27) to (-1, 0.93) to (-1, 1.2) to (-1, 1.47) to (-1, 2.13) to (-1, 2.4) to (-1, 2.67) to (-1, 3.33) to (-1, 3.6) to (-1, 3.87) to (0, 4.53) to (0, 4.8);
        
        Strand at (-1, 4.8)
        \strand[thick] (0, 0.0) to (0, 0.27) to (0, 0.93) to (0, 1.2) to (0, 1.47) to (0, 2.13) to (0, 2.4) to (0, 2.67) to (0, 3.33) to (0, 3.6) to (0, 3.87) to (-1, 4.53) to (-1, 4.8);	
      \end{knot} 
    \end{tikzpicture}
  \end{tabular}
\]
These derivations are reversible as before. Thus, this case is equivalent to the case where $i<k$ and $i>j+1$.	
		
\item $i=j+1$ (or $j=i-1$) and $i>k+1$. Since $i > j > k$ with $j-k \geq 2$ in this case, we have $i-k > (i-1)-k = j-k \geq 2$. Then
  \begin{align*}
    \zeta (\partial_i^n \otimes \chi_{i-1}^n\chi_{k}^n)
    = & \chi_{i-1}^{n+1} \chi_{i}^{n+1} \chi_{k}^{n+1} \otimes \partial_{i-1}^n  \\
    = & \chi_{k}^{n+1} \chi_{i-1}^{n+1} \chi_{i}^{n+1}  \otimes \partial_{i-1}^n
        = \zeta (\partial_i^n \otimes \chi_k^n\chi_{i-1}^n)
  \end{align*}
  The string diagram depicting the equality above is as follows:	
  \[\begin{tabular}{c c}
    
    \begin{tikzpicture}[xscale=0.7, yscale=0.5]
      \begin{knot}[
        clip width=5,
        clip radius=8pt,
        ]
        
        \node at (-1, -0.5)  {\scalebox{0.7}{$k$}} ; 
        \node at (0 , -0.5) {\scalebox{0.7}{$k+1$}} ; 
        \node at (3, -0.5) {\scalebox{0.7}{$i-1$}} ; 
        \node at (5 , -0.5) {\scalebox{0.7}{$i$}} ; 
        \node at (-1, 5.3 )  {\scalebox{0.7}{$k$}} ; 
        \node at (0 , 5.3) {\scalebox{0.7}{$k+1$}} ; 
        \node at (2 , 5.3) {\scalebox{0.7}{$i-1$}} ; 
        \node at (4 , 5.3) {\scalebox{0.7}{$i$}} ; 
        \node at (5 , 5.3) {\scalebox{0.7}{$i+1$}} ; 
        \node at (1 , 2.65) {\scalebox{1}{$\,\cdots$}} ; 
        \node at (6.5, 2.65)  {\scalebox{1}{$=$}} ;
        
        Strand at (4, 4.8)
        \strand[thick] (3, 0.27) to (2, 0.93) to (2, 1.2) to (2, 1.47) to (2, 2.13) to (2, 2.4) to (2, 2.67) to (2, 3.33) to (2, 3.6) to (2, 3.87) to (4, 4.53) to (4, 4.8) ;
        
        Strand at (5, 4.8)
        \strand[thick] (3, 0.0) to (3, 0.27) to (4, 0.93) to (4, 1.2) to (4, 1.47) to (4, 2.13) to (4, 2.4) to (4, 2.67) to (5, 3.33) to (5, 3.6) to (5, 3.87) to (5, 4.53) to (5, 4.8) ;
        
        Strand at (1, 4.8)
        \strand[thick] (5, 0.0) to (5, 0.27) to (5, 0.93) to (5, 1.2) to (5, 1.47) to (5, 2.13) to (5, 2.4) to (5, 2.67) to (4, 3.33) to (4, 3.6) to (4, 3.87) to (2, 4.53) to (2, 4.8) ;
        
        Strand at (0, 4.8)
        \strand[thick] (-1, 0.0) to (-1, 0.27) to (-1, 0.93) to (-1, 1.2) to (-1, 1.47) to (0, 2.13) to (0, 2.4) to (0, 2.67) to (0, 3.33) to (0, 3.6) to (0, 3.87) to (0, 4.53) to (0, 4.8);
        
        Strand at (-1, 4.8)
        \strand[thick] (0, 0.0) to (0, 0.27) to (0, 0.93) to (0, 1.2) to (0, 1.47) to (-1, 2.13) to (-1, 2.4) to (-1, 2.67) to (-1, 3.33) to (-1, 3.6) to (-1, 3.87) to (-1, 4.53) to (-1, 4.8);				
      \end{knot} 				
    \end{tikzpicture}
    & \begin{tikzpicture}[xscale=0.7, yscale=0.5]	
      \begin{knot}[
        clip width=5,
        clip radius=8pt,
        ]
        
        \node at (-1, -0.5)  {\scalebox{0.7}{$k$}} ; 
        \node at (0 , -0.5) {\scalebox{0.7}{$k+1$}} ; 
        \node at (3, -0.5) {\scalebox{0.7}{$i-1$}} ; 
        \node at (5 , -0.5) {\scalebox{0.7}{$i$}} ; 
        \node at (-1, 5.3 )  {\scalebox{0.7}{$k$}} ; 
        \node at (0 , 5.3) {\scalebox{0.7}{$k+1$}} ; 
        \node at (2 , 5.3) {\scalebox{0.7}{$i-1$}} ; 
        \node at (4 , 5.3) {\scalebox{0.7}{$i$}} ; 
        \node at (5 , 5.3) {\scalebox{0.7}{$i+1$}} ; 
        \node at (1 , 2.65) {\scalebox{1}{$\,\cdots$}} ; 
        
        Strand at (4, 4.8)
        \strand[thick] (3, 0.27) to (2, 0.93) to (2, 1.2) to (2, 1.47) to (2, 2.13) to (2, 2.4) to (2, 2.67) to (4, 3.33) to (4, 3.6) to (4, 3.87) to (4, 4.53) to (4, 4.8) ;
        
        Strand at (5, 4.8)
        \strand[thick] (3, 0.0) to (3, 0.27) to (4, 0.93) to (4, 1.2) to (4, 1.47) to (5, 2.13) to (5, 2.4) to (5, 2.67) to (5, 3.33) to (5, 3.6) to (5, 3.87) to (5, 4.53) to (5, 4.8) ;
        
        Strand at (1, 4.8)
        \strand[thick] (5, 0.0) to (5, 0.27) to (5, 0.93) to (5, 1.2) to (5, 1.47) to (4, 2.13) to (4, 2.4) to (4, 2.67) to (2, 3.33) to (2, 3.6) to (2, 3.87) to (2, 4.53) to (2, 4.8) ;

        Strand at (1, 4.8)
        \strand[thick] (-1, 0.0) to (-1, 0.27) to (-1, 0.93) to (-1, 1.2) to (-1, 1.47) to (-1, 2.13) to (-1, 2.4) to (-1, 2.67) to (-1, 3.33) to (-1, 3.6) to (-1, 3.87) to (0, 4.53) to (0, 4.8);
        
        Strand at (0, 4.8)
        \strand[thick] (0, 0.0) to (0, 0.27) to (0, 0.93) to (0, 1.2) to (0, 1.47) to (0, 2.13) to (0, 2.4) to (0, 2.67) to (0, 3.33) to (0, 3.6) to (0, 3.87) to (-1, 4.53) to (-1, 4.8);
      \end{knot} 
    \end{tikzpicture} 
      
  \end{tabular}
\]
This case is equivalent to the case where  $i>j+1$ and $i=k+1$ since the derivations are reversible.
		
\item $i>j+1$ and $i>k+1$
  \begin{align*}
    \zeta (\partial_i^n \otimes \chi_j^n\chi_k^n)
    = & \chi_{j}^{n+1} \chi_{k}^{n+1} \otimes \partial_i^n \\
    = & \chi_{k}^{n+1} \chi_{j}^{n+1} \otimes \partial_i^n 
        = \zeta (\partial_i^n \otimes \chi_k^n\chi_j^n)
  \end{align*}
\end{enumerate}

We have to check the remaining relation in $\mathcal{I}_{\Braid}$, namely,
\[ \zeta (\partial_i^n \otimes \chi_j^n \chi_{j+1}^n \chi_j^n) = \zeta (\partial_i^n \otimes \chi_{j+1}^n \chi_j^n \chi_{j+1}^n) \]
in $\Braid \otimes_{\B{K}} T(\partial) $. We need to verify this equality for all $0\leq i \leq n$ and $0\leq j \leq n-2$ where $n \geq 2$. We examine this in $5$ cases as follows:

\begin{enumerate}[{Case} (1):]
\item $i<j$
  \begin{align*}
    \zeta (\partial_i^n \otimes \chi_j^n \chi_{j+1}^n \chi_j^n)
    = & \chi_{j+1}^{n+1} \chi_{j+2}^{n+1} \chi_{j+1}^{n+1}\otimes \partial_i^n\\
    = & \chi_{j+2}^{n+1} \chi_{j+1}^{n+1} \chi_{j+2}^{n+1} \otimes \partial_i^n
        = \zeta (\partial_i^n \otimes \chi_{j+1}^n \chi_j^n \chi_{j+1}^n)
  \end{align*}
  
\item $i>j+2$
  \begin{align*}
    \zeta (\partial_i^n \otimes \chi_j^n \chi_{j+1}^n \chi_j^n)
    = & \chi_j^{n+1} \chi_{j+1}^{n+1} \chi_j^{n+1} \otimes \partial_i^n \\
    = & \chi_{j+1}^{n+1} \chi_j^{n+1} \chi_{j+1}^{n+1} \otimes \partial_i^n
        = \zeta (\partial_i^n \otimes \chi_{j+1}^n \chi_j^n \chi_{j+1}^n)
  \end{align*}
  
\item $i=j+2$. We have 
  \begin{align*}
    \zeta (\partial_{j+2}^n \otimes \chi_{j+1}^n \chi_j^n \chi_{j+1}^n)
    = & \chi_{j+1}^{n+1} \chi_{j+2}^{n+1}  \chi_j^{n+1}  \chi_{j+1}^{n+1}  \chi_{j+2}^{n+1} \otimes \partial_{j}^n
  \end{align*}
  On the other hand, we have
  \begin{align*}
    \chi_{j+1}^{n+1} \chi_{j+2}^{n+1}\chi_j^{n+1}  \chi_{j+1}^{n+1}\chi_{j+2}^{n+1}  
    = & \chi_{j+1}^{n+1} \chi_{j}^{n+1}  \chi_{j+2}^{n+1} \chi_{j+1}^{n+1}  \chi_{j+2}^{n+1} \\
    = & \chi_{j+1}^{n+1} \chi_{j}^{n+1}  \chi_{j+1}^{n+1} \chi_{j+2}^{n+1}  \chi_{j+1}^{n+1} 
			= \chi_{j}^{n+1} \chi_{j+1}^{n+1}  \chi_{j+2}^{n+1} \chi_{j}^{n+1} \chi_{j+1}^{n+1}
  \end{align*}
  Thus
  \begin{align*}
    \chi_{j+1}^{n+1} \chi_{j+2}^{n+1}  \chi_j^{n+1}  \chi_{j+1}^{n+1}  \chi_{j+2}^{n+1} \otimes \partial_{j}^n
    = & \chi_j^{n+1}  \chi_{j+1}^{n+1} \chi_{j+2}^{n+1} \chi_{j}^{n+1} \chi_{j+1}^{n+1}  \otimes \partial_{j}^n\\
    = & \zeta (\partial_{j+2}^n \otimes \chi_j^n \chi_{j+1}^n \chi_j^n) 
  \end{align*}
  These identities can be seen more easily from the following diagrams:
  \[ \begin{tabular}{cccc}
    \begin{tikzpicture}[xscale=0.5, yscale=0.5]
      
      \begin{knot}[
        clip width=5,
        clip radius=8pt,
        ]
        
        \node at (-1, -0.5)  {\scalebox{0.5}{$j$}} ; 
        \node at (0 , -0.5) {\scalebox{0.5}{$j+1$}} ; 
        \node at (2 , -0.5) {\scalebox{0.5}{$j+2$}} ; 
        \node at (-1, 5.3 )  {\scalebox{0.5}{$j$}} ; 
        \node at (0 , 5.3) {\scalebox{0.5}{$j+1$}} ; 
        \node at (1 , 5.3) {\scalebox{0.5}{$j+2$}} ; 
        \node at (3 , 5.3) {\scalebox{0.5}{$j+3$}} ;
        \node at (4.5, 2.65)  {\scalebox{1}{$\Longrightarrow$}} ;
        \node at (4.5, 3.4)  {\scalebox{1}{$\zeta$}} ;
        Strand at (1, 4.8)
        \strand[thick] (2, 3.87) to (1, 4.53) to (1, 4.8) ;
        
        Strand at (3, 4.8)
        \strand[thick] (-1, 0.0) to (-1, 0.27) to (-1, 0.93) to (-1, 1.2) to (-1, 1.47) to (0, 2.13) to (0, 2.4) to (0, 2.67) to (2, 3.33) to (2, 3.6) to (2, 3.87) to (3, 4.53) to (3, 4.8) ;
        
        Strand at (0, 4.8)
        \strand[thick] (0, 0.0) to (0, 0.27) to (2, 0.93) to (2, 1.2) to (2, 1.47) to (2, 2.13) to (2, 2.4) to (2, 2.67) to (0, 3.33) to (0, 3.6) to (0, 3.87) to (0, 4.53) to (0, 4.8) ;
        
        Strand at (-1, 4.8)
        \strand[thick] (2, 0.0) to (2, 0.27) to (0, 0.93) to (0, 1.2) to (0, 1.47) to (-1, 2.13) to (-1, 2.4) to (-1, 2.67) to (-1, 3.33) to (-1, 3.6) to (-1, 3.87) to (-1, 4.53) to (-1, 4.8) ;
        
      \end{knot} 
      
    \end{tikzpicture} 
    & \begin{tikzpicture}[xscale=0.5, yscale=0.5]
      
      \begin{knot}[
        clip width=5,
        clip radius=8pt,
        ]
        
        \node at (0, -0.5)  {\scalebox{0.5}{$j$}} ; 
        \node at (2 , -0.5) {\scalebox{0.5}{$j+1$}} ; 
        \node at (3 , -0.5) {\scalebox{0.5}{$j+2$}} ; 
        \node at (-1, 7.7 )  {\scalebox{0.5}{$j$}} ; 
        \node at (1 , 7.7) {\scalebox{0.5}{$j+1$}} ; 
        \node at (2 , 7.7) {\scalebox{0.5}{$j+2$}} ; 
        \node at (3 , 7.7) {\scalebox{0.5}{$j+3$}} ;
        \node at (5 , 3.6) {\scalebox{1}{$=$}} ; 
        Strand at (2, 7.2)
        \strand[thick] (0, 0.27) to (-1, 0.93) to (-1, 1.2) to (-1, 1.47) to (-1, 2.13) to (-1, 2.4) to (-1, 2.67) to (-1, 3.33) to (-1, 3.6) to (-1, 3.87) to (1, 4.53) to (1, 4.8) to (1, 5.07) to (1, 5.73) to (1, 6.0) to (1, 6.27) to (2, 6.93) to (2, 7.2) ;
        
        Strand at (3, 7.2)
        \strand[thick] (0, 0.0) to (0, 0.27) to (1, 0.93) to (1, 1.2) to (1, 1.47) to (1, 2.13) to (1, 2.4) to (1, 2.67) to (2, 3.33) to (2, 3.6) to (2, 3.87) to (2, 4.53) to (2, 4.8) to (2, 5.07) to (3, 5.73) to (3, 6.0) to (3, 6.27) to (3, 6.93) to (3, 7.2) ;
        
        Strand at (1, 7.2)
        \strand[thick] (2, 0.0) to (2, 0.27) to (2, 0.93) to (2, 1.2) to (2, 1.47) to (3, 2.13) to (3, 2.4) to (3, 2.67) to (3, 3.33) to (3, 3.6) to (3, 3.87) to (3, 4.53) to (3, 4.8) to (3, 5.07) to (2, 5.73) to (2, 6.0) to (2, 6.27) to (1, 6.93) to (1, 7.2) ;
        
        Strand at (-1, 7.2)
        \strand[thick] (3, 0.0) to (3, 0.27) to (3, 0.93) to (3, 1.2) to (3, 1.47) to (2, 2.13) to (2, 2.4) to (2, 2.67) to (1, 3.33) to (1, 3.6) to (1, 3.87) to (-1, 4.53) to (-1, 4.8) to (-1, 5.07) to (-1, 5.73) to (-1, 6.0) to (-1, 6.27) to (-1, 6.93) to (-1, 7.2) ;
        
      \end{knot} 
      
    \end{tikzpicture}
    & \begin{tikzpicture}[xscale=0.5, yscale=0.5]
      
      \begin{knot}[
        clip width=5,
        clip radius=8pt,
        ]
        
        \node at (0, -0.5)  {\scalebox{0.5}{$j$}} ; 
        \node at (2 , -0.5) {\scalebox{0.5}{$j+1$}} ; 
        \node at (3 , -0.5) {\scalebox{0.5}{$j+2$}} ; 
        \node at (-1, 7.7 )  {\scalebox{0.5}{$j$}} ; 
        \node at (1 , 7.7) {\scalebox{0.5}{$j+1$}} ; 
        \node at (2 , 7.7) {\scalebox{0.5}{$j+2$}} ; 
        \node at (3 , 7.7) {\scalebox{0.5}{$j+3$}} ; 
        \strand[thick] (0, 0.27) to (-1, 0.93) to (-1, 1.2) to (-1, 1.47) to (-1, 2.13) to (-1, 2.4) to (-1, 2.67) to (1, 3.33) to (1, 3.6) to (1, 3.87) to (1, 4.53) to (1, 4.8) to (1, 5.07) to (2, 5.73) to (2, 6.0) to (2, 6.27) to (2, 6.93) to (2, 7.2) ;
        
        \strand[thick] (0, 0.0) to (0, 0.27) to (1, 0.93) to (1, 1.2) to (1, 1.47) to (2, 2.13) to (2, 2.4) to (2, 2.67) to (2, 3.33) to (2, 3.6) to (2, 3.87) to (3, 4.53) to (3, 4.8) to (3, 5.07) to (3, 5.73) to (3, 6.0) to (3, 6.27) to (3, 6.93) to (3, 7.2) ;
        
        \strand[thick] (2, 0.0) to (2, 0.27) to (2, 0.93) to (2, 1.2) to (2, 1.47) to (1, 2.13) to (1, 2.4) to (1, 2.67) to (-1, 3.33) to (-1, 3.6) to (-1, 3.87) to (-1, 4.53) to (-1, 4.8) to (-1, 5.07) to (-1, 5.73) to (-1, 6.0) to (-1, 6.27) to (1, 6.93) to (1, 7.2) ;
        
        \strand[thick] (3, 0.0) to (3, 0.27) to (3, 0.93) to (3, 1.2) to (3, 1.47) to (3, 2.13) to (3, 2.4) to (3, 2.67) to (3, 3.33) to (3, 3.6) to (3, 3.87) to (2, 4.53) to (2, 4.8) to (2, 5.07) to (1, 5.73) to (1, 6.0) to (1, 6.27) to (-1, 6.93) to (-1, 7.2) ;
        
      \end{knot} 
      
    \end{tikzpicture}  &
                         
                         \begin{tikzpicture}[xscale=0.5, yscale=0.5]
                           
                           \begin{knot}[
                             clip width=5,
                             clip radius=8pt,
                             ]
                             
                             \node at (-1, -0.5)  {\scalebox{0.5}{$j$}} ; 
                             \node at (0 , -0.5) {\scalebox{0.5}{$j+1$}} ; 
                             \node at (2 , -0.5) {\scalebox{0.5}{$j+2$}} ; 
                             \node at (-1, 5.3 )  {\scalebox{0.5}{$j$}} ; 
                             \node at (0 , 5.3) {\scalebox{0.5}{$j+1$}} ; 
                             \node at (1 , 5.3) {\scalebox{0.5}{$j+2$}} ; 
                             \node at (3 , 5.3) {\scalebox{0.5}{$j+3$}} ; 
                             \node at (-2.5, 2.65)  {\scalebox{1}{$\Longleftarrow$}} ;
                             \node at (-2.5, 3.4)  {\scalebox{1}{$\zeta$}} ;
                             Strand at (1, 4.8)
                             \strand[thick] (2, 3.87) to (1, 4.53) to (1, 4.8) ;
                             
                             Strand at (3, 4.8)
                             \strand[thick] (-1, 0.0) to (-1, 0.27) to (0, 0.93) to (0, 1.2) to (0, 1.47) to (2, 2.13) to (2, 2.4) to (2, 2.67) to (2, 3.33) to (2, 3.6) to (2, 3.87) to (3, 4.53) to (3, 4.8) ;
                             
                             Strand at (0, 4.8)
                             \strand[thick] (0, 0.0) to (0, 0.27) to (-1, 0.93) to (-1, 1.2) to (-1, 1.47) to (-1, 2.13) to (-1, 2.4) to (-1, 2.67) to (0, 3.33) to (0, 3.6) to (0, 3.87) to (0, 4.53) to (0, 4.8) ;
                             
                             Strand at (-1, 4.8)
                             \strand[thick] (2, 0.0) to (2, 0.27) to (2, 0.93) to (2, 1.2) to (2, 1.47) to (0, 2.13) to (0, 2.4) to (0, 2.67) to (-1, 3.33) to (-1, 3.6) to (-1, 3.87) to (-1, 4.53) to (-1, 4.8) ;
                             
                           \end{knot} 
                           
                         \end{tikzpicture} 
  \end{tabular}
\]
\item $i=j+1$. We have 
  \begin{align*}
    \zeta (\partial_{j+1}^n \otimes \chi_{j+1}^n \chi_j^n \chi_{j+1}^n) 
    = & \chi_{j+2}^{n+1} \chi_{j+1}^{n+1} \chi_j^{n+1}  \chi_{j+1}^{n+1}  \chi_{j+2}^{n+1} \otimes \partial_{j+1}^n
  \end{align*}
  On the other hand we have 
  \begin{align*}
    \chi_{j+2}^{n+1} \chi_{j+1}^{n+1} \chi_j^{n+1}  \chi_{j+1}^{n+1}  \chi_{j+2}^{n+1} 
    = &  \chi_{j+2}^{n+1} \chi_{j}^{n+1} \chi_{j+1}^{n+1}  \chi_{j}^{n+1}  \chi_{j+2}^{n+1} \\
    = &  \chi_{j}^{n+1} \chi_{j+2}^{n+1} \chi_{j+1}^{n+1}  \chi_{j+2}^{n+1}  \chi_{j}^{n+1} 
        = \chi_{j}^{n+1} \chi_{j+1}^{n+1} \chi_{j+2}^{n+1} \chi_{j+1}^{n+1} \chi_{j}^{n+1}
  \end{align*}
  Thus
  \begin{align*}
    \chi_{j+2}^{n+1} \chi_{j+1}^{n+1} \chi_j^{n+1}  \chi_{j+1}^{n+1}  \chi_{j+2}^{n+1} \otimes \partial_{j+1}^n = & \chi_{j}^{n+1} \chi_{j+1}^{n+1} \chi_{j+2}^{n+1} \chi_{j+1}^{n+1} \chi_{j}^{n+1} \otimes \partial_{j+1}^n \\ = & \zeta (\partial_{j+1}^n \otimes \chi_{j}^n \chi_{j+1}^n \chi_{j}^n) 
  \end{align*}
  Diagrammatic interpretation is the following for this case:
  
  \[ \begin{tabular}{cccc}
    
    \begin{tikzpicture}[xscale=0.5, yscale=0.5]
      
      \begin{knot}[
        clip width=5,
        clip radius=8pt,
        ]
        
        \node at (-1, -0.5)  {\scalebox{0.5}{$j$}} ; 
        \node at (1 , -0.5) {\scalebox{0.5}{$j+1$}} ; 
        \node at (3 , -0.5) {\scalebox{0.5}{$j+2$}} ; 
        \node at (-1, 5.3 )  {\scalebox{0.5}{$j$}} ; 
        \node at (0 , 5.3) {\scalebox{0.5}{$j+1$}} ; 
        \node at (2 , 5.3) {\scalebox{0.5}{$j+2$}} ; 
        \node at (3 , 5.3) {\scalebox{0.5}{$j+3$}} ; 
        \node at (4.5, 2.65)  {\scalebox{1}{$\Longrightarrow$}} ;
        \node at (4.5, 3.4)  {\scalebox{1}{$\zeta$}} ;
        Strand at (3, 4.8)
        \strand[thick] (-1, 0.0) to (-1, 0.27) to (-1, 0.93) to (-1, 1.2) to (-1, 1.47) to (1, 2.13) to (1, 2.4) to (1, 2.67) to (3, 3.33) to (3, 3.6) to (3, 3.87) to (3, 4.53) to (3, 4.8) ;
        
        Strand at (0, 4.8)
        \strand[thick] (1, 3.87) to (0, 4.53) to (0, 4.8) ;
        
        Strand at (2, 4.8)
        \strand[thick] (1, 0.0) to (1, 0.27) to (3, 0.93) to (3, 1.2) to (3, 1.47) to (3, 2.13) to (3, 2.4) to (3, 2.67) to (1, 3.33) to (1, 3.6) to (1, 3.87) to (2, 4.53) to (2, 4.8) ;
        
        Strand at (-1, 4.8)
        \strand[thick] (3, 0.0) to (3, 0.27) to (1, 0.93) to (1, 1.2) to (1, 1.47) to (-1, 2.13) to (-1, 2.4) to (-1, 2.67) to (-1, 3.33) to (-1, 3.6) to (-1, 3.87) to (-1, 4.53) to (-1, 4.8) ;
        
      \end{knot} 
    \end{tikzpicture}
    & \begin{tikzpicture}[xscale=0.5, yscale=0.5]
      
      \begin{knot}[
        clip width=5,
        clip radius=8pt,
        ]
        
        \node at (-1, -0.5)  {\scalebox{0.5}{$j$}} ; 
        \node at (1 , -0.5) {\scalebox{0.5}{$j+1$}} ; 
        \node at (3 , -0.5) {\scalebox{0.5}{$j+2$}} ; 
        \node at (-1, 7.7 )  {\scalebox{0.5}{$j$}} ; 
        \node at (0 , 7.7) {\scalebox{0.5}{$j+1$}} ; 
        \node at (2 , 7.7) {\scalebox{0.5}{$j+2$}} ; 
        \node at (3 , 7.7) {\scalebox{0.5}{$j+3$}} ; 
        \node at (5 , 3.6) {\scalebox{1}{$=$}} ;
        Strand at (3, 7.2)
        \strand[thick] (-1, 0.0) to (-1, 0.27) to (-1, 0.93) to (-1, 1.2) to (-1, 1.47) to (-1, 2.13) to (-1, 2.4) to (-1, 2.67) to (-1, 3.33) to (-1, 3.6) to (-1, 3.87) to (0, 4.53) to (0, 4.8) to (0, 5.07) to (2, 5.73) to (2, 6.0) to (2, 6.27) to (3, 6.93) to (3, 7.2) ;
        
        Strand at (0, 7.2)
        \strand[thick] (1, 0.27) to (0, 0.93) to (0, 1.2) to (0, 1.47) to (0, 2.13) to (0, 2.4) to (0, 2.67) to (2, 3.33) to (2, 3.6) to (2, 3.87) to (2, 4.53) to (2, 4.8) to (2, 5.07) to (0, 5.73) to (0, 6.0) to (0, 6.27) to (0, 6.93) to (0, 7.2) ;
        
        Strand at (2, 7.2)
        \strand[thick] (1, 0.0) to (1, 0.27) to (2, 0.93) to (2, 1.2) to (2, 1.47) to (3, 2.13) to (3, 2.4) to (3, 2.67) to (3, 3.33) to (3, 3.6) to (3, 3.87) to (3, 4.53) to (3, 4.8) to (3, 5.07) to (3, 5.73) to (3, 6.0) to (3, 6.27) to (2, 6.93) to (2, 7.2) ;
        
        Strand at (-1, 7.2)
        \strand[thick] (3, 0.0) to (3, 0.27) to (3, 0.93) to (3, 1.2) to (3, 1.47) to (2, 2.13) to (2, 2.4) to (2, 2.67) to (0, 3.33) to (0, 3.6) to (0, 3.87) to (-1, 4.53) to (-1, 4.8) to (-1, 5.07) to (-1, 5.73) to (-1, 6.0) to (-1, 6.27) to (-1, 6.93) to (-1, 7.2) ;
      \end{knot} 
    \end{tikzpicture}
    & \begin{tikzpicture}[xscale=0.5, yscale=0.5]
      
      \begin{knot}[
        clip width=5,
        clip radius=8pt,
        ]
        
        \node at (-1, -0.5)  {\scalebox{0.5}{$j$}} ; 
        \node at (1 , -0.5) {\scalebox{0.5}{$j+1$}} ; 
        \node at (3 , -0.5) {\scalebox{0.5}{$j+2$}} ; 
        \node at (-1, 7.7 )  {\scalebox{0.5}{$j$}} ; 
        \node at (0 , 7.7) {\scalebox{0.5}{$j+1$}} ; 
        \node at (2 , 7.7) {\scalebox{0.5}{$j+2$}} ; 
        \node at (3 , 7.7) {\scalebox{0.5}{$j+3$}} ; 
        Strand at (3, 7.2)
        \strand[thick] (-1, 0.0) to (-1, 0.27) to (-1, 0.93) to (-1, 1.2) to (-1, 1.47) to (0, 2.13) to (0, 2.4) to (0, 2.67) to (2, 3.33) to (2, 3.6) to (2, 3.87) to (3, 4.53) to (3, 4.8) to (3, 5.07) to (3, 5.73) to (3, 6.0) to (3, 6.27) to (3, 6.93) to (3, 7.2) ;
        
        Strand at (0, 7.2)
        \strand[thick] (1, 0.27) to (0, 0.93) to (0, 1.2) to (0, 1.47) to (-1, 2.13) to (-1, 2.4) to (-1, 2.67) to (-1, 3.33) to (-1, 3.6) to (-1, 3.87) to (-1, 4.53) to (-1, 4.8) to (-1, 5.07) to (-1, 5.73) to (-1, 6.0) to (-1, 6.27) to (0, 6.93) to (0, 7.2) ;
        
        Strand at (2, 7.2)
        \strand[thick] (1, 0.0) to (1, 0.27) to (2, 0.93) to (2, 1.2) to (2, 1.47) to (2, 2.13) to (2, 2.4) to (2, 2.67) to (0, 3.33) to (0, 3.6) to (0, 3.87) to (0, 4.53) to (0, 4.8) to (0, 5.07) to (2, 5.73) to (2, 6.0) to (2, 6.27) to (2, 6.93) to (2, 7.2) ;
        
        Strand at (-1, 7.2)
        \strand[thick] (3, 0.0) to (3, 0.27) to (3, 0.93) to (3, 1.2) to (3, 1.47) to (3, 2.13) to (3, 2.4) to (3, 2.67) to (3, 3.33) to (3, 3.6) to (3, 3.87) to (2, 4.53) to (2, 4.8) to (2, 5.07) to (0, 5.73) to (0, 6.0) to (0, 6.27) to (-1, 6.93) to (-1, 7.2) ;
        
      \end{knot} 
      
    \end{tikzpicture}
    & \begin{tikzpicture}[xscale=0.5, yscale=0.5]
      
      \begin{knot}[
        clip width=5,
        clip radius=8pt,
        ]
        
        \node at (-1, -0.5)  {\scalebox{0.5}{$j$}} ; 
        \node at (1 , -0.5) {\scalebox{0.5}{$j+1$}} ; 
        \node at (3 , -0.5) {\scalebox{0.5}{$j+2$}} ; 
        \node at (-1, 5.3 )  {\scalebox{0.5}{$j$}} ; 
        \node at (0 , 5.3) {\scalebox{0.5}{$j+1$}} ; 
        \node at (2 , 5.3) {\scalebox{0.5}{$j+2$}} ; 
        \node at (3 , 5.3) {\scalebox{0.5}{$j+3$}} ; 
        \node at (-2.5, 2.65)  {\scalebox{1}{$\Longleftarrow$}} ;
        \node at (-2.5, 3.4)  {\scalebox{1}{$\zeta$}} ;
        Strand at (3, 4.8)
        \strand[thick] (-1, 0.0) to (-1, 0.27) to (1, 0.93) to (1, 1.2) to (1, 1.47) to (3, 2.13) to (3, 2.4) to (3, 2.67) to (3, 3.33) to (3, 3.6) to (3, 3.87) to (3, 4.53) to (3, 4.8) ;
        
        Strand at (0, 4.8)
        \strand[thick] (1, 3.87) to (0, 4.53) to (0, 4.8) ;
        
        Strand at (2, 4.8)
        \strand[thick] (1, 0.0) to (1, 0.27) to (-1, 0.93) to (-1, 1.2) to (-1, 1.47) to (-1, 2.13) to (-1, 2.4) to (-1, 2.67) to (1, 3.33) to (1, 3.6) to (1, 3.87) to (2, 4.53) to (2, 4.8) ;
        
        Strand at (-1, 4.8)
        \strand[thick] (3, 0.0) to (3, 0.27) to (3, 0.93) to (3, 1.2) to (3, 1.47) to (1, 2.13) to (1, 2.4) to (1, 2.67) to (-1, 3.33) to (-1, 3.6) to (-1, 3.87) to (-1, 4.53) to (-1, 4.8) ;
        
      \end{knot} 
      
    \end{tikzpicture} 
  \end{tabular}
\]
		
\item $i=j$ 
  \begin{align*}
    \zeta (\partial_{j}^n \otimes \chi_{j+1}^n \chi_j^n \chi_{j+1}^n) 
    = & \chi_{j+2}^{n+1} \chi_{j+1}^{n+1} \chi_j^{n+1}  \chi_{j+2}^{n+1}  \chi_{j+1}^{n+1} \otimes \partial_{j+2}^n
  \end{align*}
  On the other hand we have
  \begin{align*}
    \chi_{j+2}^{n+1} \chi_{j+1}^{n+1} \chi_j^{n+1}  \chi_{j+2}^{n+1}  \chi_{j+1}^{n+1}
    = & \chi_{j+2}^{n+1} \chi_{j+1}^{n+1}  \chi_{j+2}^{n+1} \chi_j^{n+1} \chi_{j+1}^{n+1} 
        = \chi_{j+1}^{n+1} \chi_{j+2}^{n+1}  \chi_{j+1}^{n+1} \chi_j^{n+1} \chi_{j+1}^{n+1}   \\
    = & \chi_{j+1}^{n+1} \chi_{j+2}^{n+1}  \chi_{j}^{n+1} \chi_{j+1}^{n+1} \chi_{j}^{n+1} 
        = \chi_{j+1}^{n+1} \chi_{j}^n \chi_{j+2}^{n+1} \chi_{j+1}^{n+1} \chi_{j}^n
  \end{align*}
  Thus
  \begin{align*}
    \chi_{j+2}^{n+1} \chi_{j+1}^{n+1} \chi_j^{n+1}  \chi_{j+2}^{n+1}  \chi_{j+1}^{n+1} \otimes \partial_{j+2}^n
    = & \chi_{j+1}^{n+1} \chi_{j}^n \chi_{j+2}^{n+1} \chi_{j+1}^{n+1} \chi_{j}^n \otimes \partial_{j+2}^n \\
    = & \zeta (\partial_{j}^n \otimes \chi_{j}^n \chi_{j+1}^n \chi_{j}^n) 
  \end{align*}
  
  \[ \begin{tabular}{c c c c}
    \begin{tikzpicture}[xscale=0.5, yscale=0.5]
      
      \begin{knot}[
        clip width=5,
        clip radius=8pt,
        ]
        
        \node at (0, -0.5)  {\scalebox{0.5}{$j$}} ; 
        \node at (2 , -0.5) {\scalebox{0.5}{$j+1$}} ; 
        \node at (3 , -0.5) {\scalebox{0.5}{$j+2$}} ; 
        \node at (-1, 5.3 )  {\scalebox{0.5}{$j$}} ; 
        \node at (1 , 5.3) {\scalebox{0.5}{$j+1$}} ; 
        \node at (2 , 5.3) {\scalebox{0.5}{$j+2$}} ; 
        \node at (3 , 5.3) {\scalebox{0.5}{$j+3$}} ; 
        \node at (4.5, 2.65)  {\scalebox{1}{$\Longrightarrow$}} ;
        \node at (4.5, 3.4)  {\scalebox{1}{$\zeta$}} ;
        Strand at (3, 4.8)
        \strand[thick] (0, 0.0) to (0, 0.27) to (0, 0.93) to (0, 1.2) to (0, 1.47) to (2, 2.13) to (2, 2.4) to (2, 2.67) to (3, 3.33) to (3, 3.6) to (3, 3.87) to (3, 4.53) to (3, 4.8) ;
        
        Strand at (2, 4.8)
        \strand[thick] (2, 0.0) to (2, 0.27) to (3, 0.93) to (3, 1.2) to (3, 1.47) to (3, 2.13) to (3, 2.4) to (3, 2.67) to (2, 3.33) to (2, 3.6) to (2, 3.87) to (2, 4.53) to (2, 4.8) ;
        
        Strand at (-1, 4.8)
        \strand[thick] (0, 3.87) to (-1, 4.53) to (-1, 4.8) ;
        
        Strand at (1, 4.8)
        \strand[thick] (3, 0.0) to (3, 0.27) to (2, 0.93) to (2, 1.2) to (2, 1.47) to (0, 2.13) to (0, 2.4) to (0, 2.67) to (0, 3.33) to (0, 3.6) to (0, 3.87) to (1, 4.53) to (1, 4.8) ;
        
      \end{knot} 					
    \end{tikzpicture}
    & \begin{tikzpicture}[xscale=0.5, yscale=0.5]
      
      \begin{knot}[
        clip width=5,
        clip radius=8pt,
        ]
        
        \node at (-1, -0.5)  {\scalebox{0.5}{$j$}} ; 
        \node at (0 , -0.5) {\scalebox{0.5}{$j+1$}} ; 
        \node at (2 , -0.5) {\scalebox{0.5}{$j+2$}} ; 
        \node at (-1, 7.7 )  {\scalebox{0.5}{$j$}} ; 
        \node at (0 , 7.7) {\scalebox{0.5}{$j+1$}} ; 
        \node at (1 , 7.7) {\scalebox{0.5}{$j+2$}} ; 
        \node at (3 , 7.7) {\scalebox{0.5}{$j+3$}} ; 
        \node at (5 , 3.6) {\scalebox{1}{$=$}} ;
        Strand at (3, 7.2)
        \strand[thick] (-1, 0.0) to (-1, 0.27) to (-1, 0.93) to (-1, 1.2) to (-1, 1.47) to (-1, 2.13) to (-1, 2.4) to (-1, 2.67) to (-1, 3.33) to (-1, 3.6) to (-1, 3.87) to (0, 4.53) to (0, 4.8) to (0, 5.07) to (1, 5.73) to (1, 6.0) to (1, 6.27) to (3, 6.93) to (3, 7.2) ;
        
        Strand at (1, 7.2)
        \strand[thick] (0, 0.0) to (0, 0.27) to (0, 0.93) to (0, 1.2) to (0, 1.47) to (1, 2.13) to (1, 2.4) to (1, 2.67) to (3, 3.33) to (3, 3.6) to (3, 3.87) to (3, 4.53) to (3, 4.8) to (3, 5.07) to (3, 5.73) to (3, 6.0) to (3, 6.27) to (1, 6.93) to (1, 7.2) ;
        
        Strand at (-1, 7.2)
        \strand[thick] (2, 0.27) to (1, 0.93) to (1, 1.2) to (1, 1.47) to (0, 2.13) to (0, 2.4) to (0, 2.67) to (0, 3.33) to (0, 3.6) to (0, 3.87) to (-1, 4.53) to (-1, 4.8) to (-1, 5.07) to (-1, 5.73) to (-1, 6.0) to (-1, 6.27) to (-1, 6.93) to (-1, 7.2) ;
        
        Strand at (0, 7.2)
        \strand[thick] (2, 0.0) to (2, 0.27) to (3, 0.93) to (3, 1.2) to (3, 1.47) to (3, 2.13) to (3, 2.4) to (3, 2.67) to (1, 3.33) to (1, 3.6) to (1, 3.87) to (1, 4.53) to (1, 4.8) to (1, 5.07) to (0, 5.73) to (0, 6.0) to (0, 6.27) to (0, 6.93) to (0, 7.2) ;
        
      \end{knot} 
    \end{tikzpicture}
    & \begin{tikzpicture}[xscale=0.5, yscale=0.5]
      
      \begin{knot}[
        clip width=5,
        clip radius=8pt,
        ]
        
        \node at (-1, -0.5)  {\scalebox{0.5}{$j$}} ; 
        \node at (0 , -0.5) {\scalebox{0.5}{$j+1$}} ; 
        \node at (2 , -0.5) {\scalebox{0.5}{$j+2$}} ; 
        \node at (-1, 7.7 )  {\scalebox{0.5}{$j$}} ; 
        \node at (0 , 7.7) {\scalebox{0.5}{$j+1$}} ; 
        \node at (1 , 7.7) {\scalebox{0.5}{$j+2$}} ; 
        \node at (3 , 7.7) {\scalebox{0.5}{$j+3$}} ; 
        Strand at (3, 7.2)
        \strand[thick] (-1, 0.0) to (-1, 0.27) to (-1, 0.93) to (-1, 1.2) to (-1, 1.47) to (0, 2.13) to (0, 2.4) to (0, 2.67) to (1, 3.33) to (1, 3.6) to (1, 3.87) to (3, 4.53) to (3, 4.8) to (3, 5.07) to (3, 5.73) to (3, 6.0) to (3, 6.27) to (3, 6.93) to (3, 7.2) ;
        
        Strand at (1, 7.2)
        \strand[thick] (0, 0.0) to (0, 0.27) to (0, 0.93) to (0, 1.2) to (0, 1.47) to (-1, 2.13) to (-1, 2.4) to (-1, 2.67) to (-1, 3.33) to (-1, 3.6) to (-1, 3.87) to (-1, 4.53) to (-1, 4.8) to (-1, 5.07) to (0, 5.73) to (0, 6.0) to (0, 6.27) to (1, 6.93) to (1, 7.2) ;
        
        Strand at (-1, 7.2)
        \strand[thick] (2, 0.27) to (1, 0.93) to (1, 1.2) to (1, 1.47) to (1, 2.13) to (1, 2.4) to (1, 2.67) to (0, 3.33) to (0, 3.6) to (0, 3.87) to (0, 4.53) to (0, 4.8) to (0, 5.07) to (-1, 5.73) to (-1, 6.0) to (-1, 6.27) to (-1, 6.93) to (-1, 7.2) ;
        
        Strand at (0, 7.2)
        \strand[thick] (2, 0.0) to (2, 0.27) to (3, 0.93) to (3, 1.2) to (3, 1.47) to (3, 2.13) to (3, 2.4) to (3, 2.67) to (3, 3.33) to (3, 3.6) to (3, 3.87) to (1, 4.53) to (1, 4.8) to (1, 5.07) to (1, 5.73) to (1, 6.0) to (1, 6.27) to (0, 6.93) to (0, 7.2) ;
        
      \end{knot} 
      
    \end{tikzpicture}  &
                         \begin{tikzpicture}[xscale=0.5, yscale=0.5]
                           
                           \begin{knot}[
                             clip width=5,
                             clip radius=8pt,
                             ]
                             
                             \node at (0, -0.5)  {\scalebox{0.5}{$j$}} ; 
                             \node at (2 , -0.5) {\scalebox{0.5}{$j+1$}} ; 
                             \node at (3 , -0.5) {\scalebox{0.5}{$j+2$}} ; 
                             \node at (-1, 5.3 )  {\scalebox{0.5}{$j$}} ; 
                             \node at (1 , 5.3) {\scalebox{0.5}{$j+1$}} ; 
                             \node at (2 , 5.3) {\scalebox{0.5}{$j+2$}} ; 
                             \node at (3 , 5.3) {\scalebox{0.5}{$j+3$}} ;
                             \node at (-2.5, 2.65)  {\scalebox{1}{$\Longleftarrow$}} ;
                             \node at (-2.5, 3.4)  {\scalebox{1}{$\zeta$}} ;
                             Strand at (3, 4.8)
                             \strand[thick] (0, 0.0) to (0, 0.27) to (2, 0.93) to (2, 1.2) to (2, 1.47) to (3, 2.13) to (3, 2.4) to (3, 2.67) to (3, 3.33) to (3, 3.6) to (3, 3.87) to (3, 4.53) to (3, 4.8) ;
                             
                             Strand at (2, 4.8)
                             \strand[thick] (2, 0.0) to (2, 0.27) to (0, 0.93) to (0, 1.2) to (0, 1.47) to (0, 2.13) to (0, 2.4) to (0, 2.67) to (2, 3.33) to (2, 3.6) to (2, 3.87) to (2, 4.53) to (2, 4.8) ;
                             
                             Strand at (-1, 4.8)
                             \strand[thick] (0, 3.87) to (-1, 4.53) to (-1, 4.8) ;
                             
                             Strand at (1, 4.8)
                             \strand[thick] (3, 0.0) to (3, 0.27) to (3, 0.93) to (3, 1.2) to (3, 1.47) to (2, 2.13) to (2, 2.4) to (2, 2.67) to (0, 3.33) to (0, 3.6) to (0, 3.87) to (1, 4.53) to (1, 4.8) ;
                             
                           \end{knot} 
                           
                         \end{tikzpicture} 
  \end{tabular}
\]
\end{enumerate}
	
We also need to check if $\zeta: T(\partial) \otimes_{\B{K}} T(\chi) \to T(\chi) \otimes_{\B{K}} T(\partial)$ extends to a right transposition for $\Mag$:
\[\begin{tikzcd}
  \Mag \otimes_{\B{K}} \Mag \otimes_{\B{K}} T(\chi) 
  \arrow[rr, "{(\Mag\otimes\zeta)}"]  
  \arrow[d, "{\mu_{\Mag}\otimes {T(\chi)}}"] 
  & & 
  \Mag \otimes_{\B{K}} T(\chi)\otimes_{\B{K}}\Mag
  \arrow[rr, "{\zeta\otimes {\Mag}}"]
  & &
  T(\chi)\otimes_{\B{K}}\Mag \otimes_{\B{K}}\Mag  
  \arrow[d, "{{T(\chi)}\otimes\mu_{\Mag}}"']
  \\
  \Mag \otimes_{\B{K}} T(\chi) 
  \arrow[rrrr, "\zeta"'] 
  & & & &
  T(\chi) \otimes_{\B{K}} \Mag 
\end{tikzcd}
\]
For this, in $T(\chi) \otimes_{\B{K}} \Mag$ we need to verify
\[ \zeta (\partial_i^{n+1} \partial_j^n\otimes \chi_k^n) = \zeta (\partial_{j+1}^{n+1}\partial_i^n \otimes \chi_{k}^n) \]
for all $0\leq i < j \leq n$ and $0\leq k \leq n-1$ where $n \geq 1$. We examine this in $7$ cases as follows:
\begin{enumerate}[{Case} (1):]  
\item $k<i-1<i<j$. Since  $j>k+1$, we also have $j+1>k+1$ which implies
  \begin{align*}
    \zeta (\partial_i^{n+1} \partial_j^n \otimes \chi_{k}^n)
    = & \chi_{k}^{n+2} \otimes \partial_i^{n+1} \partial_j^n \\
    = & \chi_{k}^{n+2} \otimes \partial_{j+1}^{n+1} \partial_i^n
        = \zeta (\partial_{j+1}^{n+1} \partial_i^n \otimes \chi_{k}^n)
  \end{align*}
  
\item $k=i-1<i<j$
  \begin{align*}
    \zeta\left(\partial_i^{n+1} \partial_j^n \otimes \chi_{i-1}^n\right)
    = & \chi_{i-1}^{n+2} \chi_i^{n+2} \otimes \partial_{i-1}^{n+1} \partial_j^n\\
    = & \chi_{i-1}^{n+2} \chi_i^{n+2} \otimes \partial_{j+1}^{n+1} \partial_{i-1}^n
        =\zeta\left(\partial_{j+1}^{n+1} \partial_i^n \otimes \chi_{i-1}^n\right)
  \end{align*}
  
\item $k=i< i+1 < j$
  \begin{align*}
    \zeta\left(\partial_i^{n+1} \partial_{j}^n \otimes \chi_i^n\right)
    = & \chi_{i+1}^{n+2} \chi_i^{n+2} \otimes \partial_{i+1}^{n+1} \partial_j^n  \\
    =  & \chi_{i+1}^{n+2} \chi_i^{n+2} \otimes \partial_{j+1}^{n+1} \partial_{i+1}^n
         =  \zeta\left(\partial_{j+1}^{n+1} \partial_{i}^n \otimes \chi_i^n\right)& 
  \end{align*}
  
\item $k=i<i+1=j$
  \begin{align*}
    \zeta\left(\partial_i^{n+1} \partial_{i+1}^n \otimes \chi_i^n\right)
    = & \chi_{i+1}^{n+2} \chi_i^{n+2} \chi_{i+2}^{n+2} \chi_{i+1}^{n+2} \otimes \partial_{i+2}^{n+1} \partial_i^n \\
    = & \chi_{i+1}^{n+2} \chi_{i+2}^{n+2} \chi_{i}^{n+2} \chi_{i+1}^{n+2} \otimes \partial_{i}^{n+1} \partial_{i+1}^n
        =  \zeta\left(\partial_{i+2}^{n+1} \partial_{i}^n \otimes \chi_i^n\right)& 
  \end{align*}
  The string diagrams for the equality above are as follows:
  \[ \begin{tabular}{c c c c}  
    \begin{tikzpicture}[xscale=0.5, yscale=0.5]
      
      \begin{knot}[
        clip width=5,
        clip radius=8pt,
        ]
        
        \node at (-1, -0.5)  {\scalebox{0.5}{$i$}} ; 
        \node at (2 , -0.5) {\scalebox{0.5}{$i+1$}} ; 
        \node at (-2, 4.1 )  {\scalebox{0.5}{$i$}} ; 
        \node at (0 , 4.1) {\scalebox{0.5}{$i+1$}} ; 
        \node at (1 , 4.1) {\scalebox{0.5}{$i+2$}} ; 
        \node at (3 , 4.1) {\scalebox{0.5}{$i+3$}} ; 
        \node at (4.5, 1.8)  {\scalebox{1}{$\Longrightarrow$}} ;
        \node at (4.5, 2.4)  {\scalebox{1}{$\zeta$}} ;
        
        Strand at (1, 3.6)
        \strand[thick] (2, 1.47) to (1, 2.13) to (1, 2.4) to (1, 2.67) to (1, 3.33) to (1, 3.6) ;
        
        Strand at (3, 3.6)
        \strand[thick] (-1, 0.0) to (-1, 0.27) to (2, 0.93) to (2, 1.2) to (2, 1.47) to (3, 2.13) to (3, 2.4) to (3, 2.67) to (3, 3.33) to (3, 3.6) ;
        
        Strand at (-2, 3.6)
        \strand[thick] (-1, 2.67) to (-2, 3.33) to (-2, 3.6) ;
        
        Strand at (0, 3.6)
        \strand[thick] (2, 0.0) to (2, 0.27) to (-1, 0.93) to (-1, 1.2) to (-1, 1.47) to (-1, 2.13) to (-1, 2.4) to (-1, 2.67) to (0, 3.33) to (0, 3.6) ;
      \end{knot} 
    \end{tikzpicture}
    &\begin{tikzpicture}[xscale=0.5, yscale=0.5]
      
      \begin{knot}[
        clip width=5,
        clip radius=8pt,
        ]
        
        \node at (-1, -0.5)  {\scalebox{0.5}{$i$}} ; 
        \node at (2 , -0.5) {\scalebox{0.5}{$i+1$}} ; 
        
        \node at (-2, 7.7 )  {\scalebox{0.5}{$i$}} ; 
        \node at (0 , 7.7) {\scalebox{0.5}{$i+1$}} ; 
        \node at (1 , 7.7) {\scalebox{0.5}{$i+2$}} ; 
        \node at (3 , 7.7) {\scalebox{0.5}{$i+3$}} ;
        \node at (4.5 , 3.6) {\scalebox{1}{$=$}} ; 
        
        Strand at (1, 7.2)
        \strand[thick] (-1, 0.27) to (-2, 0.93) to (-2, 1.2) to (-2, 1.47) to (-2, 2.13) to (-2, 2.4) to (-2, 2.67) to (-2, 3.33) to (-2, 3.6) to (-2, 3.87) to (-2, 4.53) to (-2, 4.8) to (-2, 5.07) to (0, 5.73) to (0, 6.0) to (0, 6.27) to (1, 6.93) to (1, 7.2) ;
        
        Strand at (3, 7.2)
        \strand[thick] (-1, 0.0) to (-1, 0.27) to (0, 0.93) to (0, 1.2) to (0, 1.47) to (0, 2.13) to (0, 2.4) to (0, 2.67) to (1, 3.33) to (1, 3.6) to (1, 3.87) to (3, 4.53) to (3, 4.8) to (3, 5.07) to (3, 5.73) to (3, 6.0) to (3, 6.27) to (3, 6.93) to (3, 7.2) ;
        
        Strand at (-2, 7.2)
        \strand[thick] (2, 1.47) to (1, 2.13) to (1, 2.4) to (1, 2.67) to (0, 3.33) to (0, 3.6) to (0, 3.87) to (0, 4.53) to (0, 4.8) to (0, 5.07) to (-2, 5.73) to (-2, 6.0) to (-2, 6.27) to (-2, 6.93) to (-2, 7.2) ;
        
        Strand at (0, 7.2)
        \strand[thick] (2, 0.0) to (2, 0.27) to (2, 0.93) to (2, 1.2) to (2, 1.47) to (3, 2.13) to (3, 2.4) to (3, 2.67) to (3, 3.33) to (3, 3.6) to (3, 3.87) to (1, 4.53) to (1, 4.8) to (1, 5.07) to (1, 5.73) to (1, 6.0) to (1, 6.27) to (0, 6.93) to (0, 7.2) ;
      \end{knot}  
    \end{tikzpicture}
    & \begin{tikzpicture}[xscale=0.5, yscale=0.5]
      \begin{knot}[
        clip width=5,
        clip radius=8pt,
        ]
        
        \node at (-1, -0.5)  {\scalebox{0.5}{$i$}} ; 
        \node at (2 , -0.5) {\scalebox{0.5}{$i+1$}} ;  
        \node at (-2, 7.7 )  {\scalebox{0.5}{$i$}} ; 
        \node at (0 , 7.7) {\scalebox{0.5}{$i+1$}} ; 
        \node at (1 , 7.7) {\scalebox{0.5}{$i+2$}} ; 
        \node at (3 , 7.7) {\scalebox{0.5}{$i+3$}} ;

        Strand at (1, 7.2)
        \strand[thick] (-1, 1.47) to (-2, 2.13) to (-2, 2.4) to (-2, 2.67) to (-2, 3.33) to (-2, 3.6) to (-2, 3.87) to (0, 4.53) to (0, 4.8) to (0, 5.07) to (0, 5.73) to (0, 6.0) to (0, 6.27) to (1, 6.93) to (1, 7.2) ;
        
        Strand at (3, 7.2)
        \strand[thick] (-1, 0.0) to (-1, 0.27) to (-1, 0.93) to (-1, 1.2) to (-1, 1.47) to (0, 2.13) to (0, 2.4) to (0, 2.67) to (1, 3.33) to (1, 3.6) to (1, 3.87) to (1, 4.53) to (1, 4.8) to (1, 5.07) to (3, 5.73) to (3, 6.0) to (3, 6.27) to (3, 6.93) to (3, 7.2) ;
        
        Strand at (-2, 7.2)
        \strand[thick] (2, 0.27) to (1, 0.93) to (1, 1.2) to (1, 1.47) to (1, 2.13) to (1, 2.4) to (1, 2.67) to (0, 3.33) to (0, 3.6) to (0, 3.87) to (-2, 4.53) to (-2, 4.8) to (-2, 5.07) to (-2, 5.73) to (-2, 6.0) to (-2, 6.27) to (-2, 6.93) to (-2, 7.2) ;
        
        Strand at (0, 7.2)
        \strand[thick] (2, 0.0) to (2, 0.27) to (3, 0.93) to (3, 1.2) to (3, 1.47) to (3, 2.13) to (3, 2.4) to (3, 2.67) to (3, 3.33) to (3, 3.6) to (3, 3.87) to (3, 4.53) to (3, 4.8) to (3, 5.07) to (1, 5.73) to (1, 6.0) to (1, 6.27) to (0, 6.93) to (0, 7.2) ;
      \end{knot} 
    \end{tikzpicture}
    &\begin{tikzpicture}[xscale=0.5, yscale=0.5]
      \begin{knot}[
        clip width=5,
        clip radius=8pt,
        ]
        
        \node at (-1, -0.5)  {\scalebox{0.5}{$i$}} ; 
        \node at (2 , -0.5) {\scalebox{0.5}{$i+1$}} ; 
        \node at (-2, 4.1 )  {\scalebox{0.5}{$i$}} ; 
        \node at (0 , 4.1) {\scalebox{0.5}{$i+1$}} ; 
        \node at (1 , 4.1) {\scalebox{0.5}{$i+2$}} ; 
        \node at (3 , 4.1) {\scalebox{0.5}{$i+3$}} ; 
        \node at (-3.5, 1.8)  {\scalebox{1}{$\Longleftarrow$}} ;
        \node at (-3.5, 2.4)  {\scalebox{1}{$\zeta$}} ;
        
        Strand at (1, 3.6)
        \strand[thick] (2, 2.67) to (1, 3.33) to (1, 3.6) ;
        
        Strand at (3, 3.6)
        \strand[thick] (-1, 0.0) to (-1, 0.27) to (2, 0.93) to (2, 1.2) to (2, 1.47) to (2, 2.13) to (2, 2.4) to (2, 2.67) to (3, 3.33) to (3, 3.6) ;
        
        Strand at (-2, 3.6)
        \strand[thick] (-1, 1.47) to (-2, 2.13) to (-2, 2.4) to (-2, 2.67) to (-2, 3.33) to (-2, 3.6) ;
        
        Strand at (0, 3.6)
        \strand[thick] (2, 0.0) to (2, 0.27) to (-1, 0.93) to (-1, 1.2) to (-1, 1.47) to (0, 2.13) to (0, 2.4) to (0, 2.67) to (0, 3.33) to (0, 3.6) ;
      \end{knot} 
    \end{tikzpicture} 
  \end{tabular}
\]
		
\item $i<k<j-1<j$
  \begin{align*}
    \zeta\left(\partial_i^{n+1} \partial_{j}^n \otimes \chi_{k}^n\right)
    = & \chi_{k+1}^{n+2} \otimes \partial_{i}^{n+1} \partial_{j}^n  \\
    = &  \chi_{k+1}^{n+2} \otimes \partial_{j+1}^{n+1} \partial_{i}^n
        =  \zeta\left(\partial_{j+1}^{n+1} \partial_{i}^n \otimes \chi_k^{n}\right)& 
  \end{align*}
  
\item $i<k=j-1<j$
  \begin{align*}
    \zeta\left(\partial_i^{n+1} \partial_{j}^n \otimes \chi_{j-1}^n\right)
    = & \chi_{j}^{n+2} \chi_{j+1}^{n+2} \otimes \partial_{i}^{n+1} \partial_{j-1}^n  \\
    = & \chi_{j}^{n+2} \chi_{j+1}^{n+2} \otimes \partial_{j}^{n+1} \partial_{i}^n
        =  \zeta\left(\partial_{j+1}^{n+1} \partial_{i}^n \otimes \chi_{j-1}^{n}\right)& 
  \end{align*}
  
\item $i<k=j$
  \begin{align*}
    \zeta\left(\partial_i^{n+1} \partial_{j}^n \otimes \chi_{j}^n\right)
    = & \chi_{j+2}^{n+2} \chi_{j+1}^{n+2} \otimes \partial_{i}^{n+1} \partial_{j+1}^n  \\
    = & \chi_{j+2}^{n+2} \chi_{j+1}^{n+2} \otimes \partial_{j+2}^{n+1} \partial_{i}^n
        =  \zeta\left(\partial_{j+1}^{n+1} \partial_{i}^n \otimes \chi_j^{n}\right)& 
  \end{align*}

\item $ i<j<k$.
  \begin{align*}
    \zeta (\partial_i^{n+1} \partial_j^n \otimes \chi_{k}^n)
    = & \chi_{k+2}^{n+2} \otimes \partial_i^{n+1} \partial_j^n\\
    = & \chi_{k+2}^{n+2} \otimes \partial_{j+1}^{n+1} \partial_i^n
        = \zeta (\partial_{j+1}^{n+1} \partial_i^n \otimes \chi_{k}^n)
  \end{align*}
\end{enumerate}
	
\subsection{$\Simp  \otimes_{\B{K}} \Braid \to \Braid  \otimes_{\B{K}} \Simp$}

We have already established that $\zeta: T(\partial) \otimes_{\B{K}} T(\chi) \to T(\chi) \otimes_{\B{K}} T(\partial)$ extends to a left transposition of the form $\zeta: T(\partial) \otimes_{\B{K}} \Braid \to \Braid \otimes_{\B{K}} T(\partial)$. We only need to show that $\zeta: T(\partial) \otimes_{\B{K}} T(\chi) \to T(\chi) \otimes_{\B{K}} T(\partial)$ also extends to a right transposition for $\Simp$,  $\zeta\colon \Simp \otimes_{\B{K}} T(\chi) \to T(\chi) \otimes_{\B{K}} \Simp$. Observe that most of the work is also done here because we showed that the distributive law $\zeta$ extends from ${\Mag}$ to $\Simp$. Also, note that an equivalence of elements in $\Mag$ implies their equivalence in $\Simp$.  The only remaining relation we check in $\mathcal{I}_{\Simp}$ is this:
\[ \zeta (\partial_i^{n+1} \partial_i^n\otimes \chi_k^n) = \zeta (\partial_{i+1}^{n+1}\partial_i^n \otimes \chi_{k}^n) \]
for all $0\leq i  \leq n$ and $0\leq k \leq n-1$ where $n \geq 1$. We examine this in $4$ cases as follows:

\begin{enumerate}[{Case}(1): ]
\item $i<k$. Since  $i<k$, we also have $i+1<k+1$ which implies
  \begin{align*}
    \zeta\left(\partial_i^{n+1} \partial_{i}^n \otimes \chi_{k}^n\right)
    = & \chi_{k+2}^{n+2}  \otimes \partial_{i}^{n+1} \partial_{i}^n  \\
    = & \chi_{k+2}^{n+2} \otimes \partial_{i+1}^{n+1} \partial_{i}^n 
        =  \zeta\left(\partial_{i+1}^{n+1} \partial_{i}^n \otimes \chi_k^{n}\right)
  \end{align*}
  
\item $i>k+1$ 
  \begin{align*}
    \zeta\left(\partial_i^{n+1} \partial_{i}^n \otimes \chi_{k}^n\right)
    = & \chi_{k}^{n+2}  \otimes \partial_{i}^{n+1} \partial_{i}^n \\
    = & \chi_{k}^{n+2} \otimes \partial_{i+1}^{n+1} \partial_{i}^n 
        = \zeta\left(\partial_{i+1}^{n+1} \partial_{i}^n \otimes \chi_k^{n}\right)& 
  \end{align*}
  
\item $i=k$
  \begin{align*}
    \zeta\left(\partial_i^{n+1} \partial_{i}^n \otimes \chi_{i}^n\right)
    = & \chi_{i+2}^{n+2}\chi_{i+1}^{n+2} \chi_{i}^{n+2}    \otimes \partial_{i+1}^{n+1} \partial_{i+1}^n  \\
    = & \chi_{i+2}^{n+2}\chi_{i+1}^{n+2} \chi_{i}^{n+2}    \otimes \partial_{i+2}^{n+1} \partial_{i+1}^n 
        = \zeta\left(\partial_{i+1}^{n+1} \partial_{i}^n \otimes \chi_i^{n}\right)
  \end{align*}
  
\[ \begin{tabular}{c c c c}
  \begin{tikzpicture}[xscale=0.5, yscale=0.5]
    \begin{knot}[
      clip width=5,
      clip radius=8pt,
      ]
      
      \node at (0, -0.5)  {\scalebox{0.5}{$i$}} ; 
      \node at (2 , -0.5) {\scalebox{0.5}{$i+1$}} ; 
      \node at (-2, 4.1 )  {\scalebox{0.5}{$i$}} ; 
      \node at (0 , 4.1) {\scalebox{0.5}{$i+1$}} ; 
      \node at (1 , 4.1) {\scalebox{0.5}{$i+2$}} ; 
      \node at (2 , 4.1) {\scalebox{0.5}{$i+3$}} ; 
      \node at (3.5, 1.8)  {\scalebox{1}{$\Longrightarrow$}} ;
      \node at (3.5, 2.4)  {\scalebox{1}{$\zeta$}} ;
      Strand at (2, 3.6)
      \strand[thick] (0, 0.0) to (0, 0.27) to (2, 0.93) to (2, 1.2) to (2, 1.47) to (2, 2.13) to (2, 2.4) to (2, 2.67) to (2, 3.33) to (2, 3.6) ;
      
      Strand at (-2, 3.6)
      \strand[thick] (-1, 2.67) to (-2, 3.33) to (-2, 3.6) ;
      
      Strand at (0, 3.6)
      \strand[thick] (0, 1.47) to (-1, 2.13) to (-1, 2.4) to (-1, 2.67) to (0, 3.33) to (0, 3.6) ;
      
      Strand at (1, 3.6)
      \strand[thick] (2, 0.0) to (2, 0.27) to (0, 0.93) to (0, 1.2) to (0, 1.47) to (1, 2.13) to (1, 2.4) to (1, 2.67) to (1, 3.33) to (1, 3.6) ;
    \end{knot} 
  \end{tikzpicture}
  & \begin{tikzpicture}[xscale=0.5, yscale=0.5]
    \begin{knot}[
      clip width=5,
      clip radius=8pt,
      ]
      
      \node at (-2, -0.5)  {\scalebox{0.5}{$i$}} ; 
      \node at (1 , -0.5) {\scalebox{0.5}{$i+1$}} ; 
      \node at (-2, 6.5 )  {\scalebox{0.5}{$i$}} ; 
      \node at (-1 , 6.5) {\scalebox{0.5}{$i+1$}} ; 
      \node at (1 , 6.5) {\scalebox{0.5}{$i+2$}} ; 
      \node at (2 , 6.5) {\scalebox{0.5}{$i+3$}} ;
      \node at (3.5 , 3) {\scalebox{1}{$=$}} ;
      Strand at (2, 6.0)
      \strand[thick] (-2, 0.0) to (-2, 0.27) to (-2, 0.93) to (-2, 1.2) to (-2, 1.47) to (-2, 2.13) to (-2, 2.4) to (-2, 2.67) to (-1, 3.33) to (-1, 3.6) to (-1, 3.87) to (1, 4.53) to (1, 4.8) to (1, 5.07) to (2, 5.73) to (2, 6.0) ;
      
      Strand at (-2, 6.0)
      \strand[thick] (0, 1.47) to (-1, 2.13) to (-1, 2.4) to (-1, 2.67) to (-2, 3.33) to (-2, 3.6) to (-2, 3.87) to (-2, 4.53) to (-2, 4.8) to (-2, 5.07) to (-2, 5.73) to (-2, 6.0) ;
      
      Strand at (-1, 6.0)
      \strand[thick] (1, 0.27) to (0, 0.93) to (0, 1.2) to (0, 1.47) to (1, 2.13) to (1, 2.4) to (1, 2.67) to (1, 3.33) to (1, 3.6) to (1, 3.87) to (-1, 4.53) to (-1, 4.8) to (-1, 5.07) to (-1, 5.73) to (-1, 6.0) ;
      
      Strand at (1, 6.0)
      \strand[thick] (1, 0.0) to (1, 0.27) to (2, 0.93) to (2, 1.2) to (2, 1.47) to (2, 2.13) to (2, 2.4) to (2, 2.67) to (2, 3.33) to (2, 3.6) to (2, 3.87) to (2, 4.53) to (2, 4.8) to (2, 5.07) to (1, 5.73) to (1, 6.0) ;
    \end{knot} 
  \end{tikzpicture}
  & \begin{tikzpicture}[xscale=0.5, yscale=0.5]
    \begin{knot}[
      clip width=5,
      clip radius=8pt,
      ]
      
      \node at (-1, -0.5)  {\scalebox{0.5}{$i$}} ; 
      \node at (1 , -0.5) {\scalebox{0.5}{$i+1$}} ; 
      \node at (-1, 6.5 )  {\scalebox{0.5}{$i$}} ; 
      \node at (0 , 6.5) {\scalebox{0.5}{$i+1$}} ; 
      \node at (1 , 6.5) {\scalebox{0.5}{$i+2$}} ; 
      \node at (3 , 6.5) {\scalebox{0.5}{$i+3$}} ; 
      Strand at (3, 6.0)
      \strand[thick] (-1, 0.0) to (-1, 0.27) to (-1, 0.93) to (-1, 1.2) to (-1, 1.47) to (-1, 2.13) to (-1, 2.4) to (-1, 2.67) to (0, 3.33) to (0, 3.6) to (0, 3.87) to (1, 4.53) to (1, 4.8) to (1, 5.07) to (3, 5.73) to (3, 6.0) ;
      
      Strand at (-1, 6.0)
      \strand[thick] (1, 0.27) to (0, 0.93) to (0, 1.2) to (0, 1.47) to (0, 2.13) to (0, 2.4) to (0, 2.67) to (-1, 3.33) to (-1, 3.6) to (-1, 3.87) to (-1, 4.53) to (-1, 4.8) to (-1, 5.07) to (-1, 5.73) to (-1, 6.0) ;
      
      Strand at (0, 6.0)
      \strand[thick] (2, 1.47) to (1, 2.13) to (1, 2.4) to (1, 2.67) to (1, 3.33) to (1, 3.6) to (1, 3.87) to (0, 4.53) to (0, 4.8) to (0, 5.07) to (0, 5.73) to (0, 6.0) ;
      
      Strand at (1, 6.0)
      \strand[thick] (1, 0.0) to (1, 0.27) to (2, 0.93) to (2, 1.2) to (2, 1.47) to (3, 2.13) to (3, 2.4) to (3, 2.67) to (3, 3.33) to (3, 3.6) to (3, 3.87) to (3, 4.53) to (3, 4.8) to (3, 5.07) to (1, 5.73) to (1, 6.0) ;
    \end{knot} 
  \end{tikzpicture} &
                      \begin{tikzpicture}[xscale=0.5, yscale=0.5]
                        \begin{knot}[
                          clip width=5,
                          clip radius=8pt,
                          ]
                          \node at (0, -0.5)  {\scalebox{0.5}{$i$}} ; 
                          \node at (3 , -0.5) {\scalebox{0.5}{$i+1$}} ; 
                          \node at (-1, 4.1 )  {\scalebox{0.5}{$i$}} ; 
                          \node at (0 , 4.1) {\scalebox{0.5}{$i+1$}} ; 
                          \node at (2 , 4.1) {\scalebox{0.5}{$i+2$}} ; 
                          \node at (3 , 4.1) {\scalebox{0.5}{$i+3$}} ; 
                          \node at (-2.5, 1.8)  {\scalebox{1}{$\Longleftarrow$}} ;
                          \node at (-2.5, 2.4)  {\scalebox{1}{$\zeta$}} ;
                          Strand at (3, 3.6)
                          \strand[thick] (0, 0.0) to (0, 0.27) to (3, 0.93) to (3, 1.2) to (3, 1.47) to (3, 2.13) to (3, 2.4) to (3, 2.67) to (3, 3.33) to (3, 3.6) ;
                          
                          Strand at (-1, 3.6)
                          \strand[thick] (0, 1.47) to (-1, 2.13) to (-1, 2.4) to (-1, 2.67) to (-1, 3.33) to (-1, 3.6) ;
                          
                          Strand at (0, 3.6)
                          \strand[thick] (1, 2.67) to (0, 3.33) to (0, 3.6) ;
                          
                          Strand at (2, 3.6)
                          \strand[thick] (3, 0.0) to (3, 0.27) to (0, 0.93) to (0, 1.2) to (0, 1.47) to (1, 2.13) to (1, 2.4) to (1, 2.67) to (2, 3.33) to (2, 3.6) ;
                        \end{knot} 
                      \end{tikzpicture} 
\end{tabular}
\]
		
\item $i=k+1$
  \begin{align*}
    \zeta\left(\partial_i^{n+1} \partial_{i}^n \otimes \chi_{i-1}^n\right)
    = & \chi_{i-1}^{n+2}\chi_{i}^{n+2} \chi_{i+1}^{n+2}    \otimes \partial_{i-1}^{n+1} \partial_{i-1}^n  \\
    = & \chi_{i-1}^{n+2}\chi_{i}^{n+2} \chi_{i+1}^{n+2}    \otimes \partial_{i}^{n+1} \partial_{i-1}^n 
        = \zeta\left(\partial_{i+1}^{n+1} \partial_{i}^n \otimes \chi_{i-1}^{n}\right)
  \end{align*}
  
  \[\begin{tabular}{c c c c}
      \begin{tikzpicture}[xscale=0.5, yscale=0.5]
        \begin{knot}[
          clip width=5,
          clip radius=8pt,
          ]
          \node at (-2, -0.5)  {\scalebox{0.5}{$i-1$}} ; 
          \node at (1 , -0.5) {\scalebox{0.5}{$i$}} ; 
          \node at (-2, 4.1 )  {\scalebox{0.5}{$i-1$}} ; 
          \node at (-1 , 4.1) {\scalebox{0.5}{$i$}} ; 
          \node at (1 , 4.1) {\scalebox{0.5}{$i+1$}} ; 
          \node at (2 , 4.1) {\scalebox{0.5}{$i+2$}} ;
          \node at (3.5, 1.8)  {\scalebox{1}{$\Longrightarrow$}} ;
          \node at (3.5, 2.4)  {\scalebox{1}{$\zeta$}} ;
          
          Strand at (-1, 3.6)
          \strand[thick] (0, 2.67) to (-1, 3.33) to (-1, 3.6) ;
          
          Strand at (1, 3.6)
          \strand[thick] (1, 1.47) to (0, 2.13) to (0, 2.4) to (0, 2.67) to (1, 3.33) to (1, 3.6) ;
          
          Strand at (2, 3.6)
          \strand[thick] (-2, 0.0) to (-2, 0.27) to (1, 0.93) to (1, 1.2) to (1, 1.47) to (2, 2.13) to (2, 2.4) to (2, 2.67) to (2, 3.33) to (2, 3.6) ;
          
          Strand at (-2, 3.6)
          \strand[thick] (1, 0.0) to (1, 0.27) to (-2, 0.93) to (-2, 1.2) to (-2, 1.47) to (-2, 2.13) to (-2, 2.4) to (-2, 2.67) to (-2, 3.33) to (-2, 3.6) ;
          
        \end{knot} 
      \end{tikzpicture}
      & \begin{tikzpicture}[xscale=0.5, yscale=0.5]
        \begin{knot}[
          clip width=5,
          clip radius=8pt,
          ]
          \node at (0, -0.5)  {\scalebox{0.5}{$i-1$}} ; 
          \node at (2 , -0.5) {\scalebox{0.5}{$i$}} ; 
          \node at (-2, 6.5 )  {\scalebox{0.5}{$i-1$}} ; 
          \node at (0 , 6.5) {\scalebox{0.5}{$i$}} ; 
          \node at (1 , 6.5) {\scalebox{0.5}{$i+1$}} ; 
          \node at (2 , 6.5) {\scalebox{0.5}{$i+2$}} ; 
          \node at (3.5 , 3) {\scalebox{1}{$=$}} ;
          Strand at (0, 6.0)
          \strand[thick] (-1, 1.47) to (-2, 2.13) to (-2, 2.4) to (-2, 2.67) to (-2, 3.33) to (-2, 3.6) to (-2, 3.87) to (-2, 4.53) to (-2, 4.8) to (-2, 5.07) to (0, 5.73) to (0, 6.0) ;
          
          Strand at (1, 6.0)
          \strand[thick] (0, 0.27) to (-1, 0.93) to (-1, 1.2) to (-1, 1.47) to (0, 2.13) to (0, 2.4) to (0, 2.67) to (0, 3.33) to (0, 3.6) to (0, 3.87) to (1, 4.53) to (1, 4.8) to (1, 5.07) to (1, 5.73) to (1, 6.0) ;
          
          Strand at (2, 6.0)
          \strand[thick] (0, 0.0) to (0, 0.27) to (1, 0.93) to (1, 1.2) to (1, 1.47) to (1, 2.13) to (1, 2.4) to (1, 2.67) to (2, 3.33) to (2, 3.6) to (2, 3.87) to (2, 4.53) to (2, 4.8) to (2, 5.07) to (2, 5.73) to (2, 6.0) ;
          
          Strand at (-2, 6.0)
          \strand[thick] (2, 0.0) to (2, 0.27) to (2, 0.93) to (2, 1.2) to (2, 1.47) to (2, 2.13) to (2, 2.4) to (2, 2.67) to (1, 3.33) to (1, 3.6) to (1, 3.87) to (0, 4.53) to (0, 4.8) to (0, 5.07) to (-2, 5.73) to (-2, 6.0) ;
        \end{knot} 
      \end{tikzpicture}
      & \begin{tikzpicture}[xscale=0.5, yscale=0.5]
        \begin{knot}[
          clip width=5,
          clip radius=8pt,
          ]
          
          \node at (0, -0.5)  {\scalebox{0.5}{$i-1$}} ; 
          \node at (3 , -0.5) {\scalebox{0.5}{$i$}} ; 
          \node at (-1, 6.5 )  {\scalebox{0.5}{$i-1$}} ; 
          \node at (0 , 6.5) {\scalebox{0.5}{$i$}} ; 
          \node at (2 , 6.5) {\scalebox{0.5}{$i+1$}} ; 
          \node at (3 , 6.5) {\scalebox{0.5}{$i+2$}} ; 
          Strand at (0, 6.0)
          \strand[thick] (0, 0.27) to (-1, 0.93) to (-1, 1.2) to (-1, 1.47) to (-1, 2.13) to (-1, 2.4) to (-1, 2.67) to (-1, 3.33) to (-1, 3.6) to (-1, 3.87) to (-1, 4.53) to (-1, 4.8) to (-1, 5.07) to (0, 5.73) to (0, 6.0) ;
          
          Strand at (2, 6.0)
          \strand[thick] (1, 1.47) to (0, 2.13) to (0, 2.4) to (0, 2.67) to (0, 3.33) to (0, 3.6) to (0, 3.87) to (2, 4.53) to (2, 4.8) to (2, 5.07) to (2, 5.73) to (2, 6.0) ;
          
          Strand at (3, 6.0)
          \strand[thick] (0, 0.0) to (0, 0.27) to (1, 0.93) to (1, 1.2) to (1, 1.47) to (2, 2.13) to (2, 2.4) to (2, 2.67) to (3, 3.33) to (3, 3.6) to (3, 3.87) to (3, 4.53) to (3, 4.8) to (3, 5.07) to (3, 5.73) to (3, 6.0) ;
          
          Strand at (-1, 6.0)
          \strand[thick] (3, 0.0) to (3, 0.27) to (3, 0.93) to (3, 1.2) to (3, 1.47) to (3, 2.13) to (3, 2.4) to (3, 2.67) to (2, 3.33) to (2, 3.6) to (2, 3.87) to (0, 4.53) to (0, 4.8) to (0, 5.07) to (-1, 5.73) to (-1, 6.0) ;
        \end{knot} 
      \end{tikzpicture}
    & \begin{tikzpicture}[xscale=0.5, yscale=0.5]
      \begin{knot}[
        clip width=5,
        clip radius=8pt,
        ]
        
        \node at (-1, -0.5)  {\scalebox{0.5}{$i-1$}} ; 
        \node at (1 , -0.5) {\scalebox{0.5}{$i$}} ; 
        \node at (-1, 4.1 )  {\scalebox{0.5}{$i-1$}} ; 
        \node at (0 , 4.1) {\scalebox{0.5}{$i$}} ; 
        \node at (1 , 4.1) {\scalebox{0.5}{$i+1$}} ; 
        \node at (3 , 4.1) {\scalebox{0.5}{$i+2$}} ;
        \node at (-2.5, 1.8)  {\scalebox{1}{$\Longleftarrow$}} ;
        \node at (-2.5, 2.4)  {\scalebox{1}{$\zeta$}} ;
        Strand at (0, 3.6)
        \strand[thick] (1, 1.47) to (0, 2.13) to (0, 2.4) to (0, 2.67) to (0, 3.33) to (0, 3.6) ;
        
        Strand at (1, 3.6)
        \strand[thick] (2, 2.67) to (1, 3.33) to (1, 3.6) ;
        
        Strand at (3, 3.6)
        \strand[thick] (-1, 0.0) to (-1, 0.27) to (1, 0.93) to (1, 1.2) to (1, 1.47) to (2, 2.13) to (2, 2.4) to (2, 2.67) to (3, 3.33) to (3, 3.6) ;
        
        Strand at (-1, 3.6)
        \strand[thick] (1, 0.0) to (1, 0.27) to (-1, 0.93) to (-1, 1.2) to (-1, 1.47) to (-1, 2.13) to (-1, 2.4) to (-1, 2.67) to (-1, 3.33) to (-1, 3.6) ;
      \end{knot} 
    \end{tikzpicture} 
  \end{tabular}
\]
\end{enumerate}
	
\subsection{$\Mag  \otimes_{\B{K}} \Sym \to \Sym  \otimes_{\B{K}} \Mag$} Similar to the previous case, most of the things we need to show are done in Section~\ref{subsect:MagToBraid}.  The only remaining relation we check in $\mathcal{I}_{\Sym}$ is 
\[ \zeta ( \partial_i^n\otimes \chi_j^n\chi_j^n) = \zeta (\partial_i^n\otimes 1_{n}) \]
for all $0\leq i \leq n$ and $0\leq j \leq n-1$ where $n \geq 1$. We examine this in $4$ cases as follows:

\begin{enumerate}[{Case}(1): ]
\item $i<j$
  \begin{align*}
    \zeta\left( \partial_i^n\otimes \chi_j^n\chi_j^n\right)
    = & \chi_{j+1}^{n+1} \chi_{j+1}^{n+1}  \otimes \partial_{i}^{n} \\
    = & 1_{n+1} \otimes \partial_{i}^{n} 
        =  \zeta\left(\partial_{i}^{n} \otimes 1_{n}\right)
  \end{align*}
  
\item $i>j+1$
  \begin{align*}
    \zeta\left( \partial_i^n\otimes \chi_j^n\chi_j^n\right)
    = & \chi_{j}^{n+1} \chi_{j}^{n+1}  \otimes \partial_{i}^{n}\\ 
    = & 1_{n+1} \otimes \partial_{i}^{n} 
        =  \zeta\left(\partial_{i}^{n} \otimes 1_{n}\right)
  \end{align*}
  
\item $i=j$
  \begin{align*}
    \zeta\left( \partial_i^n\otimes \chi_i^n\chi_i^n\right)
    = & \chi_{i+1}^{n+1} \chi_{i}^{n+1} \chi_{i}^{n+1} \chi_{i+1}^{n+1}  \otimes \partial_{i}^{n} 
        = \chi_{i+1}^{n+1}\chi_{i+1}^{n+1} \otimes \partial_{i}^{n}\\
    = & 1_{n+1} \otimes \partial_{i}^{n} 
        =  \zeta\left(\partial_{i}^{n} \otimes 1_{n}\right)
  \end{align*}
  
\item $i=j+1$
  \begin{align*}
    \zeta\left( \partial_i^n\otimes \chi_{i-1}^n\chi_{i-1}^n\right)
    = & \chi_{i-1}^{n+1} \chi_{i}^{n+1} \chi_{i}^{n+1} \chi_{i-1}^{n+1}  \otimes \partial_{i}^{n} 
        = \chi_{i-1}^{n+1}\chi_{i-1}^{n+1} \otimes \partial_{i}^{n}\\
    = & 1_{n+1} \otimes \partial_{i}^{n} 
        =  \zeta\left(\partial_{i}^{n} \otimes 1_{n}\right)
  \end{align*}
\end{enumerate}
	
\subsection{$\Simp  \otimes_{\B{K}} \Sym \to \Sym  \otimes_{\B{K}} \Simp$} 

We have already established that $\zeta: T(\partial) \otimes_{\B{K}} T(\chi) \to T(\chi) \otimes_{\B{K}} T(\partial)$ extends to both left transposition of the form $\zeta: T(\partial) \otimes_{\B{K}} \Sym \to \Sym \otimes_{\B{K}} T(\partial)$ and right transposition $\zeta: \Simp \otimes_{\B{K}} T(\chi) \to T(\chi) \otimes_{\B{K}} \Simp$. By Lemma~\ref{lem:distributive}, $\zeta$ induces a distributive law $\Simp  \otimes_{\B{K}} \Sym \to \Sym  \otimes_{\B{K}} \Simp$ as well.

\section{The Leibniz $\B{K}$-algebra  }\label{sect:LeibnizAlgebras}

From here on, by abuse of notation, we will write  $\partial_j^n$ for $1_{n+1}\otimes \partial_j^n$ and similarly $\chi_{j}^{n}$ for $\chi_{j}^{n} \otimes 1_{n}$ in $\Sym\otimes_{\zeta}\Mag$. Also, we are going to explicitly use the distributive law $\zeta$ between $\Sym$ and $\Mag$ for the sake of simplicity. However, the results of this section are valid for the braided case at no extra cost.

\subsection{$\Leib$ and $\Leib^{op}$}

Recall that $\Sym \otimes_{\zeta} \Mag $ is a $\B{K}$-algebra where the multiplication is determined by the distributive law $\zeta$ as described in Lemma~\ref{lem:CanonicalDistributiveLaw}. We define two ideals of
$\Sym \otimes_{\zeta} \Mag$ as follows:
\begin{equation}\label{eq:LeibRelations}
  I_{\Leib} := \left\langle \partial_{j+1}^{n+1} \partial_j^n-\left(1_{n+2} - \chi_{j+1}^{n+2}\right) \partial_j^{n+1} \partial_j^n\mid 0 \leq j \leq n,\ n\geq 0 \right\rangle
\end{equation}
and 
\begin{equation}\label{eq:LeibOpRelations}
  I_{\Leib^{op}} := \left\langle \partial_{j}^{n+1} \partial_j^n-\left(1_{n+2} - \chi_{j+1}^{n+2}\right) \partial_{j+1}^{n+1} \partial_j^n\mid 0 \leq j \leq n,\ n\geq 0 \right\rangle
\end{equation}
Then we define $\B{K}$-algebras $\Leib$ and $\Leib^{op}$ as the quotients $(\Sym \otimes_{\zeta} \Mag )/I_{\Leib }$ and $(\Sym \otimes_{\zeta} \Mag )/I_{\Leib^{op}}$, respectively. These $\B{K}$-algebras are called \emph{left Leibniz} and \emph{right Leibniz} $\B{K}$-algebras, respectively.

\begin{remark}
The element of the $I_{\Leib}$ with the smallest superscript $n=0$ is represented by the following string diagram:
\[\begin{tabular}{c c c}
  \begin{tikzpicture}[xscale=0.5, yscale=0.5]
    
    \begin{knot}[
      clip width=5,
      clip radius=8pt,
      ]
      
      \node at (0, -0.2)  {\scalebox{0.5}{$0$}} ; 
      \node at (-1, 3.8)  {\scalebox{0.5}{$0$}} ; 
      \node at (0 , 3.8) {\scalebox{0.5}{$1$}} ; 
      \node at (2 , 3.8) {\scalebox{0.5}{$2$}} ; 
      \node at (3 , 1.8) {\scalebox{1}{$-$}} ;
      \strand[thick] (0, 0.27) to (-1, 0.93) to (-1, 1.2) to (-1, 1.47) to (-1, 2.13) to (-1, 3.6) ;
      
      \strand[thick] (1, 1.47) to (0, 2.13) to (0, 3.6) ;
      
      \strand[thick] (0, 0.0) to (0, 0.27) to (1, 0.93) to (1, 1.2) to (1, 1.47) to (2, 2.13) to (2, 3.6) ;
      
    \end{knot} 
    
  \end{tikzpicture} 
  &\begin{tikzpicture}[xscale=0.5, yscale=0.5]
    
    \begin{knot}[
      clip width=5,
      clip radius=8pt,
      ]
      
      \node at (0, -0.2)  {\scalebox{0.5}{$0$}} ; 
      \node at (-2, 3.8)  {\scalebox{0.5}{$0$}} ; 
      \node at (0 , 3.8) {\scalebox{0.5}{$1$}} ; 
      \node at (1 , 3.8) {\scalebox{0.5}{$2$}} ;
      \node at (2 , 1.8) {\scalebox{1}{$+$}} ; 
      \strand[thick] (-1, 1.47) to (-2, 2.13) to (-2, 3.6) ;
      
      \strand[thick] (0, 0.27) to (-1, 0.93) to (-1, 1.2) to (-1, 1.47) to (0, 2.13) to (0, 3.6) ;
      
      \strand[thick] (0, 0.0) to (0, 0.27) to (1, 0.93) to (1, 1.2) to (1, 1.47) to (1, 2.13) to (1, 3.6) ;
      
    \end{knot} 
    
  \end{tikzpicture} 
  &\begin{tikzpicture}[xscale=0.5, yscale=0.5]
    
    \begin{knot}[
      clip width=5,
      clip radius=8pt,
      ]
      
      \node at (0, -0.2)  {\scalebox{0.5}{$0$}} ; 
      \node at (-2, 3.8)  {\scalebox{0.5}{$0$}} ; 
      \node at (0 , 3.8) {\scalebox{0.5}{$1$}} ; 
      \node at (1 , 3.8) {\scalebox{0.5}{$2$}} ; 
      \strand[thick] (-1, 1.47) to (-2, 2.13) to (-2, 2.4) to (-2, 2.67) to (-2, 3.33) to (-2, 3.6) ;
      
      \strand[thick] (0, 0.27) to (-1, 0.93) to (-1, 1.2) to (-1, 1.47) to (0, 2.13) to (0, 2.4) to (0, 2.67) to (1, 3.33) to (1, 3.6) ;
      
      \strand[thick] (0, 0.0) to (0, 0.27) to (1, 0.93) to (1, 1.2) to (1, 1.47) to (1, 2.13) to (1, 2.4) to (1, 2.67) to (0, 3.33) to (0, 3.6) ;
      
    \end{knot} 
    
  \end{tikzpicture} 
\end{tabular}
\]
Elementwise this string diagram corresponds to the (left)Leibniz identity $[x,[y,z]]-[[x,y],z]+[[x,z],y]=0$~\cite{Loday_1993}.
\end{remark}

\begin{proposition}\label{prop:automorphism}
  Consider the $\B{K}$-bimodule automorphism
  $\alpha\colon \Sym\otimes_\zeta\Mag\to\Sym\otimes_\zeta\Mag$ of
 defined on the generators by
  \[ \alpha(1_n) = 1_n, \quad \alpha(\brd{n}{j}) = \brd{n}{j}
    \quad\text{ and }\quad
    \alpha(\partial^n_j) = \brd{n+1}{j}\partial^n_j
  \]
  for any $n\geq 0$ and $0\leq j\leq n$.  Then $\alpha$ extends to an
  automorphism of $\B{K}$-algebras.
\end{proposition}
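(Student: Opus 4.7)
My plan is to present $\Sym\otimes_\zeta\Mag$ by the generators $1_n,\chi_j^n,\partial_j^n$ together with the three families of defining relations, namely (a) the $\Sym$-relations, (b) the $\Mag$-relations, and (c) the straightening identities $\partial_i^n\chi_j^n=\zeta(\partial_i^n\otimes\chi_j^n)$ imposed by the distributive law, and then to verify directly that the map $\alpha$ of the statement respects each family. Since $\alpha$ acts as the identity on every $\chi_j^n$ and every $1_n$, the relations in family (a) are preserved on sight.

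For family (b), I would take a relation $\partial_i^{n+1}\partial_j^n=\partial_{j+1}^{n+1}\partial_i^n$ with $i<j$ and compute $\alpha$ on both sides. The image of the left-hand side is $\chi_i^{n+2}\partial_i^{n+1}\chi_j^{n+1}\partial_j^n$, which, by one round of $\zeta$ in the case $i<j$, rewrites as $\chi_i^{n+2}\chi_{j+1}^{n+2}\partial_i^{n+1}\partial_j^n$. A parallel computation shows that the image of the right-hand side becomes $\chi_{j+1}^{n+2}\chi_i^{n+2}\partial_i^{n+1}\partial_j^n$ after reusing the same magmatic identity. These two expressions coincide because $|i-(j+1)|\geq 2$ forces the $\chi$-generators to commute in $\Sym$.

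For family (c), the check splits into the four cases of $\zeta$ from Lemma~\ref{lem:CanonicalDistributiveLaw}. In the ``far'' cases $i<j$ and $i>j+1$ the equality $\alpha(\zeta(\partial_i^n\otimes\chi_j^n))=\alpha(\partial_i^n)\chi_j^n$ again reduces to commutation of two distant $\chi$'s. In the ``close'' cases $i=j$ and $i=j+1$, each side becomes a triple product of three consecutive $\chi$-generators attached to the appropriate $\partial$, and the identity to be checked becomes precisely the Coxeter braid relation $\chi_k^m\chi_{k+1}^m\chi_k^m=\chi_{k+1}^m\chi_k^m\chi_{k+1}^m$. I expect these two close cases to carry essentially all the content of the verification, and they are the only step where anything more than commutativity is needed.

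Once $\alpha$ is known to extend to an algebra endomorphism, showing that it is an automorphism is immediate: it is an involution. Indeed $\alpha^2(\partial_j^n)=\alpha(\chi_j^{n+1})\,\alpha(\partial_j^n)=\chi_j^{n+1}\chi_j^{n+1}\partial_j^n=\partial_j^n$ by the $\Sym$-relation $\chi_j^{n+1}\chi_j^{n+1}=1_{n+1}$, and $\alpha^2$ visibly fixes the $\chi_j^n$ and the $1_n$. Hence $\alpha$ is its own inverse, and in particular a $\B{K}$-algebra automorphism of $\Sym\otimes_\zeta\Mag$.
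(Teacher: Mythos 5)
Your proposal is correct and follows essentially the same route as the paper: one checks that $\alpha$ respects the magmatic relations and the four cases of the distributive-law relations $\partial_i^n\chi_j^n=\zeta(\partial_i^n\otimes\chi_j^n)$, with the far cases reducing to commutation of distant $\chi$'s and the close cases $i=j$, $i=j+1$ to the braid relation, exactly as in the paper's case analysis. Your closing observation that $\alpha^2=\mathrm{id}$ (via $\chi_j^{n+1}\chi_j^{n+1}=1_{n+1}$) is a nice explicit justification of invertibility that the paper leaves implicit.
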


\begin{proof}
   If you would like to follow the equations below by drawing corresponding string diagrams for $\alpha$, the nontrivial parts depicted after relevant calculations. The diagram for the nonidentity part of $\alpha$ on the generators can be seen as follows:
\[\begin{tabular}{c c}
  	
    \begin{tikzpicture}[xscale=0.5, yscale=0.5]
      \begin{knot}[
        clip width=5,
        clip radius=8pt,
        ]
        \node at (0, -0.5)  {\scalebox{0.5}{$j$}} ; 
        \node at (-1, 2.9 )  {\scalebox{0.5}{$j$}} ; 
        \node at (1 , 2.9) {\scalebox{0.5}{$j+1$}} ; 
        \strand[thick] (0, 0.27) to (-1, 0.93) to (-1, 1.2) to (-1, 1.47) to (-1, 2.13) to (-1, 2.4) ;
        
        \strand[thick] (0, 0.0) to (0, 0.27) to (1, 0.93) to (1, 1.2) to (1, 1.47) to (1, 2.13) to (1, 2.4) ;
        
      \end{knot}  
    \end{tikzpicture}
    &\begin{tikzpicture}[xscale=0.5, yscale=0.5]
      \begin{knot}[
        clip width=5,
        clip radius=8pt,
        ]
        \node at (-3, 1.85 )  {\scalebox{1}{$\alpha$}} ;
        \node at (-3, 1.2 )  {\scalebox{1}{$\Longrightarrow$}} ;
        \node at (0, -0.5)  {\scalebox{0.5}{$j$}} ; 
        \node at (-1, 2.9 )  {\scalebox{0.5}{$j$}} ; 
        \node at (1 , 2.9) {\scalebox{0.5}{$j+1$}} ; 
        \strand[thick] (0, 0.27) to (-1, 0.93) to (-1, 1.2) to (-1, 1.47) to (1, 2.13) to (1, 2.4) ;
        
        \strand[thick] (0, 0.0) to (0, 0.27) to (1, 0.93) to (1, 1.2) to (1, 1.47) to (-1, 2.13) to (-1, 2.4) ;
        
      \end{knot} 
      
    \end{tikzpicture}
  \end{tabular} 
\]
We must prove that $\alpha$ preserves the relations in
$\Sym\otimes_\zeta\Mag$. We start with the relations in $\Mag$:
for $i<j$ we obtain
\begin{align*}
  \alpha(\partial^{n+1}_i\partial^n_j)
    = & \brd{n+2}{i}\partial^{n+1}_i\brd{n+1}{j}\partial^n_j
    = \brd{n+2}{i}\brd{n+2}{j+1}\partial^{n+1}_i\partial^n_j\\
    = & \brd{n+2}{j+1}\brd{n+2}{i}\partial^{n+1}_{j+1}\partial^n_i
    = \brd{n+2}{j+1}\partial^{n+1}_{j+1}\brd{n+1}{i}\partial^n_i\\
    = & \alpha(\partial^{n+1}_{j+1}\partial^n_i)
\end{align*}
by using the distributive law $\zeta$.  As for the interaction
between $\brd{n}{i}$ and $\partial^n_j$, we consider 4 different
cases: 
\begin{enumerate}[{Case} (1)]
\item $i<j$.
  \begin{align*}
    \alpha(\partial^n_i\brd{n}{j})
    = & \brd{n+1}{i}\partial^n_i\brd{n}{j}
        = \brd{n+1}{i}\brd{n}{j+1}\partial^n_i\\
    = & \brd{n+1}{j+1}\brd{n}{i}\partial^n_i
        = \alpha(\brd{n+1}{j+1}\partial^n_i)
  \end{align*}
\item $i=j$.
  \begin{align*}
    \alpha(\partial^n_i\brd{n}{i})
    = & \brd{n+1}{i}\partial^n_i\brd{n}{i}
        = \brd{n+1}{i}\brd{n+1}{i+1}\brd{n+1}{i}\partial^n_{i+1}\\
    = & \brd{n+1}{i+1}\brd{n+1}{i}\brd{n+1}{i+1}\partial^n_{i+1}
        = \alpha(\brd{n+1}{i+1}\brd{n+1}{i}\partial^n_{i+1})
  \end{align*}
  \[\begin{tabular}{c c c c}
      \begin{tikzpicture}[xscale=0.5, yscale=0.5]
        
        \begin{knot}[
          clip width=5,
          clip radius=8pt,
          ]
          
          \node at (0, -0.5)  {\scalebox{0.5}{$i$}} ; 
          \node at (2 , -0.5) {\scalebox{0.5}{$i+1$}} ; 
          \node at (-1, 2.9 )  {\scalebox{0.5}{$i$}} ; 
          \node at (1 , 2.9) {\scalebox{0.5}{$i+1$}} ; 
          \node at (2 , 2.9) {\scalebox{0.5}{$i+2$}} ; 
          \strand[thick] (0, 0.0) to (0, 0.27) to (2, 0.93) to (2, 1.2) to (2, 1.47) to (2, 2.13) to (2, 2.4) ;
          
          \strand[thick] (0, 1.47) to (-1, 2.13) to (-1, 2.4) ;
          
          \strand[thick] (2, 0.0) to (2, 0.27) to (0, 0.93) to (0, 1.2) to (0, 1.47) to (1, 2.13) to (1, 2.4) ;
          
        \end{knot} 
        
      \end{tikzpicture} 
      & \begin{tikzpicture}[xscale=0.5, yscale=0.5]
    	
    	\begin{knot}[
          clip width=5,
          clip radius=8pt,
          ]
          \node at (4, 1.2 )  {\scalebox{1}{$=$}} ;
          \node at (-3, 1.85 )  {\scalebox{1}{$\alpha$}} ;
          \node at (-3, 1.2 )  {\scalebox{1}{$\Longrightarrow$}} ;
          \node at (0, -0.5)  {\scalebox{0.5}{$i$}} ; 
          \node at (2 , -0.5) {\scalebox{0.5}{$i+1$}} ; 
          \node at (-1, 4.1 )  {\scalebox{0.5}{$i$}} ; 
          \node at (1 , 4.1) {\scalebox{0.5}{$i+1$}} ; 
          \node at (2 , 4.1) {\scalebox{0.5}{$i+2$}} ; 
          
          \strand[thick] (0, 0.0) to (0, 0.27) to (2, 0.93) to (2, 1.2) to (2, 1.47) to (2, 2.13) to (2, 2.4) to (2, 2.67) to (2, 3.33) to (2, 3.6) ;
          
          \strand[thick] (0, 1.47) to (-1, 2.13) to (-1, 2.4) to (-1, 2.67) to (1, 3.33) to (1, 3.6) ;
          
          \strand[thick] (2, 0.0) to (2, 0.27) to (0, 0.93) to (0, 1.2) to (0, 1.47) to (1, 2.13) to (1, 2.4) to (1, 2.67) to (-1, 3.33) to (-1, 3.6) ;
          
    	\end{knot} 
    	
      \end{tikzpicture} 
      & \begin{tikzpicture}[xscale=0.5, yscale=0.5]
    	
    	\begin{knot}[
          clip width=5,
          clip radius=8pt,
          ]
          
          \node at (-1, -0.5)  {\scalebox{0.5}{$i$}} ; 
          \node at (1 , -0.5) {\scalebox{0.5}{$i+1$}} ; 
          \node at (-1, 5.3 )  {\scalebox{0.5}{$i$}} ; 
          \node at (0 , 5.3) {\scalebox{0.5}{$i+1$}} ; 
          \node at (2 , 5.3) {\scalebox{0.5}{$i+2$}} ; 
          \strand[thick] (-1, 0.0) to (-1, 0.27) to (-1, 0.93) to (-1, 1.2) to (-1, 1.47) to (-1, 2.13) to (-1, 2.4) to (-1, 2.67) to (0, 3.33) to (0, 3.6) to (0, 3.87) to (2, 4.53) to (2, 4.8) ;
          
          \strand[thick] (1, 0.27) to (0, 0.93) to (0, 1.2) to (0, 1.47) to (2, 2.13) to (2, 2.4) to (2, 2.67) to (2, 3.33) to (2, 3.6) to (2, 3.87) to (0, 4.53) to (0, 4.8) ;
          
          \strand[thick] (1, 0.0) to (1, 0.27) to (2, 0.93) to (2, 1.2) to (2, 1.47) to (0, 2.13) to (0, 2.4) to (0, 2.67) to (-1, 3.33) to (-1, 3.6) to (-1, 3.87) to (-1, 4.53) to (-1, 4.8) ;
          
    	\end{knot} 
    	
      \end{tikzpicture}
      & \begin{tikzpicture}[xscale=0.5, yscale=0.5]
	
	\begin{knot}[
          clip width=5,
          clip radius=8pt,
          ]
          \node at (-3, 1.85 )  {\scalebox{1}{$\alpha$}} ;
          \node at (-3, 1.2 )  {\scalebox{1}{$\Longleftarrow$}} ;
          \node at (-1, -0.5)  {\scalebox{0.5}{$i$}} ; 
          \node at (1 , -0.5) {\scalebox{0.5}{$i+1$}} ; 
          \node at (-1, 4.1 )  {\scalebox{0.5}{$i$}} ; 
          \node at (0 , 4.1) {\scalebox{0.5}{$i+1$}} ; 
          \node at (2 , 4.1) {\scalebox{0.5}{$i+2$}} ; 
          \strand[thick] (-1, 0.0) to (-1, 0.27) to (-1, 0.93) to (-1, 1.2) to (-1, 1.47) to (0, 2.13) to (0, 2.4) to (0, 2.67) to (2, 3.33) to (2, 3.6) ;
          
          \strand[thick] (1, 0.27) to (0, 0.93) to (0, 1.2) to (0, 1.47) to (-1, 2.13) to (-1, 2.4) to (-1, 2.67) to (-1, 3.33) to (-1, 3.6) ;
          
          \strand[thick] (1, 0.0) to (1, 0.27) to (2, 0.93) to (2, 1.2) to (2, 1.47) to (2, 2.13) to (2, 2.4) to (2, 2.67) to (0, 3.33) to (0, 3.6) ;
          
	\end{knot} 	
      \end{tikzpicture}         
    \end{tabular}
  \]
    
  \item $i=j+1$.
    \begin{align*}
      \alpha(\partial^n_{j+1}\brd{n}{j})
      = & \brd{n+1}{j+1}\partial^n_{j+1}\brd{n}{j}
          = \brd{n+1}{j+1}\brd{n+1}{j}\brd{n+1}{j+1}\partial^n_j\\
      = & \brd{n+1}{j}\brd{n+1}{j+1}\brd{n+1}{j}\partial^n_j
          = \alpha(\brd{n+1}{j}\brd{n+1}{j+1}\partial^n_j)
    \end{align*}
    
    \[ \begin{tabular}{c c c c}
         \begin{tikzpicture}[xscale=0.5, yscale=0.5]
           
           \begin{knot}[
             clip width=5,
             clip radius=8pt,
             ]
             
             \node at (-1, -0.5)  {\scalebox{0.5}{$j$}} ; 
             \node at (1 , -0.5) {\scalebox{0.5}{$j+1$}} ; 
             \node at (-1, 2.9 )  {\scalebox{0.5}{$j$}} ; 
             \node at (0 , 2.9) {\scalebox{0.5}{$j+1$}} ; 
             \node at (2 , 2.9) {\scalebox{0.5}{$j+2$}} ; 
             \strand[thick] (1, 1.47) to (0, 2.13) to (0, 2.4) ;
             
             \strand[thick] (-1, 0.0) to (-1, 0.27) to (1, 0.93) to (1, 1.2) to (1, 1.47) to (2, 2.13) to (2, 2.4) ;
             
             \strand[thick] (1, 0.0) to (1, 0.27) to (-1, 0.93) to (-1, 1.2) to (-1, 1.47) to (-1, 2.13) to (-1, 2.4) ;
             
           \end{knot} 
           
         \end{tikzpicture}
         & \begin{tikzpicture}[xscale=0.5, yscale=0.5]
           
           \begin{knot}[
             clip width=5,
             clip radius=8pt,
             ]
             \node at (4, 1.2 )  {\scalebox{1}{$=$}} ;
             \node at (-3, 1.85 )  {\scalebox{1}{$\alpha$}} ;
             \node at (-3, 1.2 )  {\scalebox{1}{$\Longrightarrow$}} ;
             \node at (-1, -0.5)  {\scalebox{0.5}{$j$}} ; 
             \node at (1 , -0.5) {\scalebox{0.5}{$j+1$}} ; 
             \node at (-1, 4.1 )  {\scalebox{0.5}{$j$}} ; 
             \node at (0 , 4.1) {\scalebox{0.5}{$j+1$}} ; 
             \node at (2 , 4.1) {\scalebox{0.5}{$j+2$}} ; 
             \strand[thick] (1, 1.47) to (0, 2.13) to (0, 2.4) to (0, 2.67) to (2, 3.33) to (2, 3.6) ;
             
             \strand[thick] (-1, 0.0) to (-1, 0.27) to (1, 0.93) to (1, 1.2) to (1, 1.47) to (2, 2.13) to (2, 2.4) to (2, 2.67) to (0, 3.33) to (0, 3.6) ;
             
             \strand[thick] (1, 0.0) to (1, 0.27) to (-1, 0.93) to (-1, 1.2) to (-1, 1.47) to (-1, 2.13) to (-1, 2.4) to (-1, 2.67) to (-1, 3.33) to (-1, 3.6) ;
             
           \end{knot} 
           
         \end{tikzpicture}
         & \begin{tikzpicture}[xscale=0.5, yscale=0.5]
           
           \begin{knot}[
             clip width=5,
             clip radius=8pt,
             ]
             
             \node at (0, -0.5)  {\scalebox{0.5}{$j$}} ; 
             \node at (2 , -0.5) {\scalebox{0.5}{$j+1$}} ; 
             \node at (-1, 5.3 )  {\scalebox{0.5}{$j$}} ; 
             \node at (1 , 5.3) {\scalebox{0.5}{$j+1$}} ; 
             \node at (2 , 5.3) {\scalebox{0.5}{$j+2$}} ; 
             \strand[thick] (0, 0.27) to (-1, 0.93) to (-1, 1.2) to (-1, 1.47) to (1, 2.13) to (1, 2.4) to (1, 2.67) to (2, 3.33) to (2, 3.6) to (2, 3.87) to (2, 4.53) to (2, 4.8) ;
             
             \strand[thick] (0, 0.0) to (0, 0.27) to (1, 0.93) to (1, 1.2) to (1, 1.47) to (-1, 2.13) to (-1, 2.4) to (-1, 2.67) to (-1, 3.33) to (-1, 3.6) to (-1, 3.87) to (1, 4.53) to (1, 4.8) ;
             
             \strand[thick] (2, 0.0) to (2, 0.27) to (2, 0.93) to (2, 1.2) to (2, 1.47) to (2, 2.13) to (2, 2.4) to (2, 2.67) to (1, 3.33) to (1, 3.6) to (1, 3.87) to (-1, 4.53) to (-1, 4.8) ;
             
           \end{knot}            
         \end{tikzpicture}
         & \begin{tikzpicture}[xscale=0.5, yscale=0.5]
           
           \begin{knot}[
             clip width=5,
             clip radius=8pt,
             ]
             \node at (-3, 1.85 )  {\scalebox{1}{$\alpha$}} ;
             \node at (-3, 1.2 )  {\scalebox{1}{$\Longleftarrow$}} ;
             \node at (0, -0.5)  {\scalebox{0.5}{$j$}} ; 
             \node at (2 , -0.5) {\scalebox{0.5}{$j+1$}} ; 
             \node at (-1, 4.1 )  {\scalebox{0.5}{$j$}} ; 
             \node at (1 , 4.1) {\scalebox{0.5}{$j+1$}} ; 
             \node at (2 , 4.1) {\scalebox{0.5}{$j+2$}} ; 
             \strand[thick] (0, 0.27) to (-1, 0.93) to (-1, 1.2) to (-1, 1.47) to (-1, 2.13) to (-1, 2.4) to (-1, 2.67) to (1, 3.33) to (1, 3.6) ;
             
             \strand[thick] (0, 0.0) to (0, 0.27) to (1, 0.93) to (1, 1.2) to (1, 1.47) to (2, 2.13) to (2, 2.4) to (2, 2.67) to (2, 3.33) to (2, 3.6) ;
             
             \strand[thick] (2, 0.0) to (2, 0.27) to (2, 0.93) to (2, 1.2) to (2, 1.47) to (1, 2.13) to (1, 2.4) to (1, 2.67) to (-1, 3.33) to (-1, 3.6) ;
             
           \end{knot} 
           
         \end{tikzpicture} 
       \end{tabular}
     \]
    
  \item $i>j+1$.
    \begin{align*}
      \alpha(\partial^n_i\brd{n}{j})
      = & \brd{n+1}{i}\partial^n_i\brd{n}{j}
          = \brd{n+1}{i}\brd{n+1}{j}\partial^n_i \\
      = & \brd{n+1}{j}\brd{n+1}{i}\partial^n_i
          = \alpha(\brd{n+1}{j}\partial^n_i)
    \end{align*}
  \end{enumerate}
  The result follows.
\end{proof}
	
\begin{proposition}
  The left and right Leibniz $\B{K}$-algebras $\Leib$ and $\Leib^{op}$ are isomorphic via $\alpha$ as we defined in Proposition~\ref{prop:automorphism}.    
\end{proposition}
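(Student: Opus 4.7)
The plan is to show that $\alpha$ descends to an isomorphism between the quotients $\Leib = (\Sym\otimes_\zeta\Mag)/I_\Leib$ and $\Leib^{op} = (\Sym\otimes_\zeta\Mag)/I_{\Leib^{op}}$. Since $\alpha$ is already an algebra automorphism of $\Sym\otimes_\zeta\Mag$ by Proposition~\ref{prop:automorphism}, it suffices to verify $\alpha(I_\Leib) = I_{\Leib^{op}}$.

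First I would observe that $\alpha$ is an involution, i.e.\ $\alpha^2 = \id$. Indeed,
\[ \alpha^2(\partial^n_j) = \alpha(\chi^{n+1}_j\partial^n_j) = \chi^{n+1}_j\alpha(\partial^n_j) = \chi^{n+1}_j\chi^{n+1}_j\partial^n_j = \partial^n_j, \]
using $(\chi^{n+1}_j)^2 = 1_{n+1}$ in $\Sym$, and $\alpha$ already fixes $1_n$ and $\chi^n_j$. Consequently, it is enough to prove the single inclusion $\alpha(I_\Leib)\subseteq I_{\Leib^{op}}$; applying $\alpha$ once more gives $I_\Leib\subseteq \alpha(I_{\Leib^{op}})$, and the symmetric computation (swapping the roles of $\partial^{n+1}_{j+1}\partial^n_j$ and $\partial^{n+1}_j\partial^n_j$) produces the opposite inclusion.

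The main computation is $\alpha$ applied to a generator $g_{j,n} := \partial_{j+1}^{n+1}\partial_j^n - (1_{n+2}-\chi_{j+1}^{n+2})\partial_j^{n+1}\partial_j^n$ of $I_\Leib$. I would first handle the two quadratic $\partial$-words separately. Expanding $\alpha(\partial_{j+1}^{n+1})\alpha(\partial_j^n) = \chi^{n+2}_{j+1}\partial^{n+1}_{j+1}\chi^{n+1}_j\partial^n_j$ and commuting $\chi^{n+1}_j$ past $\partial^{n+1}_{j+1}$ via Lemma~\ref{lem:CanonicalDistributiveLaw} (case $i=j+1$) gives a three-letter $\chi$-prefactor; collapsing it with the braid relation $\chi^{n+2}_{j+1}\chi^{n+2}_j\chi^{n+2}_{j+1} = \chi^{n+2}_j\chi^{n+2}_{j+1}\chi^{n+2}_j$ produces
\[ \alpha(\partial_{j+1}^{n+1}\partial_j^n) = A\,\partial_j^{n+1}\partial_j^n, \qquad A := \chi^{n+2}_j\chi^{n+2}_{j+1}\chi^{n+2}_j. \]
The symmetric computation for $\alpha(\partial^{n+1}_j\partial^n_j)$, using case $i=j$ of $\zeta$, gives $\alpha(\partial^{n+1}_j\partial^n_j) = A\,\partial^{n+1}_{j+1}\partial^n_j$.

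Finally, I would assemble these pieces. Applying $\alpha$ to the third summand in $g_{j,n}$ produces $\chi^{n+2}_{j+1}A\,\partial^{n+1}_{j+1}\partial^n_j$, and the identity $\chi^{n+2}_{j+1}A = \chi^{n+2}_j\chi^{n+2}_{j+1}$ (an immediate consequence of the braid and involution relations) combines with the previous terms to write $\alpha(g_{j,n})$ as $A$ times the corresponding generator $h_{j,n}$ of $I_{\Leib^{op}}$ (up to the $\Sym$-relations). Since $A$ is invertible in $\Sym$, we conclude $\alpha(g_{j,n}) \in I_{\Leib^{op}}$. The main obstacle is the careful bookkeeping of braid-relation substitutions to ensure the prefactors line up correctly across all three summands of $g_{j,n}$; this is exactly the kind of lengthy but local algebraic manipulation illustrated by the string diagrams used throughout Section~\ref{sect:DistributiveLaws}. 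With the inclusion established on generators, the two-sided ideal containment follows and $\alpha$ descends to the desired isomorphism $\Leib\xrightarrow{\sim}\Leib^{op}$, with inverse provided by $\alpha$ itself.
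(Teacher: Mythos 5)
Your proposal is correct and takes essentially the same route as the paper: apply the algebra automorphism $\alpha$ to the generators of $I_{\Leib}$, use the cases $i=j$ and $i=j+1$ of the distributive law $\zeta$ to obtain $\alpha(\partial^{n+1}_{j+1}\partial^n_j)=A\,\partial^{n+1}_j\partial^n_j$ and $\alpha(\partial^{n+1}_j\partial^n_j)=A\,\partial^{n+1}_{j+1}\partial^n_j$ with the invertible prefactor $A=\chi^{n+2}_j\chi^{n+2}_{j+1}\chi^{n+2}_j$, and conclude that the ideals are exchanged; your observation that $\alpha^{2}=\mathrm{id}$ is a clean way to secure both inclusions, which the paper leaves implicit. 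One caveat: the assembled expression is $A\bigl(\partial^{n+1}_j\partial^n_j-(1_{n+2}-\chi^{n+2}_j)\partial^{n+1}_{j+1}\partial^n_j\bigr)$ — with $\chi^{n+2}_j$, not $\chi^{n+2}_{j+1}$ — exactly as in the paper's final display, so your parenthetical ``up to the $\Sym$-relations'' does not literally close the gap with the generator of $I_{\Leib^{op}}$ as written in Equation~\eqref{eq:LeibOpRelations}; this index discrepancy is shared with (and presumably a typo in) the paper rather than a defect specific to your argument, but it is worth stating the target relation you actually land on.
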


\begin{proof}
  We know that $\alpha$ is an automorphism of the $\B{K}$-algebra $\Sym\otimes_\zeta\Mag$.  We will show that $\alpha$ maps the ideals $I_{\Leib}$ and $I_{\Leib^{op}}$ to each other, thus proving our statement.  So we apply $\alpha$ to each generator of $I_{\Leib}$
  \begin{align*}
    \alpha(\partial^{n+1}_{j+1}\partial^n_j - & (1_{n+2}-\brd{n+2}{j+1})\partial^{n+1}_j\partial^n_j)\\
    = & \brd{n+2}{j+1}\brd{n+1}{j}\brd{n+1}{j+1}\partial^{n+1}_j\partial^n_j 
	- (1_{n+2}-\brd{n+2}{j+1})\brd{n+2}{j}\brd{n+2}{j+1}\brd{n+2}{j}\partial^{n+1}_{j+1}\partial^n_j\\
    = & \brd{n+2}{j}\brd{n+1}{j+1}\brd{n+1}{j}
 	\left(\partial^{n+1}_j\partial^n_j - (1_{n+2}-\brd{n+2}{j})\partial^{n+1}_{j+1}\partial^n_j\right)
  \end{align*}
  for any $0\leq j\leq n$.
\end{proof}
	
\subsection{A different presentation for $\Leib$}
	
\begin{proposition}\label{prop:simpbasis}
 The product $\Sym\otimes_\zeta \Mag$ has a basis that consists of monomials of the form 
  \begin{equation}\label{eq:leibbase}
  	\tau^{m+1} \partial^m_{j_m}\cdots \partial^n_{j_n}
  \end{equation} 
  where $m\geq n-1$, $\tau^{m+1} \in S_{m+2}$ and $j_m\geq \cdots \geq j_n$. Based on this fact, $\Sym\otimes_\zeta\Simp$ has a basis that consists of monomials of the form 
\[ \tau^{m+1} \partial^m_{j_m}\cdots \partial^n_{j_n} \]
  where $m\geq n-1$, $\tau^{m+2} \in S_{m+1}$ and this time we have $j_m> \cdots > j_n$.     
\end{proposition}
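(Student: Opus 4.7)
The plan is to exploit the fact that $\zeta$ alters only the multiplicative structure on $\Sym\otimes_\zeta\Mag$, leaving the underlying $\B{K}$-bimodule unchanged. By Proposition~\ref{prop:algebradistributive}, that bimodule is just $\Sym\otimes_\B{K}\Mag$, so producing a $\Bbbk$-basis of $\Sym\otimes_\zeta\Mag$ reduces to producing a basis of a tensor product of two faithful $\B{K}$-bimodules with known factor bases. Writing out the bidegree $(p,q)$ component $\bigoplus_k \Sym_{p,k}\otimes \Mag_{k,q}$ and noting that $\Sym_{p,k}=0$ unless $k=p$, this collapses to $\Sym_{p,p}\otimes \Mag_{p,q}$.

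First I would plug in explicit bases of the factors. The diagonal piece $\Sym_{p,p}$ has the standard basis of permutations coming from its Coxeter presentation, and Proposition~\ref{prop:MagStraighten} provides a preferred basis of $\Mag_{p,q}$ consisting of the straightened monomials $\partial^{m}_{j_m}\cdots\partial^{n}_{j_n}$ with $m+1=p$, $n=q$ and $j_m\geq\cdots\geq j_n$, together with the trivial monomial $1_n$ in the diagonal case $p=q$. A basis of the tensor product is then obtained by pairing a permutation with a straightened magmatic monomial of matching middle degree, which is exactly the content of~\eqref{eq:leibbase}.

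To reconcile the bimodule tensor $\tau\otimes\partial^{m}_{j_m}\cdots\partial^{n}_{j_n}$ with the algebra-level monomial $\tau\cdot\partial^{m}_{j_m}\cdots\partial^{n}_{j_n}$, I would invoke unitality of $\zeta$: the two canonical injections $\Sym,\Mag\hookrightarrow \Sym\otimes_\zeta\Mag$ satisfy $\tau\cdot M=\tau\otimes M$, so the algebra-level expressions are literally the bimodule basis vectors, and this is already the convention introduced at the start of Section~\ref{sect:LeibnizAlgebras}. For $\Sym\otimes_\zeta\Simp$ the same argument applies verbatim with Proposition~\ref{prop:SimpStraighten} in place of Proposition~\ref{prop:MagStraighten}; the strict inequalities $j_m>\cdots>j_n$ in the preferred basis of $\Simp$ propagate unchanged to the tensor product.

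The main obstacle, such as it is, lies in convincingly stating that the twisted product and the honest bimodule tensor share the same underlying bimodule structure; once that identification is clean, the remainder is a bookkeeping statement about tensor products of faithful bimodules together with the normal-form results already proved, and in particular no fresh calculation involving $\zeta$ is needed at this stage.
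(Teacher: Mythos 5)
Your argument is correct, but it follows a different route from the paper's. The paper proves the proposition by a rewriting argument: it uses the explicit form of the distributive law~\eqref{eq:CanonicalDistributiveLaw} to straighten an arbitrary mixed word in the $\chi$'s and $\partial$'s, pushing all $\chi$'s to the left and all $\partial$'s to the right, then straightens the magmatic (resp.\ presimplicial) part via Propositions~\ref{prop:MagStraighten} and~\ref{prop:SimpStraighten} and regroups the left factor as $\tau^{m+1}$; this shows the claimed monomials span, with linear independence left implicit in the underlying tensor decomposition. You instead argue structurally: by Proposition~\ref{prop:algebradistributive} the twisted product $\Sym\otimes_\zeta\Mag$ has $\Sym\otimes_{\B{K}}\Mag$ as its underlying $\B{K}$-bimodule, whose bidegree $(p,q)$ part collapses to $\Sym_{p,p}\otimes\Mag_{p,q}$ because $\Sym$ is concentrated on the diagonal, so a basis is obtained by tensoring the permutation basis of $\Sym_{p,p}\cong\Bbbk[S_{p+1}]$ with the straightened basis of $\Mag_{p,q}$ (resp.\ $\Simp_{p,q}$); unitality of $\zeta$ then identifies the tensor $\tau\otimes\partial^m_{j_m}\cdots\partial^n_{j_n}$ with the algebra-level product $\tau\,\partial^m_{j_m}\cdots\partial^n_{j_n}$, since the two canonical embeddings of $\Sym$ and $\Mag$ are multiplicative and $(\tau\otimes 1)(1\otimes M)=\tau\otimes M$. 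This buys you both spanning and linear independence in one stroke and avoids any computation with the nontrivial cases of $\zeta$, whereas the paper's straightening proof has the side benefit of exhibiting the explicit normal-form algorithm for arbitrary mixed words, which is the mechanism reused later (e.g.\ in Proposition~\ref{prop:leibbasis} and Theorem~\ref{thm:MainResult}). The only points worth making explicit in your write-up are the standard fact that the diagonal components of $\Sym$, presented by the Coxeter relations with $\chi_i^2=1$, are the group algebras $\Bbbk[S_{p+1}]$ with permutations as basis (the paper assumes this silently as well), and the degree bookkeeping $m+1=p$, $n=q$ matching $\tau^{m+1}\in S_{m+2}$.
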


\begin{proof}
 Note that the relations of the distributive law given in Equation~\eqref{eq:CanonicalDistributiveLaw} indicate that we can straighten the arbitrary mixed monomials of $\brd{n}{i}$ and $\partial^n_j$ where $\brd{n}{i}$'s move to the left and $\partial^n_j$'s move to the right. Once this is done, we can straighten $\partial^n_j$'s using Propositions~\ref{prop:MagStraighten} and~\ref{prop:SimpStraighten}. We regroup the monomials of  $\brd{n}{i}$ on the left and call it $\tau^{m+1}$. Hence we can always obtain monomials of the form $\tau^{m+1} \partial^m_{j_m}\cdots \partial^n_{j_n}$ with particular subindices described above.
\end{proof}

\begin{proposition}\label{prop:leibbasis}
  $\Leib^{op}$ has a basis that consists of monomials given in Equation~\eqref{eq:leibbase} where $m\geq n-1$, $\tau^{m+1} \in S_{m+1}$ and this time $j_m>\cdots>j_n$. Thus the $\B{K}$-algebras $\Leib^{op}$ and $\Sym\otimes_\zeta \Simp$ have  the same $\Bbbk$-basis.
\end{proposition}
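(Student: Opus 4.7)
The plan is to prove both spanning and linear independence, starting from the preferred basis of $\Sym\otimes_\zeta\Mag$ furnished by Proposition~\ref{prop:simpbasis}.

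For spanning, I would begin with an arbitrary monomial $\tau^{m+1}\partial^m_{j_m}\cdots\partial^n_{j_n}$ with $j_m\geq\cdots\geq j_n$. If there exists $k$ with $j_{k+1}=j_k=j$, the generator of $I_{\Leib^{op}}$ indexed by $(k,j)$ gives the congruence
\[ \partial^{k+1}_j\partial^k_j \;\equiv\; \partial^{k+1}_{j+1}\partial^k_j - \chi^{k+2}_{j+1}\partial^{k+1}_{j+1}\partial^k_j \pmod{I_{\Leib^{op}}}. \]
Substituting this into the full monomial, using the distributive law $\zeta$ of Lemma~\ref{lem:CanonicalDistributiveLaw} to push the resulting $\chi$ factor leftward into $\tau^{m+1}$, and then re-straightening the $\partial$-part via Proposition~\ref{prop:MagStraighten} (which may cascade upward as the new subscript $j+1$ bubbles past $j_{k+2},j_{k+3},\ldots$) produces a linear combination of monomials of the same general shape. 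Termination is controlled by the fact that each rewrite strictly increases the upper subscript of the affected adjacent pair, while all subscripts remain bounded by position ($j_k\leq k$); hence iteration ends in a linear combination of monomials with strictly decreasing $j_m>\cdots>j_n$.

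For linear independence, my plan is to equip the $\Bbbk$-vector space $M$ spanned by the proposed strict-inequality monomials with a $\Sym\otimes_\zeta\Mag$-action in which every generator of $I_{\Leib^{op}}$ acts as zero. The action is forced: on a normal-form basis vector $\tau^{m+1}\partial^m_{j_m}\cdots\partial^n_{j_n}$, left multiplication by $\partial^{m+1}_i$ is defined by Mag straightening when $i\neq j_m$ and by the Leibniz substitution when $i=j_m$, with the freshly produced $\chi$ then absorbed into the leading permutation via $\zeta$; left multiplication by $\chi^{m+1}_i$ just augments $\tau^{m+1}$. The universal property then yields a surjection $\Leib^{op}\twoheadrightarrow M$ sending each strict-inequality monomial to its corresponding basis vector of $M$. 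Combined with the spanning step, this proves the basis statement and, reading off the indexing set, gives the equinumerosity with the basis of $\Sym\otimes_\zeta\Simp$ from Proposition~\ref{prop:simpbasis}.

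The principal obstacle will be verifying well-definedness of the action on $M$, which amounts to a confluence (diamond lemma) check for the associated rewriting system. The critical pairs to examine are (i) two nested Leibniz applications on a triply-repeated pattern $\partial^{k+2}_j\partial^{k+1}_j\partial^k_j$, where reducing the upper pair first versus the lower pair first must agree in $M$; (ii) overlaps of a Leibniz rewrite with subsequent Mag straightenings that bubble the new subscript upward past further $\partial^{k+2}_{j_{k+2}}$'s; and (iii) interactions with $\zeta$ as the produced $\chi^{k+2}_{j+1}$ factors are brought past the newly generated $\partial^{k+1}_{j+1}$. Once these are reconciled, normal forms of the rewriting system are precisely the strict-inequality monomials, completing the proof.
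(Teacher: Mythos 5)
Your spanning step is essentially the paper's own proof: the paper likewise straightens only the $\Mag$ part, replacing an offending pair $\partial^{u+1}_{j}\partial^{u}_{j}$ by $(1_{u+2}-\brd{u+2}{j+1})\partial^{u+1}_{j+1}\partial^{u}_{j}$, and it organizes termination by induction on the length of the $\Mag$-part, using the observation that such a rewrite leaves the portion of the monomial to the right of the affected pair untouched, so one settles the rightmost letter first and then applies the induction hypothesis to the shorter remaining word. That bookkeeping is safer than your proposed measure: after the substitution you must push $\brd{u+2}{j+1}$ leftward through the remaining $\partial$'s via $\zeta$, and by Lemma~\ref{lem:CanonicalDistributiveLaw} this can shift the subscripts of those higher $\partial$'s by $\pm 1$, so ``each rewrite strictly increases the upper subscript of the affected adjacent pair'' does not by itself control the process; the paper's induction (or a more carefully chosen monovariant) is what actually closes the termination argument.

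Where you genuinely diverge is the independence half. The paper's proof of this proposition consists of the straightening argument alone; it does not separately verify that the strict-inequality monomials are linearly independent. Your plan --- realizing the span $M$ of normal forms as a $\Sym\otimes_\zeta\Mag$-module on which the generators of $I_{\Leib^{op}}$ act by zero, so that $\Leib^{op}$ surjects onto $M$ compatibly with the normal forms --- is a sound and in fact more complete route to the basis claim, and it would supply an independence argument the paper leaves implicit. However, as written it is only a plan: the well-definedness of the action, i.e.\ the confluence check on the critical pairs you list (the triple overlap $\partial^{k+2}_j\partial^{k+1}_j\partial^k_j$, overlaps with $\Mag$-straightening as the new subscript bubbles upward, and the interactions with $\zeta$), is precisely the substance of that half and is not carried out. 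So your proposal reproduces the paper's spanning step, adds a legitimate but unexecuted strategy for independence, and in its current state proves no more than the paper's own argument does.
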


\begin{proof}
  By applying the new relations we get from $I_{\Leib^{op}}$, we need to straighten the basis monomials with monomials conforming to the condition stated above. Notice that we do not need to straighten the $\Sym$ part, but the $\Mag$ part.   We will write the proof by induction on the length of the monomials coming from $\Mag$. For $m=n-1$ (the trivial monomial $1_n$) and $m=n$ the statement is trivial. So, the base case is when $m=n+1$. If we have a monomial of the form $\partial^{n+1}_{j_{n+1}}\partial^n_{j_n}$ with $j_{n+1}\geq j_n$. The only case where this monomial has to be replaced is when $j_{n+1}=j_n$. In that case we replace $\partial^{n+1}_{j_n}\partial^n_{j_n}$ with $(1-\brd{n+2}{j_n+1})\partial^{n+1}_{j_n+1}\partial^n_{j_n}$ and since $j_n+1>j_n$, the new monomial conforms to the statement. Assume any monomial of length $\ell$ can be straightened to conform to the statement. Assume $\partial^m_{j_m}\cdots \partial^n_{j_n}$ with $m-n=\ell$ is a monomial in $\Leib$ of length $\ell+1$. Notice that the relations in $\Leib^{op}$ indicate that if a length 2 part $\partial^{u+1}_{j_{u+1}}\partial^u_{j_u}$ of a monomial $\partial^m_{j_m}\cdots \partial^n_{j_n}$ is replaced
  \[ \underbrace{\partial^m_{j_m}\cdots \partial^{u+1}_{j_{u+1}}}_\text{affected region}\underbrace{\partial^u_{j_u}\cdots \partial^n_{j_n}}_\text{unaffected region} \]
  the part of the monomial to the right of $\partial^u_{j_u}$ stays unaffected. Thus if the monomial already satisfies $j_{n+1}>j_n$, we can straighten $\partial^m_{j_m}\cdots\partial^{n+1}_{j_{n+1}}$ and then attach $\partial^n_{j_n}$ after the fact. If, on the other hand, $j_{n+1}=j_n$ then $\partial^m_{j_m}\cdots \partial^n_{j_n}$ is replaced with
  \[ \partial^m_{j_m}\cdots \partial^{n+2}_{j_{n+2}}(1-\brd{n+2}{j_n+1})\partial^{n+1}_{j_n+1}\partial^n_{j_n} \]
  and we can move the elements of $\Sym$ all the way to the left, straighten the part to the left of $\partial^n_{j_n}$ in $\Mag$ and then apply the induction hypothesis. The result follows.
\end{proof}
	
\begin{defn}
  Let  $\rho_{-1}^n:=0$ in $\Leib $ and then recursively define
  $$\rho_{j+1}^n:=\partial_{j+1}^n+\chi_{j+1}^{n+1} \rho_j^n$$
  for any $0 \leq j \leq n-1$. One can also define $\rho_j^n$ non-recursively as
  $$\rho_j^n=\partial_j^n+\sum_{a=1}^j \chi_j^{n+1} \cdots \chi_a^{n+1} \partial_{a-1}^n$$
  for any $0 \leq j \leq n$.
\end{defn}
	
\begin{lemma}\label{lem:LeibDistributive}
  We have the following relationship between $\chi_j^n$ and $\rho_i^n$ in $\Leib $:
  \begin{equation}\label{eq:NewDistributive}
    \rho_i^n \chi_j^n
    = \begin{cases}
      \chi_{j+1}^{n+1} \rho_i^n
      & \text { if } j>i \\
      \chi_{i+1}^{n+1} \chi_i^{n+1} \rho_{i+1}^n-\chi_{i+1}^{n+1} \chi_i^{n+1} \chi_{i+1}^{n+1} \rho_i^n+\chi_i^{n+1} \chi_{i+1}^{n+1} \rho_{i-1}^n
      & \text { if } j=i \\
      \chi_j^{n+1} \rho_i^n
      & \text { if } j<i
    \end{cases}
  \end{equation}
  for any $n \geq 1$ and for any $0 \leq i\leq n$ and $0\leq j \leq n-1$.
\end{lemma}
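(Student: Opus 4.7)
The plan is to prove all three identities simultaneously by induction on $i$, using the recursive definition $\rho_{j+1}^n = \partial_{j+1}^n + \chi_{j+1}^{n+1} \rho_j^n$ together with $\rho_{-1}^n = 0$. All three cases must be proved in parallel because the inductive step of any one case will invoke the inductive hypothesis from one of the others.

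For the base case $i=0$, I have $\rho_0^n = \partial_0^n$, and the distributive law $\zeta$ of Lemma~\ref{lem:CanonicalDistributiveLaw} directly gives $\partial_0^n \chi_j^n = \chi_{j+1}^{n+1} \partial_0^n$ when $j>0$. For $j=0$, the middle branch of $\zeta$ yields $\partial_0^n \chi_0^n = \chi_1^{n+1}\chi_0^{n+1}\partial_1^n$, which matches the asserted right-hand side after expanding $\rho_1^n = \partial_1^n + \chi_1^{n+1}\partial_0^n$ and using $\rho_{-1}^n = 0$.

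For the inductive step I will write $\rho_{i+1}^n \chi_j^n = \partial_{i+1}^n \chi_j^n + \chi_{i+1}^{n+1}\,\rho_i^n \chi_j^n$, apply the appropriate branch of $\zeta$ to the first summand, and apply the inductive hypothesis to the second. When $j>i+1$ or $j<i$ the analysis is purely commutative: in each summand the leading braid factor is the same ($\chi_{j+1}^{n+1}$ or $\chi_j^{n+1}$ respectively) and commutes past $\chi_{i+1}^{n+1}$ because the index gap is at least $2$, so the two summands recombine into $\chi_{j+1}^{n+1}\rho_{i+1}^n$ or $\chi_j^{n+1}\rho_{i+1}^n$. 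The case $j=i+1$ uses the ``$i=j$'' branch of $\zeta$, which introduces $\chi_{i+2}^{n+1}\chi_{i+1}^{n+1}\partial_{i+2}^n$, and the desired identity then reduces, after inserting the inductive hypothesis for $\rho_i^n\chi_{i+1}^n$, to the tautological recursion $\rho_{i+2}^n = \partial_{i+2}^n + \chi_{i+2}^{n+1}\rho_{i+1}^n$.

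The main obstacle is the subcase $j=i$ of the inductive step. Here the ``$i=j+1$'' branch of $\zeta$ gives $\partial_{i+1}^n\chi_i^n = \chi_i^{n+1}\chi_{i+1}^{n+1}\partial_i^n$, and the inductive hypothesis supplies the three-term expansion of $\rho_i^n\chi_i^n$. I multiply that expansion by $\chi_{i+1}^{n+1}$ on the left and simplify using $\chi_{i+1}^{n+1}\chi_{i+1}^{n+1} = 1_{n+1}$ and the braid identity $\chi_{i+1}^{n+1}\chi_i^{n+1}\chi_{i+1}^{n+1} = \chi_i^{n+1}\chi_{i+1}^{n+1}\chi_i^{n+1}$; then I substitute $\rho_i^n = \partial_i^n + \chi_i^{n+1}\rho_{i-1}^n$ into the middle summand. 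At that point the $\partial_i^n$ contribution produced by the substitution cancels the $\partial_i^n$ coming from the first summand, while the $\chi_i^{n+1}\rho_{i-1}^n$ contribution cancels the third summand, leaving exactly $\chi_i^{n+1}\rho_{i+1}^n$, as required. Notably, the Leibniz relations $I_{\Leib}$ themselves play no role in the computation: the identities already hold in $\Sym\otimes_\zeta \Mag$, so they descend automatically to the quotient $\Leib$.
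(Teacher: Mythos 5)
Your proposal is correct; I checked each branch of your induction (the base case $i=0$, the commuting subcases $j>i+1$ and $j<i$, the $j=i+1$ subcase reducing to the recursion $\rho_{i+2}^n=\partial_{i+2}^n+\chi_{i+2}^{n+1}\rho_{i+1}^n$, and the $j=i$ subcase with its two cancellations) and the index estimates needed for the braid-commutation steps all hold. Your route is genuinely different from the paper's: the paper first uses the degree-shift endomorphism $(\cdot)[+1]$ to reduce each of its four cases to a computation with top lower index (e.g.\ $\rho_i^n\chi_{n-1}^n$, $\rho_{n-1}^n\chi_{n-1}^n$, $\rho_n^n\chi_{n-1}^n$, $\rho_n^n\chi_j^n$), and then expands $\rho$ via its non-recursive formula $\rho_j^n=\partial_j^n+\sum_{a=1}^j\chi_j^{n+1}\cdots\chi_a^{n+1}\partial_{a-1}^n$, pushing the single stray $\chi$ through the sums with the braid and involution relations; you instead induct on $i$ using only the recursive definition $\rho_{i+1}^n=\partial_{i+1}^n+\chi_{i+1}^{n+1}\rho_i^n$, never invoking the closed formula or the shift operator. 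Your argument is more self-contained and makes the three-term middle branch appear naturally as the obstruction term in the recursion, at the cost of having to carry all three branches through the induction simultaneously; the paper's approach is heavier on sum manipulation but showcases the shift endomorphism, which it reuses immediately afterwards in the proof of Proposition~\ref{prop:rhosimp}. Your closing observation is also accurate and worth keeping: neither your computation nor the paper's uses the generators of $I_{\Leib}$, so the identity~\eqref{eq:NewDistributive} already holds in $\Sym\otimes_\zeta\Mag$ and descends to $\Leib$; the Leibniz relations only enter later, in Proposition~\ref{prop:rhosimp}.
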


\begin{proof}
  To prove this lemma, we will use a $\B{K}$-algebra endomorphism $(\cdot)[+1]: \Leib \to \Leib $ and its variants. This endomorphism shifts up the right and left degrees on generators by $1$:
  \begin{equation}
    1_n[+1]=1_{n+1} \qquad \chi_i^n[+1]=\chi_i^{n+1} \qquad \partial_j^n[+1]=\partial_j^{n+1}
  \end{equation}
  for $0 \leq i \leq n-1$ and $0 \leq j \leq n$.  Observe that this endomorphism can be generalized to greater shifts in degree. In particular, for all $0\leq j \leq n$ we have 
  $\rho_j^n=\rho_j^j[n-j]$.  Note also that this is a $\B{K}$-algebra morphism because the multiplication structure is determined solely by the lower indices. 
  
  When we visualize each elementary tensor in terms of its diagrammatic representation, we see that the endomorphism simply adds strands to the right of the diagram which does not interfere with the multiplication. On the opposite side, we can remove idle strands in the diagram when we multiply elements and then add them later. So, it is a naturally arising endomorphism. 
  
  We prove the lemma by using case-by-case analysis:
  \begin{enumerate}[{Case } (1)]
  \item $j>i$. 
    Under this assumption, we can reduce the case of calculation of $\rho_i^{n}\chi_j^n$ to $\rho_i^{j+1}\chi_j^{j+1}$ because we can adjust the upper indices as follows:
    \begin{equation*}
      \rho_i^{n}\chi_j^n
      =\left(\rho_i^{j+1}\chi_j^{j+1}\right)[n-j-1]
    \end{equation*}
    So, it is enough to consider products of the form $\rho_i^{n}\chi_{n-1}^{n}$ for $0 \leq i < n$.
    \begin{align*}
      \rho_i^n\chi_{n-1}^{n}
      = & \partial_i^n\chi_{n-1}^{n} + \sum_{a=1}^i \chi_i^{n+1} \cdots \chi_a^{n+1} \partial_{a-1}^n\chi_{n-1}^{n} \\
      = & \chi^{n+1}_{n}\partial_{i}^n + \sum_{a=1}^i \chi_i^{n+1} \cdots \chi_a^{n+1} \chi^{n+1}_{n} \partial_{a-1}^n \\
      = & \chi^{n+1}_{n} \rho_i^n
    \end{align*}
    Having this equality, we can go back and complete the remaining cases as follows:
    $$\rho_i^{n}\chi_j^n=(\rho_i^{j+1}\chi_j^{j+1})[n-j-1]=(\chi_{j+1}^{j+2}\rho_i^{j+1})[n-j-1]=\chi^{n+1}_{j+1} \rho_i^n$$
    
  \item $i=j$.
    As it is done above, we can reduce the case to $\rho_{n-1}^{n}\chi_{n-1}^n$. 
    \begin{align*}
      \rho_{n-1}^{n}\chi_{n-1}^n
      = & \partial_{n-1}^{n}\chi_{n-1}^n + \chi_{n-1}^{n+1}\partial_{n-2}^{n}\chi_{n-1}^n
          + \sum_{a=1}^{n-2}\chi_{n-1}^{n+1}\chi_{n-2}^{n+1} \cdots \chi_a^{n+1} \partial_{a-1}^n\chi_{n-1}^n \\
      = & \chi^{n+1}_{n}\chi^{n+1}_{n-1}\partial_n^n + \chi_{n-1}^{n+1}\chi^{n+1}_{n}\partial_{n-2}^n
          + \sum_{a=1}^{n-2} \chi_{n-1}^{n+1}\chi_{n-2}^{n+1} \cdots \chi_a^{n+1} \chi^{n+1}_{n} \partial_{a-1}^n \\
      = & \chi^{n+1}_{n}\chi^{n+1}_{n-1} \rho_n^n - \chi^{n+1}_{n}\chi^{n+1}_{n-1}\chi_n^{n+1}\rho_{n-1}^n + \chi_{n-1}^{n+1}\chi^{n+1}_{n}\rho_{n-2}^n
    \end{align*}
    
  \item $i=j+1$.
    We can reduce this case to $\rho_{n}^{n}\chi_{n-1}^n$.
    \begin{align*}
      \rho_{n}^{n}\chi_{n-1}^n
      = & \partial_{n}^{n}\chi_{n-1}^n + \chi_{n}^{n+1}\partial_{n-1}^{n}\chi_{n-1}^n
          + \sum_{a=1}^{n-1}\chi_{n}^{n+1}\chi_{n-1}^{n+1}\chi_{n-2}^{n+1} \cdots \chi_a^{n+1} \partial_{a-1}^n\chi_{n-1}^n\\
      = & \chi^{n+1}_{n-1}\chi^{n+1}_{n}\partial_{n-1}^n + \chi_{n}^{n+1}\chi_{n}^{n+1}\chi_{n-1}^{n+1}\partial_{n}^n
          + \sum_{a=1}^{n-1} \chi_{n}^{n+1}\chi_{n-1}^{n+1}\chi_{n-2}^{n+1} \cdots \chi_a^{n+1} \chi^{n+1}_{n} \partial_{a-1}^n \\
      = & \chi^{n+1}_{n-1}\chi^{n+1}_{n}\partial_{n-1}^n + \chi_{n-1}^{n+1}\partial_{n}^n+ \sum_{a=1}^{n-1} \chi_{n-1}^{n+1}\chi_{n}^{n+1} \chi^{n+2}_{n-1}\chi_{n-2}^{n+1} \cdots \chi_a^{n+1} \partial_{a-1}^n \\
      = & \chi_{n-1}^{n+1}\rho_{n}^{n}
    \end{align*}
    
  \item $i>j+1$.
    This time we can reduce the case to $\rho_n^n\chi_j^n$ for $0\leq j < n$.
    \begin{align*}
      \rho_{n}^{n}\chi_{j}^n 
	= & \chi^{n+1}_{j}\partial_{n}^n + \sum_{a=1}^{j-1} \chi_{n}^{n+1} \cdots \chi_{j+1}^{n+1}\chi_{j}^{n+1}\chi_{j+1}^{n+1} \chi_{j-1}^{n+1}\cdots \chi_a^{n+1} \partial_{a-1}^n \\
          & + \chi_{n}^{n+1} \cdots \chi_{j+1}^{n+1}\chi_{j}^{n+1}\chi_{j+1}^{n+1} \partial_{j-1}^n 
	    +  \chi_{n}^{n+1} \cdots \chi_{j+2}^{n+1} \chi_{j+1}^{n+1}\chi_{j+1}^{n+1} \chi_{j}^{n+1}\partial_{j+1}^n \\
	  & + \chi_{n}^{n+1} \cdots \chi_{j+2}^{n+1}\chi_{j}^{n+1}\chi_{j+1}^{n+1} \partial_{j}^n 
	    + \sum_{a=j+3}^{n}\chi_{n}^{n+1} \cdots \chi_a^{n+1} \chi_{j}^{n+1} \partial_{a-1}^n\\
       = &  \chi^{n+1}_{j} \left( \partial_n^n 
            + \sum_{a=1}^{n}\chi_{n}^{n+1} \cdots \chi_a^{n+1} \partial_{a-1}^n \right) \\
       = & \chi^{n+1}_{j} \rho_n^n
    \end{align*} 
  \end{enumerate}
  Hence we are done.
\end{proof}

\begin{proposition}\label{prop:rhosimp}
  We have the following relations satisfied in $\Leib $:
  $$\rho_i^{n+1} \rho_j^n=\rho_{j+1}^{n+1} \rho_i^n$$
  for any $n \geq 0$ and $0 \leq i \leq j \leq n$.
\end{proposition}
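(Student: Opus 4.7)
My plan is to prove the identity by a two-step induction: first establish the intermediate identity
\[
\partial_i^{n+1}\rho_j^n \;=\; \rho_{j+1}^{n+1}\partial_i^n \qquad \text{for all } 0\le i\le j
\]
by induction on $j$, and then derive the main identity by a separate induction on $i$.

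For the intermediate identity, the base $j=i=0$ is just a rearrangement of the Leibniz relation $\partial_1^{n+1}\partial_0^n = (1_{n+2}-\chi_1^{n+2})\partial_0^{n+1}\partial_0^n$. For the inductive step, I expand $\rho_j^n = \partial_j^n + \chi_j^{n+1}\rho_{j-1}^n$ and split on whether $i<j$ or $i=j$. When $i<j$, the magmatic relation and the $i<j$ case of $\zeta$ let me invoke the induction hypothesis on $\partial_i^{n+1}\rho_{j-1}^n$ and match both sides directly. When $i=j$, the diagonal case of $\zeta$ produces $\chi_{j+1}^{n+2}\chi_j^{n+2}\partial_{j+1}^{n+1}\rho_{j-1}^n$; here I would first compute the auxiliary identity $\partial_{j+1}^{n+1}\rho_{j-1}^n = \rho_{j-1}^{n+1}\partial_j^n$ (using that $\partial_{j+1}^{n+1}$ commutes with every $\chi$ appearing in $\rho_{j-1}^n$ by the $i>j_\chi+1$ case of $\zeta$, together with $\partial_{j+1}^{n+1}\partial_k^n = \partial_k^{n+1}\partial_j^n$ for $k<j$ from the magmatic relation), and then use the Leibniz relation to collapse the two leading terms $\partial_{j+1}^{n+1}\partial_j^n + \chi_{j+1}^{n+2}\partial_j^{n+1}\partial_j^n$ of $\rho_{j+1}^{n+1}\partial_j^n$ into $\partial_j^{n+1}\partial_j^n$, matching the LHS.

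To derive the main identity from the intermediate one, I induct on $i$. The base case $i=0$ is immediate since $\rho_0^m = \partial_0^m$. For the inductive step, writing $\rho_i^{n+1} = \partial_i^{n+1} + \chi_i^{n+2}\rho_{i-1}^{n+1}$ and $\rho_i^n = \partial_i^n + \chi_i^{n+1}\rho_{i-1}^n$, I apply the induction hypothesis to the term $\rho_{i-1}^{n+1}\rho_j^n$, the intermediate identity to $\partial_i^{n+1}\rho_j^n$, and the commutation $\rho_{j+1}^{n+1}\chi_i^{n+1} = \chi_i^{n+2}\rho_{j+1}^{n+1}$ from Lemma~\ref{lem:LeibDistributive} (valid since $i\le j<j+1$). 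The two sides then coincide term by term.

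The main obstacle is the $i=j$ case of the intermediate identity: it is the only step where the Leibniz relation itself enters the induction, and it depends on the auxiliary calculation of $\partial_{j+1}^{n+1}\rho_{j-1}^n$ as a preparatory lemma; the remaining cases are routine applications of $\zeta$ and the induction hypothesis.
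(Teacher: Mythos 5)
Your proposal is correct, but it takes a genuinely different route from the paper's proof. The paper first uses the shift endomorphism $(\cdot)[+1]$ to reduce the statement to the extreme case $\rho_i^{n+1}\rho_n^n=\rho_{n+1}^{n+1}\rho_i^n$, then runs an induction on $n$ with a three-case analysis ($i<n-1$, $i=n-1$, $i=n$), expanding the \emph{right-hand} factor $\rho_n^n$ recursively; it leans on several cases of Lemma~\ref{lem:LeibDistributive} and quotes the mixed identity $\rho_i^{n+1}\partial_n^n=\partial_{n+1}^{n+1}\rho_i^n$ as a routine fact, with the Leibniz relation entering in the case $i=n$. You instead expand the \emph{left-hand} factor $\rho_i^{n+1}$ and induct on $i$, having isolated all the real content into the mirror-image mixed identity $\partial_i^{n+1}\rho_j^n=\rho_{j+1}^{n+1}\partial_i^n$ for $i\le j$, proved by induction on $j$; I checked the key steps (the base case and the $i<j$ step via the magmatic relation and the $i<j$ case of $\zeta$; the $i=j$ step via the diagonal case of $\zeta$, the auxiliary commutation $\partial_{j+1}^{n+1}\rho_{j-1}^n=\rho_{j-1}^{n+1}\partial_j^n$, and the collapse $\partial_{j+1}^{n+1}\partial_j^n+\chi_{j+1}^{n+2}\partial_j^{n+1}\partial_j^n=\partial_j^{n+1}\partial_j^n$ coming from~\eqref{eq:LeibRelations}; and the final assembly using only the $j<i$ case of Lemma~\ref{lem:LeibDistributive}), and they all go through. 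What your approach buys: it avoids the shift-operator bookkeeping and the three-way case split, uses Lemma~\ref{lem:LeibDistributive} only once and in its easiest case, confines the Leibniz relation to exactly one spot of the intermediate induction, and makes explicit the $\partial$--$\rho$ commutation that the paper leaves as ``straightforward.'' What the paper's approach buys is the reduction-by-shift technique itself, which it reuses elsewhere (e.g.\ in the proof of Lemma~\ref{lem:LeibDistributive}), so its proof is stylistically uniform with the surrounding arguments. If you write yours up, state the intermediate identity as a standalone lemma with its range of validity $0\le i\le j\le n$ made explicit, since both inductions quote it at shifted indices.
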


\begin{proof}
  Since we have
  \begin{equation}\label{eq:reduce}
    \rho_i^{n+1}\rho_j^n
    = (\rho_i^{j+1}\rho_j^j)[n-j]
    = \rho_i^{j+1}[n-j]\cdot \rho_j^j[n-j]
  \end{equation}
  the statement reduces to proving $\rho^{n+1}_i\rho^n_n = \rho^{n+1}_{n+1}\rho^n_i$ for every $i\leq n$.  We will use induction on $n$ to prove the statement. For the base case $n=0$ we have
  \begin{align*}
    \rho_1^1\rho_0^0-\rho_0^1\rho_0^0= & \partial_1^1\partial_0^0+\chi_1^2\partial_0^1\partial_0^0-\partial_0^1\partial_0^0= \partial_{1}^{1} \partial_0^0-\left(1_{2} - \chi_{1}^{2}\right) \partial_0^{1} \partial_0^0
  \end{align*}
  Right-hand side of the equation lies in $I_{\Leib} $, meaning that  $\rho_0^1\rho_0^0=\rho_1^1\rho_0^0$ in $\Leib$.
  Now, assume as our induction hypothesis that we have $\rho_i^{n} \rho_{n-1}^{n-1}=\rho_{n}^{n} \rho_i^{n-1}$ for any $0 \leq i \leq n-1$. We need to show that $\rho_i^{n+1} \rho_{n}^{n}=\rho_{n+1}^{n+1} \rho_i^{n}$ holds for any $0 \leq i\leq n$. We will divide this into $3$ cases.
  \begin{enumerate}[{Case} (1)]
  \item $0 \leq i < n-1$
    \begin{align*}
      \rho_i^{n+1} \rho_n^n  
      & = \rho_i^{n+1} \partial_n^n+\rho_i^{n+1} \chi_n^{n+1}\left(\rho_{n-1}^{n-1}\right)[+1]\\
      & =\partial_{n+1}^{n+1} \rho_i^n+\chi_{n+1}^{n+2}\left(\rho_n^n \rho_i^{n-1}\right)[+1] \\
      & = \partial_{n+1}^{n+1} \rho_i^n+\chi_{n+1}^{n+2} \rho_n^{n+1} \rho_i^n\\
      & = \rho_{n+1}^{n+1} \rho_i^n
    \end{align*}
    where we use two main identities $\rho_i^{n+1} \chi_n^{n+1}=\chi_{n+1}^{n+2} \rho_i^{n+1}$ from Lemma~\ref{lem:LeibDistributive} and $\rho_i^n \rho_{n-1}^{n-1}=\rho_n^n \rho_i^{n-1}$ from the induction hypothesis. We also use $\rho_i^{n+1} \partial_n^n=\partial_{n+1}^{n+1} \rho_i^n$ which is straightforward to show, using the non-recursive definition of $\rho_i^{n+1}$ and relations in $\Mag$. 
			
  \item $i=n-1$
    \begin{align*}
      \rho_{n-1}^{n+1} \rho_n^n 
	& =\rho_{n-1}^{n+1}\partial_n^n+\rho_{n-1}^{n+1} \chi_n^{n+1} \rho_{n-1}^n\\
        & =\partial_{n+1}^{n+1} \rho_{n-1}^n+\chi_{n+1}^{n+1} \rho_{n-1}^{n+1} \rho_{n-1}^n \\
        & =\partial_{n+1}^{n+1} \rho_{n-1}^n+\chi_{n+1}^{n+1} \rho_n^{n+1} \rho_{n-1}^n\\
	& =\rho_{n+1}^{n+1} \rho_{n-1}^n
    \end{align*}
    We use similar identities as in the previous case, namely $\rho_{n-1}^{n+1}\partial_n^n=\partial_{n+1}^{n+1} \rho_{n-1}^n$ and $\rho_{n-1}^{n+1} \chi_n^{n+1} = \chi_{n+1}^{n+1} \rho_n^{n+1}$.
  \item $i=n$ \\
    First, we note that $\partial_n^{n+1}\partial_n^n=\partial_{n+1}^{n+1}\partial_n^n+\chi_{n+1}^{n+2}\partial_{n}^{n+1}\partial_n^n$ in $\Leib$ by the relations in the ideal $I_{\Leib}$. Moreover, we use the identities $\partial_n^{n+1} \chi_n^{n+1}=\chi_{n+1}^{n+2} \chi_n^{n+2} \partial_{n+1}^{n+1}$ and $\partial_{n+1}^{n+1} \rho_{n-1}^n=\rho_{n-1}^{n+1}\partial_{n}^{n}$ and $\chi_n^{n+2} \rho_{n+1}^{n+1}=\rho_{n+1}^{n+1} \chi_n^{n+1}$. So we have:
    
    \begin{align*}
      \rho_n^{n+1} \rho_n^n
	& = \partial_n^{n+1} \partial_n^n+\partial_n^{n+1} \chi_n^{n+1} \rho_{n-1}^n+\chi_n^{n+2} \rho_{n-1}^{n+1} \rho_n^n  \\
        & = \partial_{n+1}^{n+1} \partial_n^n+\chi_{n+1}^{n+2} \partial_n^{n+1} \partial_n^n + \chi_{n+1}^{n+2} \chi_n^{n+2} \partial_{n+1}^{n+1} \rho_{n-1}^n+\chi_n^{n+2} \rho_{n+1}^{n+1} \rho_{n-1}^n \\
        & =\partial_{n+1}^{n+1} \partial_n^n+\chi_{n+1}^{n+2} \partial_n^{n+1} \partial_n^n  +\chi_{n+1}^{n+2} \chi_n^{n+2} \rho_{n-1}^{n+1}\partial_{n}^{n}+\chi_n^{n+2} \rho_{n+1}^{n+1} \rho_{n-1}^n \\
	& =\rho_{n+1}^{n+1} \partial_n^n+\chi_n^{n+2} \rho_{n+1}^{n+1} \rho_{n-1}^n\\
        & =\rho_{n+1}^{n+1} \partial_n^n+\rho_{n+1}^{n+1} \chi_n^{n+1} \rho_{n-1}^n\\
        & =\rho_{n+1}^{n+1} \rho_n^n
    \end{align*}
  \end{enumerate}
  Hence we are done.
\end{proof}

\subsection{$\Leib$ is a crossed presimplicial algebra}
	
\begin{theorem}\label{thm:MainResult}
  Equation~\eqref{eq:NewDistributive} describes a distributive law of the form
  \[ \omega\colon \Simp\otimes_{\B{K}}\Sym\to {\Sym\otimes_{\B{K}}\Simp} \]
  Consequently, $\Leib$ is isomorphic to $\Sym \otimes_{\omega}\Simp$ as $\B{K}$-algebras.
\end{theorem}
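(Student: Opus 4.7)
The plan is to apply Proposition \ref{prop:unique}: I would exhibit $\Sym$ and an isomorphic copy of $\Simp$ as $\B{K}$-subalgebras of $\Leib$ whose product decomposes $\Leib$ as a $\B{K}$-bimodule, and then identify the unique resulting distributive law with \eqref{eq:NewDistributive} via Lemma \ref{lem:LeibDistributive}.

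First, the composition $\Sym\hookrightarrow \Sym\otimes_\zeta\Mag \twoheadrightarrow \Leib$ is injective, since the basis of $\Leib$ inherited from Proposition \ref{prop:leibbasis} (transported via the isomorphism $\alpha$ of Proposition \ref{prop:automorphism}) contains all monomials $\tau\cdot 1_n$ with $\tau\in S_n$ as linearly independent elements. Second, by Proposition \ref{prop:rhosimp} the elements $\rho^n_j$ in $\Leib$ satisfy the defining presimplicial relations of $\Simp$, so $\partial^n_j\mapsto \rho^n_j$ extends to a $\B{K}$-algebra morphism $\imath\colon \Simp\to\Leib$. Third, Lemma \ref{lem:LeibDistributive} shows that in $\Leib$ the generators $\chi^n_i$ and the images $\rho^n_j=\imath(\partial^n_j)$ satisfy exactly the commutation relations prescribed by \eqref{eq:NewDistributive}.

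Combining these three facts, multiplication in $\Leib$ induces a $\B{K}$-bimodule map $\mu\colon \Sym\otimes_{\B{K}}\Simp \to \Leib$, where the $\Simp$-factor is embedded via $\imath$. To see that $\mu$ is bijective, I would rewrite the basis of $\Leib$ from Proposition \ref{prop:leibbasis} in terms of the $\rho$'s: using the non-recursive expression $\rho^n_j=\partial^n_j+\sum_{a=1}^j \chi_j^{n+1}\cdots\chi_a^{n+1}\partial_{a-1}^n$, the change of basis from the family $\{\tau\cdot\partial^m_{j_m}\cdots\partial^n_{j_n}\}$ to $\{\tau\cdot\rho^m_{j_m}\cdots\rho^n_{j_n}\}$, indexed over the same combinatorial data ($j_m>\cdots>j_n$ and $\tau$ in the appropriate symmetric group), is unitriangular with respect to a suitable filtration and hence invertible. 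This furnishes the required $\B{K}$-bimodule isomorphism, and Proposition \ref{prop:unique} then supplies a unique distributive law $\omega$ with $\Leib\cong\Sym\otimes_\omega\Simp$ as $\B{K}$-algebras. That $\omega$ agrees with \eqref{eq:NewDistributive} on generators is exactly Lemma \ref{lem:LeibDistributive}, and the extension to all of $\Simp\otimes_{\B{K}}\Sym$ is forced by Proposition \ref{420} together with Lemma \ref{lem:distributive}.

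The main obstacle will be the triangularity claim above. One must verify that upon expanding each factor $\rho^m_{j_m},\ldots,\rho^n_{j_n}$ and then using the distributive law $\zeta$ to push every newly produced $\chi$ leftwards past the remaining $\partial$'s, the resulting expansion in the $\partial$-basis of $\Leib$ has the original $\tau\cdot\partial^m_{j_m}\cdots\partial^n_{j_n}$ as its leading term — for instance, with respect to word length in $\partial$'s followed by reverse lexicographic order on the subscripts. Each correction term coming from $\rho^n_j-\partial^n_j$ yields a $\partial$-monomial of the same total length but with a strictly smaller trailing index, guaranteeing the unitriangular shape. The careful bookkeeping of these strict inequalities simultaneously across every factor of the product, while ensuring via Lemma \ref{lem:LeibDistributive} that the accumulated $\chi$-prefactor remains inside $\Sym$, is the technically delicate part.
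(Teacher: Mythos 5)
Your skeleton agrees with the paper in its outer layers: the $\rho^n_j$ satisfy the presimplicial relations (Proposition~\ref{prop:rhosimp}), Lemma~\ref{lem:LeibDistributive} gives the commutation rules~\eqref{eq:NewDistributive}, and once one knows $\Leib\cong\Sym\otimes_{\B{K}}\Simp$ as $\B{K}$-bimodules, Proposition~\ref{prop:unique} produces the unique $\omega$ and pins it down as~\eqref{eq:NewDistributive}. The gap is in your bijectivity step. The family $\{\tau\,\partial^m_{j_m}\cdots\partial^n_{j_n}\mid j_m>\cdots>j_n\}$ that you take as the source of a unitriangular change of basis is a basis of $\Leib^{op}$ (Proposition~\ref{prop:leibbasis}), not of $\Leib$, and transporting by $\alpha$ does not preserve its shape: for instance $\alpha(\partial^1_1\partial^0_0)=\brd{2}{1}\brd{2}{0}\brd{2}{1}\,\partial^1_0\partial^0_0$, whose $\partial$-part has repeated subscripts. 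Worse, in $\Leib$ your family is genuinely linearly dependent: the defining relation for $n=0$, $j=0$ gives $\partial^1_1\partial^0_0=(1_2-\brd{2}{1})\partial^1_0\partial^0_0$, hence $(1_2+\brd{2}{1})\,\partial^1_1\partial^0_0=0$, a nontrivial relation between two members of the family (with $\tau=1_2$ and $\tau=\brd{2}{1}$). So there is no invertible transition, unitriangular or otherwise, inside $\Leib$ from that family to the $\rho$-monomials, and ``unitriangular hence invertible'' cannot establish that the $\rho$-monomials form a basis. Running the triangularity upstairs in $\Sym\otimes_\zeta\Mag$ does not help either: linear independence there does not descend to the quotient by $I_{\Leib}$.

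What you need in place of the triangularity claim is the paper's two-step argument. First, spanning: Lemma~\ref{lem:LeibDistributive} together with Proposition~\ref{prop:rhosimp} lets you rewrite any element of $\Leib$ as a combination of monomials $\tau\,\rho^m_{j_m}\cdots\rho^n_{j_n}$ with $j_m>\cdots>j_n$. Second, a dimension count instead of a change of basis: since $\Leib\cong\Leib^{op}$ via $\alpha$ (Proposition~\ref{prop:automorphism}) and $\Leib^{op}$ has the same bigraded $\Bbbk$-basis as $\Sym\otimes_{\B{K}}\Simp$ (Proposition~\ref{prop:leibbasis}), each bigraded piece of $\Leib$ is finite dimensional with the same dimension as the corresponding piece of $\Sym\otimes_{\B{K}}\Simp$; a spanning family indexed by the same finite set in each bidegree is therefore automatically a basis. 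With that $\B{K}$-bimodule isomorphism established, your concluding appeal to Proposition~\ref{prop:unique} and the identification of $\omega$ with~\eqref{eq:NewDistributive} via Lemma~\ref{lem:LeibDistributive} goes through exactly as in the paper.
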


\begin{proof}
  Lemma~\ref{lem:LeibDistributive} together with Proposition~\ref{prop:rhosimp} ensures that any element of  $\Leib$ can be written (not necessarily uniquely) as a $\Bbbk$-linear combination of monomials of the following form:
  \[ \tau^{m+1} \rho^m_{j_m}\cdots \rho^n_{j_n} \]
  where $\tau^{m+1} \in S_{m+2}$ and $j_m>\cdots>j_n$. We also know that $\Leib$ is isomorphic to $\Leib^{op}$, which in turn is isomorphic to $\Sym\otimes_{\B{K}}\Simp$ as $\Bbbk$-vector spaces. Therefore the monomials above must form a $\Bbbk$-vector space basis for $\Leib$. Note that this bijection of the bases works at a bigraded level, so these bijections are in fact between bigraded finite sets. Moreover, this implies that $\Leib$ is also isomorphic to $\Sym\otimes_{\B{K}}\Simp$ as a $\B{K}$-bimodule. Now, by Proposition ~\ref{prop:unique} we have a unique distributive law  $\omega\colon  \Simp  \otimes_{\B{K}} \Sym \to \Sym \otimes_{\B{K}} \Simp$ such that $\Leib$ is isomorphic to $ \Sym \otimes_{\omega} \Simp$ as a $\B{K}$-algebra. The inherent algebra structure in $\Leib$ forces $\omega$ to be the one explicitly described distributive law in Equation~\eqref{eq:NewDistributive}. 
\end{proof}

\begin{remark}
  We again stress the fact that the isomorphism of $\B{K}$-algebras $\Leib$ and $\Sym \otimes_{\omega}\Simp$ we obtained in Theorem~\ref{thm:MainResult} does not lift to the PROP or operad level over the maps mentioned in Section~\ref{sec:Final}. This is because the monoidality of the morphisms is not preserved as can be seen from the diagrams of the elements $\rho_i^n$.
\end{remark}

\subsection{Connection Between the $\B{K}$-algebras and $\Bbbk$-linear PRO(P)s Associated to Operads}\label{sec:Final}

In this Section, we bridge our formalism and the literature on operads.

Recall that the combinatorial description of morphisms in PROPs can be given by stacking rooted trees that represent the elements of the operad side by side, followed by a permutation of the input leaves~\cite[Section 5.4.1]{loday2012algebraic}. In a PRO, there is no permutation of the input leaves.

Let $\C{C}$ be a PRO.  We can forget the monoidal structure on $\C{C}$, and the categorical algebra of the underlying $\Bbbk$-linear category is a $\B{K}$-algebra, and thus we obtain a forgetful functor of the form $\PRO_{\Bbbk}\to \Alg(\B{K})$. 
For example, consider the $\Bbbk$-linear operad $\C{P}$ where 
\begin{equation}\label{identityoperad}
\C{P}(n) = \begin{cases}
	\Bbbk=\langle id \rangle & \text{ if } n=1\\
	0 & \text{ if } n\neq 1
\end{cases}
\end{equation}
If we consider the PRO $\text{cat}\C{P}$ associated to the nonsymmetric operad $\C{P}$, then the categorical algebra of  $\text{cat}\C{P}$ is the $\Bbbk$-algebra $\B{K}$. We only need a degree shift by one to make them fit with the conventions of the operads.

The operad $\C{P}$ we defined in~\eqref{identityoperad} can also be viewed as a symmetric operad with the trivial $S_1$ action. If we consider the PROP  $\text{cat}\C{P}$ associated to the symmetric operad  $\C{P}$, then this is equal to the category $\Bbbk[\B{S}]$ whose categorical algebra is the $\B{K}$-algebra $\Sym$. 

On the other hand, the free nonsymmetric operad generated by a single binary operation is the nonunital magmatic operad $\textbf{Mag}$  and the categorical algebra of the PRO $\text{cat}\textbf{Mag}$ associated to $\textbf{Mag}$ is the $\B{K}$-algebra $\Mag$.  In our formalism, the binary operations i.e. $\partial_j^n$'s are applied sequentially to the input leaves. This is analogous to operadic partial compositions. All of the relations between partial composition operators in our $\Mag$ come from the parallel composition laws of monoidal categories written in the nonunital nonsymmetric magmatic operad $\textbf{Mag}$. However, since we do not have the luxury of working within a monoidal category, we need to write all of these relations explicitly in the definition of $I_{\Mag}$ in our formalism.

 Permutations of the input leaves and their interactions with compositions in a PROP correspond to a specific distributive law between symmetric groups $\Sym$ and the $\B{K}$-algebra corresponding to that PROP in our formalism. In particular, $\B{K}$-algebra $\Sym \otimes_{\zeta} \Mag$ is the categorical algebra of the PROP $\text{cat}\textbf{Mag}$ associated to the nonunital symmetric magmatic operad $\textbf{Mag}$. Similarly, the $\B{K}$-algebra $\Sym \otimes_{\zeta} \Simp$ is the categorical algebra of the PROP $\text{cat}\textbf{As}$ associated to the nonunital symmetric associative operad $\textbf{As}$.

Thus, the $\B{K}$-algebras $\Mag$ and $\Simp$ fit this picture. The categorical algebras of the PROs associated to nonunital nonsymmetric operads $\textbf{Mag}$ and $\textbf{As}$ are exactly the $\B{K}$-algebras $\Mag$ and $\Simp$ that we work with in this paper. Similarly, the categorical algebras of the PROPs associated to nonunital symmetric operads $\textbf{Mag}$ and $\textbf{As}$ are exactly the $\B{K}$-algebras $\Sym \otimes_{\zeta} \Mag$ and $\Sym \otimes_{\zeta} \Simp$. We would like to reiterate emphatically that even though $\Leib$ by itself also conforms to this picture, the isomorphism between $\Leib$ and  $\Sym \otimes_{\omega} \Mag$ we furnish in Theorem~\ref{thm:MainResult} cannot.  The isomorphism cannot be lifted to an isomorphism of \emph{monoidal} categories, and therefore, cannot be operadic.

\subsection{Homological ramifications}\label{subsect:homologicalramifications}

Using the underlying $\Bbbk$-linear categories of the PROPs related to operads as we did in this paper allows one to write the (co)homology of any type of algebraic structure as an ordinary derived functor since the categorical algebra of any such $\Bbbk$-linear category is \emph{locally unital} and \emph{associative}.

Let $\C{C}$ be a $\Bbbk$-linear category parametrizing all formal $n$-to-$m$ operations of an algebraic structure, and let $A$ be an algebra over $\C{C}$.  Let $T(A)$ be the graded vector space $\bigoplus_n A^{\otimes n+1}$ considered as a right $\C{C}$-module. Then the (co)homology of $A$ simply should be
\[ H_*(A) = \Tor^{\C{C}}_*(T(A),\B{K})\qquad H^*(A) = \Ext_{\C{C}}^*(T(A),\B{K}) \]
However, this shifts the difficulty to the combinatorial side: we must now write a resolution for the point object $\B{K}$ as a $\C{C}$-module as Connes did for $\Delta C$ in~\cite{Connes:ExtFunctors} for cyclic (co)homology.

Now, we can use Theorem~\ref{thm:MainResult} together with the machinery of $\Bbbk$-linear categories to show that the mechanics of the homotopy theory of Leibniz algebras are slightly different than the mechanics of the corresponding homotopy theory of Lie algebras.

Let $CE^{\Leib}_*(L)$ be the Chevalley-Eilenberg-Leibniz complex of a Leibniz algebra $L$, and let $CE^{\rm Lie}_*(L)$ be the Chevalley-Eilenberg-Lie complex as defined in~\cite{Loday_1993}.  Let $T(L)$ be the graded vector space $\bigoplus_n L^{\otimes n+1}$ considered as a right $\Leib$-module. A corollary of our result Theorem~\ref{thm:MainResult} would be that the face maps in the expansion of Chevalley-Eilenberg-Leibniz differentials defining Leibniz homology implement the embedding of $\Simp$ in $\Leib$.

Since $\Simp\to \Leib$ is a subalgebra, we have two functors: (i) an induction functor ${\rm Ind}_{\Simp\to \Leib}$, and (ii) a restriction functor ${\rm Res}_{\Simp\to\Leib}$. Therefore
\begin{align*}
	H^\Leib_*(L) := & H_*(CE^{\Leib}_*(L)) = \Tor^{\Simp}_*({\rm Res}_{\Simp\to\Leib} T(L),\B{K})\\
	\cong & \Tor^{\Leib}_*(T(L),{\rm Ind}_{\Simp\to\Leib} \B{K})  \cong  \Tor^{\Leib}_*(T(L), \B{S})
\end{align*}
where the last isomorphism is a direct corollary of our Theorem~\ref{thm:MainResult}.

Note that there is also an epimorphism of $\Bbbk$ -linear categories of the form $\Leib\to\Lie$ which yields (i) an induction functor ${\rm Ind}_{\Leib\to\Lie}$ which sends a Leibniz algebra $L$ to its universal Lie quotient $L_\Lie$, and (ii) restriction functor ${\rm Res}_{\Leib\to \Lie}$ where we consider a Lie algebra $L$ as a Leibniz algebra. These functors implement an isomorphism in (co)homology
\begin{align*}
	H^\Lie_*(L)
	:= & \Tor^{\Lie}_*({\rm Ind}_{\Leib\to\Lie}T(L),\B{K})
	= H_*(CE^\Lie_*(L_\Lie))\\
	\cong & \Tor^{\Leib}_*(T(L),{\rm Res}_{\Leib\to \Lie}\B{K}) = \Tor^{\Leib}_*(T(L),\B{K}) 
\end{align*}
In other words, the Lie (co)homology of a Leibniz algebra $L$ comes from Leibniz homology with coefficients in the point object $\mathbb{K}$, while the Leibniz (co)homology of $L$ comes from the collection of symmetric groups $\B{S}$ as coefficients.  To formalize this, we need to investigate the relationship between the homotopy categories of Lie and Leibniz algebras similar to~\cite{KaygunKaya2024}. We plan to do this in a future paper.

\appendix \section{Operads and PRO(P)s}\label{sect:Operads}

In order to put our definitions of $\B{K}$-algebras into their proper context, we need to recall some basic facts about operads, PROs, and PROPs.  Our main references are \cite{may1997definitions, leinster2000homotopy, markl_operads_2002, Hinich_2002, loday2012algebraic}.  We assume $\C{C}$ is a (symmetric) monoidal category where monoidal product is denoted by $\odot$ and the identity object is denoted by $\B{I}$ throughout the appendix.

\subsection{Nonsymmetric operads}{~\cite[Section 5.9.3]{loday2012algebraic}}

A nonsymmetric operad $\C{P}$ in $\C{C}$ consists of collection of objects $\C{P}_n$ in $\C{C}$ for $n \geq 0$, a unit morphism $ \iota:\B{I} \to \C{P}_1$, and a collection of composition morphisms 
\[\gamma(m;n_1,\ldots , n_m)\colon \C{P}_m \odot \C{P}_{n_1} \odot \cdots \odot \C{P}_{n_m} \longrightarrow \C{P}_n
\]
for every $n\geq 1$ and $m$-composition $n_1+\cdots+n_m=n$ of $n$. These morphisms, together with the unit morphism $\iota:\B{I}\to \C{P}_1$ must satisfy unitality 
\[\begin{tikzcd}
	\C{P}_m \odot \B{I}^{\odot m} 
	\arrow[rr,"\cong"]
	\arrow[dd,"{{\C{P}}_m \odot \iota^{\odot m}}"']
	&& {\C{P}_m} 
	&& {\B{I}\odot\C{P}_m} 
	\arrow[ll,"\cong"']
	\arrow[dd,"{\iota\odot {\C{P}_m}}"]
	\\\\
	\C{P}_m\odot \C{P}_1^{\odot m} 
	\arrow[uurr,"{\gamma(m;1,\ldots,1)}"']
	&&&& 
	{\C{P}_1 \odot \C{P}_m}
	\arrow[uull,"{\gamma(1;m)}"]
\end{tikzcd}
\]
and associativity axioms. 
\[\xymatrix{
	\C{P}_m \odot \big(\bigodot_{s=1}^m \big(\C{P}_{n_s}\odot\big(\bigodot_{r=1}^{n_s} \C{P}_{\ell_{s,r}}\big)\big)\big)
	\ar[rrrr]^{\hspace{10mm}{\C{P}_m}\odot \bigodot_{s=1}^m\gamma(n_s;\ell_{s,1},\ldots,\ell_{s,n_s})}
	\ar[d]_{\text{shuffle}}
	& & & & 
	\C{P}_m \odot \C{P}_{\ell_1}\odot\cdots\odot \C{P}_{\ell_m}
	\ar[ddd]^{\gamma(m;\ell_1,\ldots,\ell_m)}
	\\
	\C{P}_m\odot \big(\bigodot_{s=1}^m \C{P}_{n_s}\big)\odot \big(\bigodot_{s=1}^m\bigodot_{r=1}^{n_s}  \C{P}_{\ell_{s,r}}\big)
	\ar[dd]_{\gamma(m;n_1,\ldots,n_m)\odot \bigodot_{s=1}^m\bigodot_{r=1}^{n_s} {\C{P}_{\ell_{s,r}}}}
	\\\\
	\C{P}_n\odot \big(\bigodot_{s=1}^m\bigodot_{r=1}^{n_s}  \C{P}_{\ell_{s,r}}\big)
	\ar[rrrr]_{\gamma(n;\ell_{1,1},\ldots,\ell_{1,n_1},\ldots,\ell_{m,1},\ldots,\ell_{m,n_m})}
	& & & &  
	\C{P}_{\ell}
}\]
where $1\leq k \leq m$, $\ell=\ell_1+\cdots+\ell_m$, $\ell_k=\ell_{k, 1}+\cdots+\ell_{k, n_k}$, and $n=n_1+\cdots+n_m$.  Note that
$\ell = \sum_{i=1}^k\sum_{j=1}^{n_i} \ell_{i,j}$ is an $n$-composition of $\ell$ because $n=n_1+\cdots+n_m$.

As for the morphisms, we will say $f\colon \C{P}\to \C{Q}$ is a morphism of operads if $f$ is a collection of morphisms $f_n\colon \C{P}_n\to \C{Q}_n$ in $\C{C}$ such that 
\[\xymatrix{
	\C{P}_m\odot \C{P}_{n_1}\odot\cdots\odot\C{P}_{n_m}
	\ar[rrr]^{\hspace{1cm}\gamma(m;n_1,\ldots,n_m)} 
	\ar[d]_{f_m\odot f_{n_1}\odot\cdots\odot f_{n_m}}
	& & &\C{P}_n 
	\ar[d]^{f_{n}}\\
	\C{Q}_m\odot \C{Q}_{n_1}\odot\cdots\odot\C{Q}_{n_m}
	\ar[rrr]_{\hspace{1cm}\gamma(m;n_1,\ldots,n_m)} 
	& & & \C{Q}_n 
}\]
commutes for every $n$ and for every $m$-composition $n_1+\cdots+n_m=n$.

We use $\OP(\C{C})$ to denote the category of operads in $\C{C}$.

\subsection{Symmetric groups as an operad}\label{subsect:Assoc}

Consider the collection of symmetric groups $S_n$ for $n\geq 1$ which we denote by $\C{S}$.  We define an operad using $\C{S}$ in the category of finite sets as follows: let $n_1+\cdots+n_m=n$ be a $m$-composition of $n$. Define the composition maps 
\[ \gamma(m;n_1,\ldots,n_m)\colon S_m\times S_{n_1}\times\cdots\times S_{n_m} \to S_n \]
by
\[ \gamma(m;n_1,\ldots,n_m)(\mu,\sigma_1,\ldots,\sigma_m)
= \sigma_{\mu^{-1}(1)}\oplus\cdots\oplus\sigma_{\mu^{-1}(m)}
\]
for every $\mu\in S_m$ and $\sigma_i\in S_{n_i}$ for $i=1,\ldots,m$ where the sum $\sigma\oplus \sigma'$ of two permutation $\sigma\in S_a$ and $\sigma'\in S_b$ is defined as 
\begin{equation}\label{eq:sumPermutations}
	(\sigma\oplus \sigma')(\ell)
	= \begin{cases} 
		\sigma(\ell) & \text{ if } \ell\leq a \\ 
		a + \sigma'(\ell-n) & \text{ if } a<\ell\leq a+b
	\end{cases}
\end{equation}
for every $a,b\in \B{N}$ and $1\leq \ell\leq a+b$. See {\cite[Example 1.5]{heuts_simplicial_2022}}.

\subsection{Symmetric operads}\label{subsect:symmetricoperads}

An operad $\C{P}$ is called \emph{symmetric} if $S_n$ has a right action on $\C{P}_n$ for all $n \in \mathbb{N}$. The action comes with two equivariance conditions ~\cite{may1997definitions}.  The first equivariance condition is given by the following diagram:
\[\begin{tikzcd}
	{\C{P}_m \odot \C{P}_{n_1} \odot \cdots \odot \C{P}_{n_m}} 
	\arrow[rrrr,"{\sigma\odot {\C{P}_{n_1}}\odot\cdots\odot {\C{P}_{n_m}}}"]
	\arrow[dd,"{\gamma(m;n_1,\ldots,n_m)}"']
	&&&&
	{\C{P}_m \odot \C{P}_{n_1} \odot \cdots \odot \C{P}_{n_m}} 
	\arrow[d,"{\C{P}_m}\odot\sigma"]\\
	&&&&
	{\C{P}_m \odot \C{P}_{n_{\sigma(1)}} \odot \cdots \odot \C{P}_{n_{\sigma(m)}}} 
	\arrow[d,"{\gamma(m;n_{\sigma(1)},\ldots,n_{\sigma(m)})}"]\\
	{\C{P}_n} 
	\arrow[rrrr,"{\sigma(n_1,\ldots,n_m)}"']
	&&&&
	{\C{P}_n}
\end{tikzcd}
\]
for $\sigma \in S_m$, and for any composition $n_1+\cdots+n_m=n$.  The permutation $\sigma(n_1,\ldots,n_m) \in S_{n_1+\cdots+n_m}$ permutes the blocks $(1,\ldots, n_1),\ldots,(n_{m-1}+1,\ldots,n_m)$ as $\sigma$ permutes $\{1,\ldots,m\}$ with a left action.  The second equivariance condition is as follows: 
\[
\begin{tikzcd}
	{\C{P}_m \odot \C{P}_{n_1} \odot \cdots \odot \C{P}_{n_m}} 
	\arrow[rrr,"{{\C{P}_m}\odot\sigma_1 \odot\cdots \odot \sigma_m}"]
	\arrow[dd,"{\gamma(m;n_1,\ldots,n_m)}"']
	&&& 
	{\C{P}_m \odot \C{P}_{n_1} \odot \cdots \odot \C{P}_{n_m}} 
	\arrow[dd,"{\gamma(m;n_1,\ldots,n_m)}"]
	\\\\
	{\C{P}_n} 
	\arrow[rrr,"{\sigma_1 \oplus \cdots \oplus \sigma_m}"']
	&&& 
	{\C{P}_n}
\end{tikzcd}
\]
for given $\sigma_i \in S_{n_i}$ for $1\leq i \leq m$ and their block sum $\sigma_1 \oplus \cdots \oplus \sigma_m \in S_{n_1+\cdots+n_m}$.  Because of these two equivariance conditions, the $\gamma$ maps are defined as the sum of the following morphisms
\[\gamma(m;n_1,\ldots,n_m)\colon
\C{P}_m \odot_{S_m}
\operatorname{Ind}_{S_{n_1} \times \cdots \times S_{n_m}}^{S_n}
\left(\C{P}_{n_1} \odot\cdots\odot \C{P}_{n_m}\right) \longrightarrow \C{P}_n
\]
over all compositions of the form $n_1+\cdots+n_m=n$.  Tensor product over $S_n$ comes from the first equivariance condition, while the induced module comes from the second.

\subsection{The endomorphism operad}

Let $A$ be an object in $\C{C}$. There is a canonical operad associated with $A$ which we denote by $\C{O}(A)$ where
\[ \C{O}_n(A) = {\C{C}}(A^{\odot n},A) \]
The operadic composition law is defined as follows: let $n_1+\cdots+n_m=n$ be an $m$-composition of $n$ and define
\[ \C{O}_{m}(A)\odot\C{O}_{n_1}(A)\odot\cdots\odot\C{O}_{n_m}(A) \to \C{O}_n(A) \]
via
\[ \xymatrix{
	{\C{C}}(A^{\odot m},A)\otimes {\C{C}}(A^{\odot n_1},A)\otimes\cdots\otimes{\C{C}}(A^{\odot n_m},A)
	\ar[d]^{\C{C}(A^{\odot m},A)\otimes\odot}\\
	{\C{C}}(A^{\odot m},A)\otimes {\C{C}}(A^{\odot n}, A^{\odot m})
	\ar[d]^{\circ}\\
	{\C{C}}(A^{\odot n},A)
}\]
The operad $\C{O}(A)$ is called the \emph{endomorphism operad} associated with $A$. Note that if we assume $\C{C}$ is symmetric (resp. braided) monoidal, then the endomorphism operad is naturally symmetric (resp. braided).

\subsection{Algebras over operads}

Let $\C{P}$ be a operad in $\C{C}$. We call an object $A\in\C{C}$ as a $\C{P}$-algebra, if there is a morphism of operads of the form $\lambda_A\colon \C{P}\to \C{O}(A)$.  Given two $\C{P}$-algebras $A$ and $B$, a morphism $f\colon A\to B$ is called a morphism of $\C{P}$-algebras if the following diagram
\begin{equation}\label{eq:OperadAlgebra}
	\xymatrix{
		A^{\odot n} 
		\ar[d]_{f^{\odot n}}
		\ar[rr]^{\lambda_A(\alpha)} 
		& &  \ar[d]^{f} A\\
		B^{\odot n} 
		\ar[rr]_{\lambda_B(\alpha)} 
		& &  B
	}
\end{equation}
commutes for all $n\geq 1$ and $\alpha\in \C{P}_n$.  Algebras over $\C{P}$ together with $\C{P}$-algebra morphisms makes a category denoted by $\Alg_{\C{C}}(\C{P})$.

\subsection{PROs}

Our main references for this Section are \cite[Definition 2.2.2]{Hinich_2002} and ~\cite[Chapter 5]{MR0171826}.  

A PRO (\emph{PRoduct Operations}) in $\C{C}$ is a strict monoidal category $\C{P}$ enriched in $\C{C}$ with the objects $[n]$ for $n \in \B{N}$ where morphisms between objects are objects in $\C{C}$. $\C{P}$ is equipped with a monoidal product $\oplus$ given on the objects by the sum of natural numbers.

PROs can be thought of as categories that model algebraic operations with multiple inputs and multiple outputs, whereas operads model multiple inputs but one output.  Thus if $\C{P}$ is a PRO(P) in $\C{C}$, then $res\C{P}_n :=\C{P}(n,1)$ is an operad in $\C{C}$.

\subsection{Symmetric groups as a PRO}

Collection of symmetric groups $\B{S}=\bigsqcup_{n\geq 1} S_n$ forms a monoidal category as follows: $Ob(\B{S})=\B{N}\setminus\{0\}$ and 
\[ \B{S}(n,m) = 
\begin{cases}
	S_n & \text{ if } n=m\\
	\emptyset & \text{ otherwise}
\end{cases}
\]
The monoidal product $\oplus$ on objects is just addition of natural numbers. The monoidal product on permutations is defined in Equation~\eqref{eq:sumPermutations}.

Note that $\B{S}$ is a strict symmetric monoidal category where the switch is defined as
\[ \xymatrix{
	n \oplus m 
	\ar[d]_{\tau_{n,m}}
	\ar[rr]^{\sigma\oplus\sigma'} 
	& & n \oplus m 
	\ar[d]^{\tau_{n,m}}\\
	m\oplus n 
	\ar[rr]_{\sigma'\oplus\sigma} 
	& & m\oplus n
}\]
using the permutation $\tau_{n,m}$
\begin{equation*}\label{swap}
	\tau_{n,m}(i)
	= \begin{cases} 
		\ell+m & \text{ if } \ell\leq n \\ 
		\ell-n & \text{ if } n<\ell\leq n+m 
	\end{cases}
\end{equation*}
for every $n,m\in \B{N}$ and $1\leq \ell\leq n+m$.

\subsection{The PRO associated to a nonsymmetric operad}

Our main references for this Section are \cite[Section 4.1]{may1978uniqueness}, \cite{leinster2000homotopy}, \cite[Section 2.2.6]{Hinich_2002}, and \cite[Section 3]{BASTERRA2018130}.

Assume $\C{O}$ is an operad over $\C{C}$, and let us define a PRO $\text{cat}{\C{O}}$.  Since we are defining a PRO, the objects of the category $cat{\C{O}}$ are $[n]$ for any $n\in\B{N}$, and the monoidal product on the objects is given by addition of natural numbers.  The morphisms in $\text{cat}\C{O}$ are given by
\begin{equation}\label{eq:PROofOp}
	\text{cat}\C{O}([n],[m]) 
	= \bigoplus_{n_1+\cdots+n_m=n} \C{O}_{n_1}\odot\cdots\odot \C{O}_{n_m} 
\end{equation}
Here the sum in Equation~\eqref{eq:PROofOp} is taken over all $m$-compositions of $n$. Since we implicitly assume $\C{O}_0=0$, one can take the sum over all order preserving surjections of the form $f\colon [n]\to [m]$ where each $n_i = |f^{-1}(i)|$.  Thus a (homogeneous) morphism $f\in cat\C{O}([n],[m])$ is of the form $(f,b)$ where $f\colon [n]\to [m]$ is an order preserving surjection, and $b =(b_1\odot\cdots\odot b_m)\in\C{O}_{n_1}\odot\cdots\odot\C{O}_{n_m}$ where $n_i = |f^{-1}(i)|$ for $i=1,\ldots,m$.  For two homogeneous morphisms $(g,b)\colon [m]\to[\ell]$ and $(f,a)\colon [n]\to[m]$ in $cat\C{O}$, their composition $g\circ f\colon [n]\to [\ell]$ is defined via
\begin{equation} \label{eq:PropComposition}
	(g,b)\circ(f,a)
	= (g\circ f, 
	\gamma(b_1,\odot_{g(j_1)=1} a_{j_1})
	\odot\cdots\odot
	\gamma(b_\ell,\odot_{g(j_\ell)=\ell} a_{j_\ell}))
\end{equation} 
using the composition in the operad $\C{O}$. The monoidal product of morphisms is taken in the monoidal category $(\C{C},\odot)$ since morphisms are defined in this category.

\subsection{The PROP associated to a symmetric operad}
In order to extend the PRO $cat\C{O}$ to a PROP, we need to add a set of symmetric group actions. Thus we write

\begin{equation}\label{eq:PROPofAnOperad}
	\text{cat}\C{O}([n],[m]) 
	= \bigoplus_{n_1+\cdots+n_m=n} \left( \C{O}_{n_1}\odot\cdots\odot \C{O}_{n_m} \right)\otimes_{S_{n_1} \times \cdots \times S_{n_m}} S_n
\end{equation}

We denote a morphism $(f,\textbf{a},\sigma) \in 	\text{cat}\C{O}([n],[m]) $ where $\sigma \in S_n$ and $f\colon [n]\to [m]$ is an order preserving surjection and $\textbf{a} =(a_1\odot\cdots\odot a_m)\in\C{O}_{n_1}\odot \cdots\odot \C{O}_{n_m}$ where $n_i = |f^{-1}(i)|$ for $i=1,\ldots,m$.

For two morphisms $(f,\textbf{a},\sigma) \colon [n]\to[m]$ and $(g,\textbf{b},\tau) \colon [m]\to[\ell]$ in $cat\C{O}$, their composition $(g,\textbf{b},\tau)\circ(f,\textbf{a},\sigma) \colon [n]\to [\ell]$ is explicitly defined as follows:

$$
\begin{tikzpicture}[baseline= (a).base]
	\node[scale=.95] (a) at (0,0){
		\begin{tikzcd}[column sep=tiny]
			{
				(g\circ f, 
				\gamma(b_1,a_{\tau^{-1}(1)},\cdots, a_{\tau^{-1}(m_1)})
				\odot\cdots\odot
				\gamma(b_\ell,a_{\tau^{-1}(m_1+\cdots+m_{\ell -1}+1)},\cdots, a_{\tau^{-1}(m_1+\cdots+m_{\ell})}),\tau(n_1,\ldots,n_m)\circ \sigma)
			}
		\end{tikzcd} 
	};
\end{tikzpicture}$$

where $m_i=|g^{-1}(i)|$ and $m_1+\cdots+m_{\ell}=m$ and $\tau(n_1,\ldots,n_m) \in S_{n_1+\cdots+n_m}=S_n$ is the block permutation as it is defined on section~\ref{subsect:symmetricoperads}.

The right action of $S_m$ comes from the symmetric monoidal structure on $\C{C}$ permuting the terms $\C{O}_{n_1}\odot\cdots\odot \C{O}_{n_m}$, but also accordingly  changes the symmetric group element in $S_n$ that was in the label of the morphism. The left $S_n$ action now comes from the induced action from $S_{n_1}\times\cdots\times S_{n_m}$ to $S_n$. A similar account on this symmetric structure is given in ~\cite[Section 3]{BASTERRA2018130}. Note that the category we define is the opposite of the category in that reference.

\subsection{Variations in the literature}

The prototypical example of the PROP associated with an operad is Segal's category $\Gamma$~\cite{Segal:CategoriesCohomologyTheories} where $\Gamma$ appears as the PROP for the commutative operad in the category of based pointed sets with smash product as the monoidal product. Our definition of the PROP associated with an operad $\C{O}$ comes from~\cite[Section 3]{BASTERRA2018130}.  In~\cite[Section 4.1]{may1978uniqueness} May defines the PROP associated with an operad with different Hom objects, and in ~\cite[Section 10.2]{may2020operads} it is shown that these Hom objects are isomorphic to the Hom objects
\begin{equation}
	\text{cat}\C{O}([n],[m]) 
	= \bigoplus_{f:[n]\to [m]} \C{O}_{f^{-1}(1)}\odot\cdots\odot \C{O}_{f^{-1}(m)} 
\end{equation}
where the index runs through all surjective set functions.  Leinster~\cite[pp.21]{leinster2000homotopy} uses the same Hom objects, while Adams~\cite[pp.42]{adams1978infinite} and Markl~\cite[Example 60]{MARKL200887} use the definition of the Hom objects given in Equation~\eqref{eq:PROPofAnOperad}.

Note that May's description of the Hom objects is equivalent to writing the sum over compositions as in Equation~\eqref{eq:PROPofAnOperad}.  This is because for every surjective function $f\colon [n]\to [m]$, there is a unique order preserving map $\tilde{f}\colon [n]\to [m]$ and a (not necessarily unique) permutation $\sigma\in S_n$ such that $\tilde{f} = f\circ \sigma$.  The permutation determines a unique right coset $(S_{n_1}\times\cdots\times S_{n_m})\sigma$ of the subgroup $S_{n_1}\times\cdots\times S_{n_m}$ of $S_n$ coming from block sum of permutations. Thus May's PROP comes with a canonical symmetric structure which is the same as ours.

For two homogeneous morphisms $(g,b)\colon [m]\to[\ell]$ and $(f,a)\colon [n]\to[m]$ in $cat\C{O}$ May defines the composition as  
\begin{equation}
	(g \circ f, \gamma(b_1; \odot_{g(k)=1} a_k)\cdot \sigma_1, \cdots , \gamma(b_i; \odot_{g(k)=i} a_k)\cdot \sigma_i, \cdots , \gamma(b_m; \odot_{g(k)=m}a_k)\cdot \sigma_m)
\end{equation} 
where $\sigma_i \in S_{(g\circ f)^{-1}(i)}$ are the permutations that makes $(g \circ f)(\sigma_1\oplus\cdots\oplus\sigma_m)$ order preserving. Again, this is equivalent to our composition law in the symmetric case.  Adams~\cite[pp.42]{adams1978infinite} and Markl~\cite[Example 60]{MARKL200887} do not explicitly define the compositions for their PROPs. Leinster~\cite[pp.22]{leinster2000homotopy} claims he uses ``closely related but slightly different'' composition compared to May~\cite{may1978uniqueness}, but without an explicit description.

\subsection{The PRO(P) associated to an operad is as good as the operad itself}

We will call a PRO $\C{P}$ as \emph{reducible} if $\C{P} = cat (res\C{P})$, i.e.
$$ \C{P}([n],[m]) =  \bigoplus_{n_1+\cdots+n_m=m} \C{P}([n_1],[1])\odot\cdots\odot\C{P}([n_m],[1]) $$
for every $n$ and $m$.

\begin{proposition}[{\cite[Section 2.2.6]{Hinich_2002} and~\cite[Proposition 3.1]{BASTERRA2018130}}]
	The functors
	\[ \xymatrix{
		\text{cat}\colon\OP(\C{C})\ar@/^2ex/[rr] & & \ar@/^2ex/[ll] \PRO(\C{C})\colon\text{res}
	}\]
	are an adjoint pair. Moreover, they induce an equivalence between $\OP(\C{C})$ and $red\PRO(\C{C})$.
\end{proposition}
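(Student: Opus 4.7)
The plan is to exhibit the adjunction unit and counit directly, then check that both are isomorphisms on the respective full subcategories to get the equivalence. First I would establish that the composite $\operatorname{res}\circ\operatorname{cat}$ is literally the identity on $\OP(\C{C})$: given an operad $\C{O}$, by definition
\[
\operatorname{res}(\operatorname{cat}\C{O})_n = \operatorname{cat}\C{O}([n],[1]) = \bigoplus_{n_1+\cdots+n_1 = n} \C{O}_{n_1} = \C{O}_n,
\]
since the only $1$-composition of $n$ is $n$ itself. This identification is visibly compatible with the unit and the operadic composition $\gamma$ (the latter being exactly the $m=1$ case of the composition law~\eqref{eq:PropComposition} in $\operatorname{cat}\C{O}$), so the unit $\eta_{\C{O}}\colon \C{O}\to \operatorname{res}(\operatorname{cat}\C{O})$ is an isomorphism of operads.

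Next, I would build the counit $\varepsilon_{\C{P}}\colon \operatorname{cat}(\operatorname{res}\C{P})\to\C{P}$. On objects both PROs have the same objects $[n]$. On morphisms,
\[
\operatorname{cat}(\operatorname{res}\C{P})([n],[m])
= \bigoplus_{n_1+\cdots+n_m=n}
\C{P}([n_1],[1])\odot\cdots\odot\C{P}([n_m],[1])
\]
and the monoidal structure of $\C{P}$ furnishes a canonical map into $\C{P}([n],[m])$ by tensoring factors together using $\oplus$. I would verify that this is a monoidal $\C{C}$-enriched functor by unpacking formula~\eqref{eq:PropComposition}: the formula is forced on us once we require $\varepsilon$ to be the identity on generators $\C{P}([n],[1])$ and to commute with $\oplus$, so compatibility with composition in $\operatorname{cat}(\operatorname{res}\C{P})$ reduces to the associativity and interchange axioms already present in $\C{P}$. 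The triangle identities $\operatorname{res}\varepsilon\circ \eta\operatorname{res} = 1$ and $\varepsilon\operatorname{cat}\circ \operatorname{cat}\eta = 1$ then hold on generators by construction and extend to all morphisms by monoidality. This establishes $\operatorname{cat}\dashv \operatorname{res}$ (and in the symmetric case I would carry the $S_n$-equivariance through the same argument, using the identification~\eqref{eq:PROPofAnOperad}).

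With the adjunction in hand, the equivalence is cheap. By construction the counit $\varepsilon_{\C{P}}$ at $([n],[m])$ is precisely the canonical comparison
\[
\bigoplus_{n_1+\cdots+n_m=n} \C{P}([n_1],[1])\odot\cdots\odot\C{P}([n_m],[1])\longrightarrow \C{P}([n],[m]),
\]
and by definition $\C{P}$ is reducible iff this map is an isomorphism for all $n,m$. Thus $\varepsilon$ is a natural isomorphism exactly on the full subcategory $red\PRO(\C{C})$. Combined with the fact that $\eta$ is always an isomorphism on $\OP(\C{C})$, standard abstract nonsense gives the restricted equivalence $\OP(\C{C})\simeq red\PRO(\C{C})$, with $\operatorname{cat}$ landing inside $red\PRO(\C{C})$ (since $\operatorname{res}\operatorname{cat}\cong \mathrm{id}$ forces $\operatorname{cat}\operatorname{res}\operatorname{cat}\cong \operatorname{cat}$, hence $\operatorname{cat}\C{O}$ is reducible).

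The main obstacle I anticipate is bookkeeping rather than conceptual: showing that the map $\varepsilon_{\C{P}}$ really is a well-defined $\C{C}$-enriched monoidal functor, and in the symmetric setting that it respects the $S_m\times(S_{n_1}\times\cdots\times S_{n_m})$-biequivariance used to pass from the direct-sum indexing over compositions to the induced-module form of~\eqref{eq:PROPofAnOperad}. In particular, one must check that the block-permutation component in~\eqref{eq:PropComposition} (i.e.\ the term $\tau(n_1,\ldots,n_m)$) matches the symmetry constraint in $\C{P}$, which is exactly the first equivariance axiom of Section~\ref{subsect:symmetricoperads}; once this is pinned down on generators, the rest follows by monoidal extension.
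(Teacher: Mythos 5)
Your proposal is correct and rests on the same two facts the paper's proof uses, namely that $\operatorname{res}\circ\operatorname{cat}$ is the identity on operads and that the canonical comparison $\bigoplus_{n_1+\cdots+n_m=n}\C{P}([n_1],[1])\odot\cdots\odot\C{P}([n_m],[1])\to\C{P}([n],[m])$ is an isomorphism exactly when $\C{P}$ is reducible; you merely package the argument as a unit--counit adjunction where the paper states it as a hom-set bijection ($\PRO(\text{cat}\,\C{O},\C{P})\cong\OP(\C{O},\operatorname{res}\C{P})$, a monoidal functor out of $\text{cat}\,\C{O}$ being determined by its components $\C{O}_n\to\C{P}([n],[1])$). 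If anything, your version is slightly more explicit about verifying that the counit is a well-defined monoidal functor, a point the paper leaves implicit.
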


\begin{proof}
	We must show $\PRO(\text{cat}\C{O},\C{P}) \cong \OP(\C{O}, res\C{P})$ for every operad $\C{O}$ in $\C{C}$.  We first note that morphisms of PROs are functors which are identity on the set of objects. Moreover, since functors in $\PRO(cat\C{O},\C{P})$ are monoidal and morphisms in $cat\C{O}$ are obtained from $\C{O}$, any functor of the form $F\colon cat\C{O}\to \C{P}$ on morphisms is determined by their image $\C{O}_n\to \C{P}([n],[1])$. Thus we have shown that the monoidal category associated with an operad is a (reducible) PRO, and $\PRO(\text{cat}\C{O},\C{P}) \cong \OP(\C{O}, res\C{P})$.  On the opposite side, if we only consider the morphisms from $[n]$ to $[1] $ in a PRO, $res\C{P}_n :=\C{P}([n],[1])$ is an operad. When $\C{P}$ is reducible, every morphism $[m]\to [n]$ in $\C{P}$ can be written as a sum of monoidal products of morphisms of the form $[m_1]\to [1],\ldots,[m_n]\to [1]$ where $m=m_1 + \cdots + m_n$. In such cases, the (monoidal) category associated to $res\C{P}$ is $\C{P}$ itself since $\C{P}$ is assumed to be reducible.
\end{proof}

\begin{proposition}[{\cite[Theorem 1.6.1]{leinster2000homotopy}}] \label{prop:correspondence} 
	Let $\C{O}$ be an operad in $\C{C}$, and let $\Alg_{\C{C}}(\C{O})$ be the category of $\C{O}$-algebras in $\C{C}$.  Then there is an equivalence of categories between $\Alg_{\C{C}}(\C{O})$ and the category of monoidal functors $\textbf{Mon}(cat\C{O},\C{C})$ from $cat\C{O}$ to $\C{C}$. The same equivalence can be written for a symmetric (resp. braided) operads and a symmetric (resp. braided) monoidal functors.
\end{proposition}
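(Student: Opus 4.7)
The plan is to construct a pair of functors $\Phi\colon \Alg_{\C{C}}(\C{O})\to \textbf{Mon}(cat\C{O},\C{C})$ and $\Psi\colon \textbf{Mon}(cat\C{O},\C{C})\to \Alg_{\C{C}}(\C{O})$ and then verify that they are mutually quasi-inverse. Because $cat\C{O}$ is a strict monoidal category with objects $[n]$ for $n\in\B{N}$, the monoidal product being addition, every strict monoidal functor $F\colon cat\C{O}\to\C{C}$ is determined on objects by $F([1])$, since $F([n])\cong F([1])^{\odot n}$, and on morphisms by its values on $cat\C{O}([n],[1]) = \C{O}_n$. So the backward direction $\Psi$ essentially sends $F$ to $A:=F([1])$ together with the action $\lambda_A\colon \C{O}\to \C{O}(A)$ defined degreewise by the restriction $F_{[n],[1]}\colon \C{O}_n\to\C{C}(A^{\odot n},A)$.

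For the forward direction $\Phi$, given a $\C{P}$-algebra $(A,\lambda_A)$ I would define $\Phi(A)=F_A$ by $F_A([n])=A^{\odot n}$ on objects, and on each summand $\C{O}_{n_1}\odot\cdots\odot\C{O}_{n_m}$ of $cat\C{O}([n],[m])$ as in Equation~\eqref{eq:PROofOp}, by the composite
\[
\C{O}_{n_1}\odot\cdots\odot\C{O}_{n_m}
\xrightarrow{\lambda_A^{\odot m}}
\C{C}(A^{\odot n_1},A)\odot\cdots\odot\C{C}(A^{\odot n_m},A)
\xrightarrow{\odot}
\C{C}(A^{\odot n},A^{\odot m}),
\]
which is functorial and monoidal by construction. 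The key calculation is that this respects composition: using Equation~\eqref{eq:PropComposition} for the composition in $cat\C{O}$, compatibility of $F_A$ with composition boils down exactly to the statement that $\lambda_A\colon \C{O}\to\C{O}(A)$ is an operad morphism, i.e. that it commutes with the structural $\gamma$ maps.

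To show $\Phi$ and $\Psi$ form an equivalence, I would observe that $\Psi\Phi(A)=A$ on the nose (the restriction of $F_A$ to $cat\C{O}([n],[1])=\C{O}_n$ is $\lambda_A$ by definition), and that $\Phi\Psi(F)\cong F$ canonically via the monoidal coherence isomorphisms $F([n])\cong F([1])^{\odot n}$. Morphisms in both categories match: a morphism $f\colon A\to B$ of $\C{O}$-algebras is precisely the data of a natural transformation $F_A\Rightarrow F_B$ whose $[1]$-component is $f$ and whose $[n]$-component is $f^{\odot n}$, with naturality on each $\alpha\in \C{O}_n$ being Diagram~\eqref{eq:OperadAlgebra}.

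The main obstacle, and the one I would treat most carefully, is verifying the composition axiom for $F_A$, since one must match the nested operadic compositions $\gamma(b_i;\odot_{g(j)=i}a_j)$ appearing in Equation~\eqref{eq:PropComposition} with iterated $\odot$-composition in $\C{C}(A^{\odot n},A^{\odot \ell})$; this uses precisely the associativity axiom in the definition of an operad together with associativity of $\odot$. For the symmetric (resp.\ braided) refinement, one extends the Hom objects as in Equation~\eqref{eq:PROPofAnOperad}, replaces $\Phi$ by the same prescription together with the symmetric (resp.\ braided) structure morphisms of $\C{C}$ acting on $A^{\odot n}$, and checks that the two equivariance conditions in Section~\ref{subsect:symmetricoperads} are exactly what is needed for $F_A$ to be symmetric (resp.\ braided) monoidal; the backward direction is then straightforward because any symmetric monoidal functor $F$ on $cat\C{O}$ automatically makes $\lambda_A$ equivariant.
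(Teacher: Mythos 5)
Your proposal is correct and follows essentially the same route as the paper's own proof: both directions are built by sending $A$ to the strict monoidal functor determined by $[n]\mapsto A^{\odot n}$ and $\C{O}_n = cat\C{O}([n],[1])\ni\alpha\mapsto\lambda_A(\alpha)$, and conversely by restricting a monoidal functor to the generating morphisms $[n]\to[1]$, with the composition axiom reducing to $\lambda_A$ being an operad morphism and naturality of algebra maps reducing to Diagram~\eqref{eq:OperadAlgebra}. The only cosmetic difference is that you allow non-strict monoidal functors and invoke coherence isomorphisms for $\Phi\Psi(F)\cong F$, whereas the paper works with strict monoidal functors so that the two constructions are inverse on the nose.
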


\begin{proof}
	Let $A \in \C{C}$ be a $\C{O}$-algebra with the structure morphisms $\lambda^n_A\colon \C{O}_n\to \C{C}(A^{\odot n},A)$.  We define (symmetric, resp. braided) monoidal functor $\phi_A\colon cat{\C{O}} \to \C{C}$ by letting  $\phi_A([n])=A^{\odot n}$ on the set of objects.  Note that since $\C{C}$ is a strict (symmetric, resp. braided) monoidal category, we get $A^{\odot (n+m)} = A^{\odot n} \odot A^{\odot m}$ and we get a strict (symmetric, resp. braided) monoidal functor.  Since we have $cat{\C{O}}([n],[1])= \C{O}_n$, a  morphism  $f\colon [n]\to[1]$ in $cat{\C{O}} $ corresponds to an element $f \in \C{O}_n$ and  we have $\phi_A(f)=\lambda_A^n(f)\colon A^{\odot n}\to A$. This  extends to all morphisms in $cat{\C{O}}$ because  every morphism $[n]\to [m]$ in $cat{\C{O}}$ can be written as a sum of finite monoidal products of morphisms of the form $[n_i]\to [1]$ determined by $m$-compositions $n_1+\cdots+n_m$ of $n$. 
	
	Now, if $f\colon A\to B$ is a morphism of $\C{O}$-algebras, we must show that there is a natural transformation of the form $\phi_{f}\colon \phi_A\to \phi_B$ making diagrams of the form
	\[\xymatrix{
		A^{\odot n} 
		\ar[rr]^{\phi_A(\beta)} 
		\ar[d]_{f^{\odot n}}
		& & A^{\odot m}
		\ar[d]^{f^{\odot m}}\\
		B^{\odot n}
		\ar[rr]_{\phi_B(\beta)}
		& & B^{\odot m}
	}\]
	commutative for all $\beta\in cat\C{O}$. However, since the morphisms in $cat\C{O}$ is generated by morphisms of the form $[\ell]\to [1]$ via the monoidal product, in order to verify the natural transformation conditions, it is enough to consider diagrams of the form~\eqref{eq:OperadAlgebra} which all commute since we consider $\C{O}$-algebras. If we assume $\C{O}$ is symmetric (resp. braided) the category $cat\C{O}$ is symmetric (resp. braided).  This finishes the one side of the correspondence.
	
	On the other hand, assume we have a strict (symmetric, resp. braided) monoidal functor of the form $\phi\colon cat{\C{O}} \to \C{C}$. Then we have $\phi([n]) = A^{\odot n}$ where $A=\phi([1])$ for each $n\in \mathbb{N}$. Also note that since $\phi$ is a strict (symmetric, resp. braided) functor, we have ($S_n$-equivariant, resp. $B_n$-equivariant) maps $\phi_{n,1}\colon \C{O}_n \to \C{C}(A^{\odot n},A)$ since $cat{\C{O}}([n],[1]):=\C{O}_n$. The definition of a monoidal functor, together with the definition of composition of morphisms in $ cat{\C{O}}$ ensures that the collection $(\phi_{\cdot,[1]})_{n\in\B{N}}$ indeed defines a morphism of operads.  
	
	It is easy to see that these two constructions are mutual inverses on the set of objects, and morphisms.
\end{proof}

\subsection{A non-linear example}

Consider the skeletal category of finite sets with all set maps and with the Cartesian product as a strict symmetric monoidal category. The unit object is the set $[1]$. There is a unique operad $\Comm $ in this category which is defined as $\Comm _n = [1]$ for every $n\geq 1$. 

Let us first consider the \emph{non-unital} version where $\Comm _0 = \emptyset$.  The PROP associated with $\Comm $ then is the skeletal category of finite sets with surjections since
\begin{align*}
	\bigsqcup_{n_1+\cdots+n_m=n} & \left(\Comm _{n_1}\times\cdots\times\Comm _{n_m}\right)\times_{S_{n_1}\times\cdots\times S_{n_m}}\times S_n\\
	= & \bigsqcup_{n_1+\cdots+n_m=n}\underbrace{\left([1]\times\cdots\times [1]\right)}_{\text{$m$-times}}\times_{S_{n_1}\times\cdots\times S_{n_m}}\times S_n\\
	= & \bigsqcup_{n_1+\cdots+n_m=n} \left(S_{n_1}\times\cdots\times S_{n_m}\right)\backslash S_n
\end{align*}
and because surjective set maps of the form $f\colon [n]\to [m]$ with $n_i = f^{-1}(i)$ are in bijective correspondence with the set of right cosets of $S_{n_1}\times\cdots\times S_{n_m}$ in $S_n$ since both sets have the same size $\frac{n!}{n_1!n_2!\cdots n_m!}$ where $n=n_1+\cdots+n_m$ for each $n_i>0$.

On the other hand, if we forgo the symmetric structure, we get the opposite category of the skeletal category of well-ordered finite sets with order preserving surjections since
\begin{align*}
	\bigsqcup_{n_1+\cdots+n_m=n}\Comm _{n_1}\times\cdots\times\Comm _{n_m}
	= & \bigsqcup_{n_1+\cdots+n_m=n} [1]
\end{align*}
and because the set of $m$-compositions of $n$ are in bijective correspondence with order preserving surjective maps of the form $f\colon [n]\to [m]$ where the bijection from functions to compositions is given by $f \mapsto (f^{-1}(1),\ldots,f^{-1}(m))$.  Note that since we assume each $|f^{-1}(i)| = n_i>0$, we have surjections.

Now, let us consider the \emph{unital} version where $\Comm _0 = [1]$, and where we allow $n_i=0$ in a composition of an integer in the indices we use in the unions. This means we now consider all maps $f\colon [n]\to [m]$ since we now allow $f^{-1}(i)$ to be empty for some $i\in [m]$. Then for the symmetric case we get the skeletal category of finite sets with all maps, and for the non-symmetric case we get the skeletal category of well-ordered finite sets with order preserving maps. The former is an unbased analogue of the opposite of Segal's category $\Gamma$ while the latter is the simplex category $\Delta$. 

In terms of parametrizing categories, the symmetric version of the non-unital $\Comm$ parametrizes commutative semi-groups while the non-symmetric version parametrizes all semi-groups. The unital version of the symmetric $\Comm$ then parametrizes commutative monoids while non-symmetric unital $\Comm$ parametrizes all monoids.

On the other hand, if we use the operad we defined Section~\ref{subsect:Assoc} and let $\Assoc_n = S_n$ for $n\geq 1$ the symmetric PROP associated with this operad is
\[ \bigsqcup_{n_1+\cdots+n_m=n} (S_{n_1}\times\cdots\times S_{n_m})\times_{S_{n_1}\times\cdots\times S_{n_m}} S_n\]
In the unital case we get the crossed simplicial group $\Delta \B{S}$~\cite{Fiedorowicz_1991}, while in the non-unital case we get $\Delta^+ \B{S}$ the subcategory of epimorphisms of $\Delta \B{S}$.  These categories also model semigroups and monoids, respectively.

As another variation, instead of the symmetric groups, we could have easily used the braid groups $\B{B} = \bigsqcup_{n\geq 1} B_n$, and we would have obtained crossed simplicial groups $\Delta\B{B}$ and $\Delta^+\B{B}$ for braided monoids and braided semigroups, respectively.

\bibliographystyle{siam}
\bibliography{phd_proposal}
	
\end{document}